\theoremstyle{plain}
\newtheorem{theorem}{Theorem}[section]
\newtheorem{assumption}{Assumption}
\newtheorem{lemma}{Lemma}[section]
\newtheorem{prop}{Proposition}[section]
\newtheorem{cor}{Corollary}[section]
\theoremstyle{definition}
\newtheorem{defin}{Definition}[section]
\newtheorem{remark}{Remark}[section]
\newtheorem*{acknowledgement}{Acknowledgements}
\newcommand{\mf}[1]{\displaystyle{\mathfrak{#1}}}
\newcommand{\comment}[1]{}
\DeclareMathOperator{\spec}{\ensuremath{Spec}}
\DeclareMathOperator{\Gr}{\ensuremath{gr}}
\DeclareMathOperator{\ad}{\ensuremath{ad}}
\DeclareMathOperator{\Id}{\ensuremath{Id}}
\DeclareMathOperator{\Sym}{\ensuremath{Sym}}
\DeclareMathOperator{\End}{\ensuremath{End}}
\DeclareMathOperator{\rank}{\ensuremath{rank}}
\DeclareMathOperator{\Aut}{\ensuremath{Aut}}
\DeclareMathOperator{\Pic}{\ensuremath{Pic}}
\DeclareMathOperator{\Ad}{\ensuremath{Ad}}
\DeclareMathOperator{\Out}{\ensuremath{Out}}
\begin{document}
\title{Rigidity  of quantum algebras}
\author{Akaki Tikaradze}
\email{Akaki.Tikaradze@utoledo.edu}
\address{University of Toledo, Department of Mathematics \& Statistics, 
Toledo, OH 43606, USA}

\subjclass[2020]{16S30  (primary), 16S32 (secondary)}

\begin{abstract}

Given an associative $\mathbb{C}$-algebra $A$, we call $A$ strongly
rigid if for any pair of
finite subgroups of its automorphism groups $G, H,$ such that
$A^G\cong A^H$, then $G$ and $H$ must be isomorphic. In this paper, we show that a large class of filtered quantizations
are strongly rigid.
We also solve the inverse Galois problem
for a wide class of rational Cherednik algebras that includes all (simple) classical generalized Weyl algebras, and also for quantum tori.
 Finally, we show that the Picard group of an $n$-dimensional quantum torus is isomorphic to the group of its outer automorphisms.

\end{abstract}

\maketitle

\section{Introduction}

It was shown by Alev and Polo \cite{AP} that if an associative $\mathbb{C}$-algebra $A$ is either an enveloping
algebra of a semi-simple Lie algebra or the $n$-th Weyl algebra, then for any nontrivial finite subgroup $\Gamma$
of automorphisms of $A$,  the fixed ring $A^{\Gamma}$ is not isomorphic to $A.$ Such a property of an algebra
is referred to as its rigidity.
On the other hand, it was proved by Alev, Hodges and Velves \cite{AHV} that given a pair of finite subgroups of automorphisms $G, H$ of the first Weyl algebra $A_1(\mathbb{C}),$
such that corresponding fixed rings are isomorphic $A_1(\mathbb{C})^G\cong A_1(\mathbb{C})^H,$ then $G\cong H.$
In \cite{T3} we generalized the above mentioned results by showing that
if $W, W'$ are finite subgroups of automorphisms of $A$ ($A$ is still either the Weyl algebra or an enveloping algebra
of a semi-simple Lie algebra) such that $A^W\cong A^{W'}$ (in fact, it suffices to assume that $A^W$ and $A^{W'}$ are derived equivalent), then $W\cong W'.$

 It will be convenient to use the following terminology.
 
 \begin{defin}
 
 An associative $\mathbb{C}$-algebra $A$ is said to be strongly rigid, if given a pair $W, W'$ of finite susbgroups of
 $\mathbb{C}$-algebra automorphisms of $A$ such that $A^W\cong A^{W'},$ then $W\cong W'.$ A $\mathbb{C}$-algebra $A$
 is rigid if for any nontrivial finite subgroup $W\leq \Aut_{\mathbb{C}}(A),$ we have $A\neq A^W.$
 \end{defin}

It follows easily from our results in \cite{T4} that if $X$ is a smooth affine algebraic variety over $\mathbb{C}$ whose algebraic fundamental group is finite (respectively trivial), then
$D(X)$ is rigid (respectively strongly rigid).

Since our main tool for studying rigidity questions of quantum algebras is reduction modulo a large prime,
we consider those quantizations for which the center $\mod p$ and corresponding Poisson bracket are easily described, for at least infinitely many primes.
Before giving a definition of such quantizations, we first recall the definition of the reduction $\mod p$ Poisson bracket.

It is well-known that given an associative flat $\mathbb{Z}$-algebra $R$ and a
prime number $p,$ the center $Z(R/pR)$ of its reduction mod $p$ acquires
a natural Poisson bracket (see for example \cite[Section 5.2]{BKK}), which
we refer to as the reduction $\mod p$ bracket, defined as follows. Given
$a, b\in Z(R/pR)$, let $z, w\in R$ be their respective lifts. 
Then the reduction $\mod p$ Poisson bracket $\lbrace a, b \rbrace$ is defined to be

$$\frac{1}{p}[z, w] \mod p\in Z(R/pR).$$

Let $S$ be a commutative ring, let $\mathcal{O}$ be a nonnegatively graded affine Poisson $S$-algebra of a negative degree Poisson bracket. Let $A$ be an $S$-algebra equipped with
an ascending $S$-algebra filtration such that $\Gr(A)=\mathcal{O}.$ Then we say that $A$ is a filtered quantization of $\mathcal{O}$ if
$$[x, y]=\lbrace \bar{x}, \bar{y}\rbrace+\text{low degree terms}, \quad x, y\in A,$$
where $\bar{x}, \bar{y}$ denote the top symbols of $x, y$ in $\Gr(A).$

The next definition is predicated on the following fact from commutative algebra, which is a direct consequence of the Chebotarev density
theorem as shown in [\cite{VWW}, Theorem 1.1].

\begin{theorem}\label{weak-Chebotarev}
Let $S$ be a finitely generated integral domain. Then for infinitely many primes $p$, there exists
a ring homomorphism $S\to \mathbb{F}_p.$
\end{theorem}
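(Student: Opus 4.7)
The plan is to reduce to the case of a number field, where the classical Chebotarev density theorem applies directly.

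A ring homomorphism $S \to \mathbb{F}_p$ in positive characteristic forces $p = \operatorname{char}(S)$, so the statement is non-vacuous only when $\mathbb{Z} \hookrightarrow S$, which I henceforth assume. Set $K := \operatorname{Frac}(S)$, let $d$ be the transcendence degree of $K / \mathbb{Q}$, choose a transcendence basis $t_1,\dots,t_d \in S$, and by the primitive element theorem pick $\theta \in K$ with $K = \mathbb{Q}(t_1,\dots,t_d)(\theta)$ together with a primitive polynomial $f(T) \in \mathbb{Z}[t_1,\dots,t_d][T]$ satisfying $f(\theta) = 0$. After inverting a suitable nonzero $N \in \mathbb{Z}[t_1,\dots,t_d]$ --- to express the generators of $S$ as polynomials in $t_1,\dots,t_d,\theta$, and to make the leading coefficient and the discriminant of $f$ units --- the localization $S[1/N]$ becomes finite \'etale over $R := \mathbb{Z}[t_1,\dots,t_d][1/N]$.

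I would then induct on $d$. In the base case $d = 0$, $S$ is contained (after inverting finitely many primes) in the ring of integers of the number field $K$, and Chebotarev density applied to the Galois closure of $K / \mathbb{Q}$ produces infinitely many primes that split completely. For any such $p$ outside a finite bad set, $f$ splits into distinct linear factors modulo $p$, and any root yields a homomorphism $S \to \mathbb{F}_p$. For $d \geq 1$, Hilbert's irreducibility theorem furnishes $(a_1,\dots,a_d) \in \mathbb{Z}^d$ avoiding the zero locus of $N$ such that $f(a_1,\dots,a_d;T)$ remains irreducible over $\mathbb{Q}$; the specialization $t_i \mapsto a_i$ then realizes a quotient of $S[1/N]$ as a finitely generated integral $\mathbb{Z}$-algebra of transcendence degree zero, to which the inductive hypothesis supplies homomorphisms to $\mathbb{F}_p$ for infinitely many $p$. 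Composing with the specialization produces the sought $S \to \mathbb{F}_p$.

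The main obstacle, I expect, is verifying that the quotient arising in the inductive step is genuinely an integral domain, which is exactly the content of Hilbert irreducibility. A cleaner alternative that bypasses induction entirely is to invoke a relative Chebotarev density theorem for the finite \'etale cover $\spec(S[1/N]) \to \spec(R)$: such a theorem guarantees, for a positive density of primes $p$, that a positive proportion of the $\mathbb{F}_p$-rational points of $\spec(R/pR)$ lift to $\mathbb{F}_p$-rational points of $\spec(S/pS)$, yielding the desired homomorphisms directly.
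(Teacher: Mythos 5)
Your argument is correct, but it reduces to the number-field case by a different mechanism than the paper does. The paper's own proof (given later, in its lemma on Chebotarev specializations) applies Noether normalization to $S$ over $\mathbb{Z}$ after inverting a single integer $l$, so that $S_l$ is finite over $\mathbb{Z}_l[x_1,\dots,x_n]$, and then uses lying-over/going-up to pick a prime ideal $I\subset S_l$ above $(x_1,\dots,x_n)$; the quotient $S_l/I$ is then automatically a domain finite over $\mathbb{Z}_l$, and Chebotarev applied to the integral closure of $\mathbb{Z}$ in it finishes the proof. You instead choose a transcendence basis and a primitive element, place a localization $S[1/N]$ inside $R[T]/(f)$ with $R=\mathbb{Z}[t_1,\dots,t_d][1/N]$, and invoke Hilbert irreducibility to find integer values $(a_1,\dots,a_d)$ off the locus $N=0$ with $f(a;T)$ irreducible, so that the specialized fiber $\mathbb{Z}[1/N(a)][T]/(f(a;T))$ is an order (localized) in a number field; Chebotarev then plays the same role as in the paper. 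The step you single out as the main obstacle—integrality of the specialized quotient—is exactly what the paper's lying-over argument gets for free without Hilbert irreducibility, which makes its route more elementary and shorter; what your route buys is an explicit integral specialization and the finite \'etale presentation over $R$, which connects directly to your closing remark about a relative Chebotarev/Lang--Weil statement for the cover $\spec(S[1/N])\to\spec(R)$—that quantitative picture is in fact closer to the cited result of Vu--Wood--Wood and would give density versions of the conclusion. Two small points: your observation that one must assume $\mathbb{Z}\hookrightarrow S$ (the statement fails, rather than being vacuous, in positive characteristic) is a fair reading, since the paper only uses the result for $S\subset\mathbb{C}$; and in your construction it is cleanest to take the primitive element $\theta$ inside $S$ (a $\mathbb{Z}$-linear combination of ring generators), so that $S\subseteq S[1/N]\subseteq R[\theta]$ and every homomorphism you build on the specialization restricts to the desired $S\to\mathbb{F}_p$.
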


\begin{defin}
Let $S\subset\mathbb{C}$ be a finitely generated subring.
Let $\mathcal{O}=\bigoplus_{n\geq 0}\mathcal{O}_n$ be a nonnegatively graded affine Poisson $S$-domain, such
that $\mathcal{O}_n$ is a finite rank free $S=\mathcal{O}_0$-module, $n\geq 0.$ Let an $S$-algebra $A$ be a filtered quantization of $\mathcal{O}$. We say that $A$ is a good quantization of
$\mathcal{O},$ equivalently of a conical affine Poisson variety $\spec(\mathcal{O})$, if
there exists a localization $S'=S_f, f\neq 0$ such that the following holds. For any prime $p$ and a ring homomorphism $\chi:S'\to\mathbb{F}_p$ there exists a 
subalgebra $Z_p\subset Z(A_{\mathbb{F}_p})$ 
and an isomorphism of algebras $$\iota:\mathcal{O}_{\mathbb{F}_p}^p\to Z_p,$$ 
 such that $\Gr(\iota)=\Id$ and $\iota$ interchanges
the reduction $\mod p$ Poisson bracket on $Z(A_{\mathbb{F}_p})$ with the minus of the Poisson bracket on $\mathcal{O}_{\mathbb{F}_p}^p.$

If $A$ is a filtered quantization of an affine Poisson $\mathbb{C}$-domain $\mathcal{O}$, then we call
$A$ a good quantization if there exists a finitely generated subring $S\subset\mathbb{C}$ and models $A_S, \mathcal{O}_S$ of $A, \mathcal{O}$ respectively, such that $A_S$ is a good quantization of $\mathcal{O}_S$ over $S.$
\end{defin}

Typical examples of good quantizations are enveloping algebras of algebraic Lie algebras and rings of differential operators on smooth varieties over
$\mathbb{C}.$
More generally, algebras arising from (certain) quantum Hamiltonian reductions (see Proposition \ref{good-Qow}).

The following is one of the main results of the paper. It is a significant strengthening of the previously mentioned rigidity results in the literature.

\begin{theorem}\label{rigidity}

Let $X$ be an affine normal conical Poisson variety over $\mathbb{C}$, whose smooth locus is symplectic with a finite algebraic fundamental group.
Let $A$ be a good filtered quantization of $X.$ Then $A$ is rigid. If in addition the smooth locus of $X$ is simply connected, then
$A$ is strongly rigid.

\end{theorem}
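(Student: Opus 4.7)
The plan is to treat both parts uniformly. Suppose we are given a $\mathbb{C}$-algebra isomorphism $\phi : A^W \xrightarrow{\sim} A^{W'}$, where the rigidity assertion corresponds to the case $W' = \{1\}$. The strategy is first to convert $\phi$ into an isomorphism of conical Poisson varieties $X/W \cong X/W'$ by reducing modulo a large prime and using the good quantization axiom, and then to recover $W$ from $X/W$ by an \'etale fundamental group argument tailored to the symplectic structure.

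For the first step I would spread $A, B, W, W', \phi$ out over a finitely generated subring $S \subset \mathbb{C}$ and pick a large prime $p$ with $p \nmid |W|\cdot|W'|$ together with a reduction $S \to \mathbb{F}_p$. Reynolds averaging then gives $(A^W)_{\mathbb{F}_p} = (A_{\mathbb{F}_p})^W$, and similarly for $W'$. The good quantization axiom supplies a central subalgebra $Z_p \subset A_{\mathbb{F}_p}$ and a filtered Poisson-antiisomorphism $\iota : Z_p \cong B^p_{\mathbb{F}_p}$. Arranging $Z_p$ to be $W$-stable, one checks that $Z_p^W \subset A^W_{\mathbb{F}_p}$ is central with associated graded $(B^W)^p_{\mathbb{F}_p}$ and Poisson-antiiso to it; that is, $A^W$ is itself a good quantization of $X/W = \spec B^W$, and likewise for $W'$. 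The reduction $\phi_p$ preserves centers and the reduction-mod-$p$ Poisson bracket, and matching the Frobenius-like central subalgebras on the two sides yields a Poisson isomorphism $B^W_{\mathbb{F}_p} \cong B^{W'}_{\mathbb{F}_p}$ for infinitely many $p$. A standard spreading-out argument using the conical $\mathbb{G}_m$-grading promotes this to a graded Poisson isomorphism $B^W \cong B^{W'}$ over $\mathbb{C}$, i.e.\ an isomorphism $X/W \cong X/W'$ of conical Poisson varieties.

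For the second step, $W$ acts symplectically on $X^{\text{sm}}$, so for any $g \in W \setminus \{1\}$ the normal representation at any fixed point is a nontrivial symplectic $\langle g\rangle$-representation and therefore has even dimension $\geq 2$. The free locus $U := X^{\text{sm}} \setminus \bigcup_{g \neq 1} X^g$ thus has codimension $\geq 2$ complement in $X^{\text{sm}}$, and Zariski--Nagata purity gives $\pi_1^{\text{et}}(U) = \pi_1^{\text{et}}(X^{\text{sm}}) =: \Pi$, finite by hypothesis. The map $U \to U/W$ is a connected \'etale Galois cover with group $W$, and $U/W$ agrees, up to codim $\geq 2$, with the smooth symplectic locus of $X/W$. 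Any Poisson iso $X/W \cong X/W'$ sends smooth symplectic loci to smooth symplectic loci and so induces $\pi_1^{\text{et}}(U/W) \cong \pi_1^{\text{et}}(U'/W')$, with $U'$ defined analogously for $W'$. The standard exact sequence
$$
1 \longrightarrow \pi_1^{\text{et}}(U) \longrightarrow \pi_1^{\text{et}}(U/W) \longrightarrow W \longrightarrow 1,
$$
together with its counterpart for $W'$, finishes both cases. In the simply connected (strongly rigid) case $\Pi = 1$, so $W \cong \pi_1^{\text{et}}(U/W) \cong \pi_1^{\text{et}}(U'/W') \cong W'$. In the rigid case $W' = 1$ gives $\pi_1^{\text{et}}(U'/W') = \Pi$ and hence $\pi_1^{\text{et}}(U/W) \cong \Pi$; comparing orders in the exact sequence forces $|\Pi| = |\Pi|\cdot|W|$, so $W = 1$.

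The hardest point is Step~1, specifically showing that an \emph{arbitrary} $\mathbb{C}$-algebra isomorphism $\phi$ must actually interchange the Frobenius-like centers $Z_p^W$ and $Z_p^{W'}$ rather than merely two central Poisson subalgebras of the correct abstract shape. In practice this reduces to a canonical (Azumaya-type) characterization of $Z_p^W$ inside $Z(A^W_{\mathbb{F}_p})$, typically via the rank of $A^W_{\mathbb{F}_p}$ as a module over it. The descent of the Poisson isomorphism from $\mathbb{F}_p$ to $\mathbb{C}$ and its upgrade to the graded level are comparatively routine given the conical structure.
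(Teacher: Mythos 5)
Your outline is essentially the paper's own proof: the paper's Theorem \ref{uber-rigidity} converts $A^{W}\cong A^{W'}$, via reduction mod $p$ and the good-quantization axiom, into an isomorphism of Poisson quotient varieties over $\mathbb{C}$, and then runs exactly your free-locus/codimension-two/fundamental-group-extension argument to compare $W$ and $W'$. The only real divergence is at the point you flag as hardest: rather than an Azumaya-rank characterization of $Z_p^{W}$ inside $Z(A^{W}_{\mathbb{F}_p})$, the paper shows there is nothing to choose --- normality of $X$ together with the symplectic smooth locus gives $\Gr(Z(A_{\mathbb{F}_p}))=\Gr(A_{\mathbb{F}_p})^p$, so the full center of $A_{\mathbb{F}_p}$ is already the Frobenius-type center, and (as in \cite{T3}, after localizing $S$) $Z(A^{W}_{\mathbb{F}_p})=Z(A_{\mathbb{F}_p})^{W}$; hence any isomorphism $\phi_p$ automatically intertwines the relevant centers with their reduction-mod-$p$ brackets, and the PI-degree/Azumaya machinery of Theorem \ref{center} is only needed for the more general embedding statement. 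One inaccuracy to repair in your Step 1: elements of $\Aut_{\mathbb{C}}(A)$ need not preserve the filtration, so you should not assert that $A^{W}$ is a good \emph{filtered} quantization of $X/W$, nor that the resulting isomorphism $B^{W}\cong B^{W'}$ is graded --- the $W$-action on $\mathcal{O}(X)$ is only the Poisson action obtained through the mod-$p$ centers, and accordingly the paper descends to characteristic $0$ by base change along $S^{\infty}\to\mathbb{C}$ (the ultraproduct) rather than by a graded spreading-out, which still produces exactly the input your Step 2 requires (finite groups acting by Poisson automorphisms, symplectically on the smooth locus, with isomorphic quotients).
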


The following result shows nonexistence of injective homomorphisms between enveloping algebras
 of semi-simple Lie algebras of the same dimension.

\begin{theorem}\label{enveloping}

Let $\mathfrak{g}, \mathfrak{g'}$ be a pair of non-isomorphic complex semi-simple Lie algebras of equal dimension. 
Then there are no injective $\mathbb{C}$-algebra homomorphisms between $U(\mathfrak{g})$ and $U(\mathfrak{g}').$ Assume
in addition that $\rank(\mathfrak{g})=\rank(\mathfrak{g'}).$
 Let $\chi, \chi'$ be central characters of $U(\mathfrak{g}), U(\mathfrak{g}').$ 
 Then there are no injective $\mathbb{C}$-algebra homomorphisms between $U_{\chi}(\mathfrak{g})$ and $U_{\chi'}(\mathfrak{g'}).$

\end{theorem}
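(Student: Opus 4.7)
The plan is to argue by reduction modulo a large prime $p$, exploiting the Poisson bracket on the mod-$p$ center supplied by the good quantization property. Assume for contradiction that $\phi : U(\mathfrak{g}_1) \hookrightarrow U(\mathfrak{g}_2)$ is an injection, and set $n = \dim \mathfrak{g}_i$ and $r_i = \rank \mathfrak{g}_i$. I would choose a finitely generated subring $S \subset \mathbb{C}$ over which $\phi$ and the PBW generators are defined, and apply the cited reduction theorem to obtain a homomorphism $S \to \mathbb{F}_p$ for a large prime $p$. The reduction $\phi_p : U(\mathfrak{g}_1)_{\mathbb{F}_p} \hookrightarrow U(\mathfrak{g}_2)_{\mathbb{F}_p}$ is an injection of prime PI-algebras of PI-degrees $p^{(n-r_i)/2}$, each free of rank $p^n$ over its $p$-center $Z_p^{(i)} \cong \Sym(\mathfrak{g}_i)^{(1)}_{\mathbb{F}_p}$.

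The crux is to show $\phi_p(Z_p^{(1)}) \subseteq Z(U(\mathfrak{g}_2)_{\mathbb{F}_p})$. The PI-degree inequality (subalgebras inherit identities) gives $r_1 \ge r_2$; handling the strict inequality $r_1 > r_2$ by a separate argument, one reduces to $r_1 = r_2 = r$, in which case the Goldie quotients are central simple of the same degree. The double centralizer theorem inside the target quotient then forces the centralizer of the image of the source to equal the center of the target, so $\phi_p(Z_p^{(1)})$ is indeed central. With centrality in hand, the reduction-mod-$p$ bracket is preserved by $\phi_p$ tautologically: for central lifts $A, B$, $\tfrac{1}{p}[\phi(A), \phi(B)] = \phi(\tfrac{1}{p}[A, B])$ modulo $p$, so $\phi_p|_{Z_p^{(1)}}$ is Poisson. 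By the good quantization property, both $p$-centers are identified with $\Sym(\mathfrak{g}_i)^{(1)}_{\mathbb{F}_p}$ carrying minus the Kirillov--Kostant bracket, so we obtain a dominant finite Poisson morphism $\psi : \mathfrak{g}_2^* \to \mathfrak{g}_1^*$ between Frobenius twists over $\mathbb{F}_p$. The origin is the unique zero-dimensional symplectic leaf of $\mathfrak{g}_i^*$ (since the Lie algebras are semisimple), so $\psi(0) = 0$, and the linearization $d\psi|_0 : \mathfrak{g}_2^* \to \mathfrak{g}_1^*$ dualizes to a Lie algebra homomorphism $\mathfrak{g}_1 \to \mathfrak{g}_2$; matching the quadratic and higher jets using the Poisson condition, together with the dominance of $\psi$, promotes this to an isomorphism over $\mathbb{F}_p$ for infinitely many $p$, hence $\mathfrak{g}_1 \cong \mathfrak{g}_2$ over $\mathbb{C}$, contradicting the hypothesis.

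The statement on central reductions is handled by the same template, with $U(\mathfrak{g}_i)$ replaced by $U_\chi(\mathfrak{g}_i)$ and $\Sym(\mathfrak{g}_i)$ replaced by the coordinate ring of the nilpotent cone $\mathcal{O}(\mathcal{N}_i) = \Gr(U_\chi(\mathfrak{g}_i))$; the equal-rank hypothesis matches GK-dimensions $n - r$ and PI-degrees, so the centralizer step applies verbatim and produces a Poisson isomorphism $\mathcal{N}_1 \cong \mathcal{N}_2$, which determines the Lie algebra (for instance via the Springer resolution or the orbit stratification). The main obstacle is the centrality step---ensuring $\phi_p$ maps $Z_p^{(1)}$ into $Z(U(\mathfrak{g}_2)_{\mathbb{F}_p})$ rather than merely into the centralizer of the image---together with the need for a supplementary argument when $r_1 > r_2$, since the direct PI-degree comparison fails in that direction and the double centralizer theorem no longer collapses the centralizer onto the center.
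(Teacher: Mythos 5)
Your skeleton (reduce mod a large prime, show the image of the $p$-center is central, pass to a Poisson map of spectra, linearize at the origin/cone point to get a Lie algebra map) is the same as the paper's, but the proposal has a genuine hole exactly where you flag it: the case $\rank(\mathfrak{g}_1)>\rank(\mathfrak{g}_2)$. The first statement of the theorem assumes only equal dimensions, and the embedding-of-PI-algebras comparison only gives $r_1\geq r_2$, so "handling the strict inequality by a separate argument" is not a throwaway remark --- without it the first part is simply not proved. The paper closes this by an independent characteristic-zero inequality in the \emph{other} direction: by Rybnikov (Mishchenko--Fomenko/Gelfand--Kirillov), $U(\mathfrak{g}_1)$ contains a commutative subalgebra of Krull dimension $\frac{1}{2}(\dim\mathfrak{g}_1+\rank\mathfrak{g}_1)$, while the maximal Krull dimension of a commutative subalgebra of $U(\mathfrak{g}_2)$ is $\frac{1}{2}(\dim\mathfrak{g}_2+\rank\mathfrak{g}_2)$; an embedding therefore forces $r_1\leq r_2$, hence equality of ranks, and only then does the equal-PI-degree machinery start. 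You need this (or some substitute) before anything else in your argument can run.

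Two further steps are asserted rather than proved. First, the centrality step: the classical double centralizer theorem applies to simple subalgebras of a central simple algebra \emph{containing the center}, and the image of the Goldie quotient $D_1$ in $D_2$ need not contain $Z(D_2)$; one must instead argue via a degree count over the compositum $Z(D_2)\cdot\phi(Z(D_1))$ (using that $[D_1:Z(D_1)]=[D_2:Z(D_2)]$, which again needs equal ranks), or, as the paper does, via a simple module of maximal dimension on which a putative non-central image acts non-scalarly. Second, your endgame is too optimistic: a finite dominant Poisson morphism does not yield a Poisson \emph{isomorphism} $\mathcal{N}_1\cong\mathcal{N}_2$, and "matching higher jets plus dominance" does not by itself give injectivity of the linearized map (in characteristic $p$, dominant finite maps can have vanishing differential). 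The paper's route is to extract from the Poisson homomorphism a Lie algebra map $\mathfrak{g}\to m/m^2\cong\mathfrak{g}'$ (using $[\mathfrak{g},\mathfrak{g}]=\mathfrak{g}$ to land in $m$), and to prove its injectivity by the induction $\psi(I)=\psi([I,I])\subseteq\lbrace m^n,m^n\rbrace\subseteq m^{2n-1}$, whence $\psi(I)\subseteq\bigcap_n m^n=0$; equal dimensions then force $\mathfrak{g}\cong\mathfrak{g}'$. Also note the paper does not linearize on $\mathfrak{g}^*$ for the first part: it first upgrades the mod-$p$ centrality to $\phi(Z(U\mathfrak{g}))\subseteq Z(U\mathfrak{g}')$ in characteristic zero via Veldkamp's theorem, passes to central reductions, and reduces everything to the nilpotent-cone case, so you would need to either follow that reduction or justify your $\mathfrak{g}^*$-level argument independently.
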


We also have the following general result that provides  a useful upper bound on finite groups of automorphisms
of filtered quantizations. To state it, we need to recall that given a Poisson algebra $B$ with a maximal Poisson ideal $m$
and the residue field $\bold{k}=B/m,$ then the Poisson bracket defines a $\bold{k}$-Lie algebra
structure on $m/m^2.$

\begin{theorem}\label{automorphisms}

Let $B$ be a Poisson $\mathbb{C}$-domain with the unique Poisson maximal ideal $m$. Let
$A$ be a good filtered quantization of $B.$ Then any finite subgroup of automorphisms
of $A$ is isomorphic to a subgroup of Lie algebra automorphisms of $m/m^2.$

\end{theorem}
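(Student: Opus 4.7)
The strategy is to pass to characteristic $p$ and extract the required Lie representation of $G$ from the induced Poisson action on the reduction-mod-$p$ center. First I would choose a finitely generated subring $S\subset\mathbb{C}$ over which $A$, $B$, the ideal $m$, and every element of the finite subgroup $G\le\Aut(A)$ is defined, with $G$ acting faithfully and $S$-linearly on $A_S$. Invoking the Chebotarev-type theorem stated above together with the good-quantization hypothesis, one picks a prime $p>|G|$ admitting a base change $S\to\mathbb{F}_p$ for which the central subalgebra $Z_p\subset A_{\mathbb{F}_p}$ and the Poisson isomorphism $\iota:Z_p\xrightarrow{\sim}B_{\mathbb{F}_p}^p$ are as in the definition of good quantization. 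The $G$-action on $A_{\mathbb{F}_p}$ preserves its center, and it preserves $Z_p$ as well, either because $Z_p$ is intrinsically characterised by $\Gr Z_p=B_{\mathbb{F}_p}^p$, or because it is generated by $p$-th powers, which $G$ manifestly respects.

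Composing the restricted action with $\iota$ and the Frobenius identification $B_{\mathbb{F}_p}\xrightarrow{\sim}B_{\mathbb{F}_p}^p$, $x\mapsto x^p$, produces a homomorphism $\rho:G\to\Aut_{Poisson}(B_{\mathbb{F}_p})$. Since $m_{\mathbb{F}_p}$ remains the unique Poisson maximal ideal of $B_{\mathbb{F}_p}$ for $p\gg 0$, it is $\rho(G)$-stable; the Poisson bracket induces a Lie bracket on $m_{\mathbb{F}_p}/m_{\mathbb{F}_p}^2$, and $\rho$ restricts to $\bar\rho:G\to\Aut_{Lie}(m_{\mathbb{F}_p}/m_{\mathbb{F}_p}^2)$. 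Because both the Lie algebra $m/m^2$ and the map $\bar\rho$ are defined over $S$, one lifts back to obtain a homomorphism $G\to\Aut_{Lie}(m/m^2)$ in characteristic zero.

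The principal obstacle is proving $\bar\rho$ injective. Suppose $g\in\ker\bar\rho$. Since $p>|G|$, Cartan's linearisation lemma applied to the complete local Poisson ring $\hat{B}_{m,\mathbb{F}_p}$ shows that the $g$-action there is $m$-adically conjugate to its linearisation on $\Sym(m/m^2)$, which is trivial; hence $g$ acts trivially on $\hat{B}_{m,\mathbb{F}_p}$. As $B$ is nonnegatively graded with $m$ its augmentation ideal, $B_{\mathbb{F}_p}$ embeds in this completion, so $g$ acts trivially on $B_{\mathbb{F}_p}$, and thus on $Z_p$. The remaining step --- where the good-quantization structure is invoked most crucially --- is to promote triviality on the central subalgebra $Z_p$ to triviality on $A_{\mathbb{F}_p}$: since $A_{\mathbb{F}_p}$ is finitely generated and generically Azumaya over $Z_p$, a $Z_p$-algebra automorphism of order coprime to $p$ is locally inner, and averaging over $G$ (using $|G|<p$) produces an inner conjugation trivialising it. Faithfulness of $G$ on $A$, which survives reduction mod $p$ for large $p$, then forces $g=e$.
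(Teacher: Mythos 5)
The decisive gap is your last step, the passage from ``$g$ acts trivially on the center mod $p$'' to ``$g=e$''. From triviality on $Z_p$ you infer that $g$ acts on $A_{\mathbb{F}_p}$ by an inner automorphism, and then appeal to faithfulness of the reduced action; but an inner automorphism is in general a \emph{nontrivial} automorphism, so faithfulness gives no contradiction, and ``averaging over $G$'' does not convert an inner automorphism into the identity. This is exactly the point where the paper does real work, in the proof of Theorem \ref{center}: an element $g$ of a finite group acts semi-simply on $A$ in characteristic $0$, so a nontrivial $g$ has an eigenvector $y$ with eigenvalue $c\neq 1$ surviving reduction for $p\gg 0$; if $g$ were inner on $A_{\mathbb{F}_p}$, say $g(x)=axa^{-1}$, then $ay=cya$, and comparing top symbols in the commutative domain $\Gr(A_{\mathbb{F}_p})$ forces $c=1$, a contradiction. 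The Remark following Theorem \ref{center} (conjugation by $x$ in $U(L)$ for the two-dimensional nonabelian Lie algebra $L$) shows this use of semi-simplicity is unavoidable: there exist nontrivial automorphisms acting trivially on the mod-$p$ centers for all $p$, so your implication cannot hold without some such argument. (Your local analysis near $m$ --- the equivariant-section/averaging argument showing that triviality on $m/m^2$ forces triviality on $B_{\mathbb{F}_p}$ --- is fine and parallels the paper's semi-simplicity argument in characteristic $0$, even though ``Cartan's lemma'' strictly applies to regular rings.)

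Two further steps are asserted rather than proved, and the paper's route is designed to avoid them. First, $G$-invariance of $Z_p$: neither of your reasons is valid in general ($Z_p$ is not generated by $p$-th powers, and having prescribed associated graded does not characterize it); the paper instead shows, using normality of $X$ and the symplectic structure on its smooth locus, that $\Gr(Z(A_{\mathbb{F}_p}))=\Gr(A_{\mathbb{F}_p})^p$, so $Z_p$ coincides with the full center and invariance is automatic. Second, the ``lift back to characteristic zero'': $\bar\rho$ is \emph{not} defined over $S$ --- it is a map into automorphisms of an $\mathbb{F}_p$-Lie algebra --- and transporting an embedding of $G$ into $\Aut_{Lie}(m/m^2\otimes\overline{\mathbb{F}}_p)$ back to $\mathbb{C}$ requires a genuine lifting argument (e.g.\ through the Witt vectors, using $p\nmid |G|$ and smoothness of the relevant group scheme for $p\gg 0$). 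The paper sidesteps both single-prime issues at once: Theorem \ref{center} gives an injective map $\Gamma\to P\Aut(B_{S^{\infty}})$, and since $S^{\infty}$ has characteristic zero one composes with a homomorphism $S^{\infty}\to\mathbb{C}$ to land $\Gamma$ in $P\Aut_{\mathbb{C}}(B)$, after which the restriction to $\Aut(m/m^2)$ is injective by semi-simplicity. As written, your proposal has a wrong key step and two unsupported ones, so it does not yet constitute a proof.
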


This result is applicable to spherical subalgebra of symplectic reflection algebras and central reductions of
finite $W$-algebras. More generally, it is well-suitable for algebras obtained via quantum Hamiltonian reduction.
The next result is a generalization of our earlier result on finite subgroups of automorphisms
of enveloping algebras (\cite{T2}) to finite $W$-algebras.

\begin{cor}\label{W-algebra}
Let $e\in\mathfrak{g}$ be a nilpotent element of a complex  semi-simple Lie algebra, and $\chi:Z(U(\mathfrak{g}))\to\mathbb{C}$
be a central character. Let $W(e, \chi)$ denote the central reduction of the finite $W$-algebra of $e$ with respect to the character $\chi.$
Then any finite subgroup of automorphisms of $W(e, \chi)$ is isomorphic to a subgroup of Lie algebra automorphisms
of $\mathfrak{g}(e).$

\end{cor}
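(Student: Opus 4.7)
The plan is to derive Corollary~\ref{W-algebra} by applying Theorem~\ref{automorphisms} to $A = W(e,\chi)$ equipped with its Kazhdan filtration. By Premet's theory of finite $W$-algebras, the associated graded is $B := \mathbb{C}[S_e \cap \mathcal{N}]$, the coordinate ring of the nilpotent Slodowy variety; here $S_e = e + \mathfrak{g}^f$ is the Slodowy slice attached to an $\mathfrak{sl}_2$-triple $(e,h,f)$, and $\mathcal{N}$ is the nilpotent cone of $\mathfrak{g}$. (Indeed, $\Gr U(\mathfrak{g},e) \cong \mathbb{C}[S_e]$, and passing to the central reduction cuts out the vanishing locus of the augmentation ideal of $\mathbb{C}[\mathfrak{g}]^{G}|_{S_e}$, which equals $S_e \cap \mathcal{N}$.) The Kazhdan grading makes $B$ a finitely generated nonnegatively graded affine Poisson $\mathbb{C}$-algebra with Poisson bracket of degree $-2$.

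I would first verify the remaining hypotheses of Theorem~\ref{automorphisms}. The maximal ideal $m_e$ at the point $e$ is Poisson, since $e$ is the unique $\mathbb{C}^*$-fixed point of the Kazhdan contraction; moreover, it is the \emph{unique} Poisson maximal ideal of $B$, because the symplectic leaves of $S_e \cap \mathcal{N}$ are the transverse intersections $S_e \cap \mathcal{O}$ for nilpotent $G$-orbits $\mathcal{O}$, and only $\mathcal{O} = G \cdot e$ yields a zero-dimensional leaf, namely $\{e\}$. That $W(e,\chi)$ is a good quantization of $B$ follows from its standard realization via quantum Hamiltonian reduction, by Proposition~\ref{good-Qow}.

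It then remains to relate $m_e/m_e^2$ to $\mathfrak{g}(e)$. By Premet's theorem on the linearization of the Slodowy-slice Poisson bracket at $e$, the cotangent $T_e^*(S_e) \cong (\mathfrak{g}^f)^*$ carries a Lie bracket canonically isomorphic to that of $\mathfrak{g}^e = \mathfrak{g}(e)$, and the surjection $\mathbb{C}[S_e] \twoheadrightarrow B$ induces a Lie-algebra surjection $\mathfrak{g}(e) \twoheadrightarrow m_e/m_e^2$. Theorem~\ref{automorphisms} then embeds any finite subgroup $G \le \Aut(W(e,\chi))$ into $\Aut_{\mathrm{Lie}}(m_e/m_e^2)$; to obtain the stated embedding into $\Aut_{\mathrm{Lie}}(\mathfrak{g}(e))$ one upgrades the $G$-action on the quotient $m_e/m_e^2$ to a compatible action on the full cotangent space $\mathfrak{g}(e)$. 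The plan is to carry this out by lifting $G$ to a finite subgroup of filtered automorphisms of the unreduced algebra $U(\mathfrak{g},e)$, whose induced action on $\mathbb{C}[S_e]$ must fix $e$ (the unique point of $S_e$ lying over the most degenerate Poisson point) and hence acts naturally on $T_e^*(S_e) = \mathfrak{g}(e)$ preserving its Lie bracket. This lifting step from the central reduction $W(e,\chi)$ to $U(\mathfrak{g},e)$, together with the verification that the resulting map $G \to \Aut_{\mathrm{Lie}}(\mathfrak{g}(e))$ remains injective, is where I expect the principal technical work to lie.
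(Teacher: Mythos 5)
Your first two steps are exactly the paper's: by Proposition~\ref{good-W} (the paper proves goodness for $W_{\chi}(\mathfrak{g},e)$ directly rather than via Proposition~\ref{good-Qow}, but the content is the same reduction to $U_{\chi}(\mathfrak{g})$ and $\mathcal{N}$), the central reduction is a good filtered quantization of the nilpotent Slodowy variety, and Theorem~\ref{automorphisms} embeds any finite $G\leq\Aut(W(e,\chi))$ into the Lie algebra automorphisms of $m_e/m_e^2$. Your verification that $m_e$ is the unique Poisson maximal ideal (zero-dimensional symplectic leaves are the transverse slices $S_e\cap\mathcal{O}$, and only $\mathcal{O}=G\cdot e$ gives one) is a point the paper leaves implicit, and it is correct.

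The gap is in your endgame. The paper finishes in one line: it invokes the well-known identification of $m/m^2$ with $\mathfrak{g}(e)$ (Premet's description of the slice and its Poisson structure at the cone point) and stops; there is no passage to the unreduced algebra anywhere. Your more cautious reading, that a priori one only gets a central quotient of $\mathfrak{g}(e)$ because the restricted Casimirs may contribute linear terms at $e$, is not unreasonable, but the remedy you propose does not work as stated: you plan to lift $G$ from $\Aut(W(e,\chi))$ to a finite group of filtered automorphisms of $U(\mathfrak{g},e)$, and nothing in Theorem~\ref{center} or Theorem~\ref{automorphisms} (or in general ring theory) produces such a lift --- automorphisms of a central reduction need not preserve, or come from, any structure on the ambient algebra, and a cohomological obstruction to lifting a finite group action through a central quotient is entirely possible. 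Since you yourself flag this lifting as ``the principal technical work'' and do not carry it out, the proof is not closed: either you must justify the direct isomorphism $m_e/m_e^2\cong\mathfrak{g}(e)$ used by the paper (or, failing that, show that a finite group acting faithfully on this quotient embeds into $\Aut_{\mathrm{Lie}}(\mathfrak{g}(e))$), or you must supply an actual lifting mechanism; at present neither is done.
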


Given a simple Noetherian $\mathbb{C}$-domain $A$, by the inverse Galois problem for $A$ we understand
classifying all finite groups $G$ (up to isomorphisms) for which there exists a $\mathbb{C}$-domain $B$ equipped with a faithful action of $G$
by $\mathbb{C}$-algebra automorphisms, such that $A\cong B^{G}.$  We solve the inverse Galois problem
for several classes of spherical subalgebras of rational Cherednik algebras which include all (classical) generalized Weyl algebras, as well as quantum tori.

Given an associative ring $R$, recall that its Picard group, denoted by $\Pic(R)$, is defined as the group of isomorphism classes
of invertible  $R$-bimodules under the tensor product. This group is closely related to the automorphism group of $R.$ Namely, there is a natural
homomorphism $\Aut(R)\to \Pic(R)$, whose kernel consists of inner automorphisms of $R.$ It is known that this map is surjective for many
important classes of quantum algebras such as: the first Weyl algebra by classical works of Dixmier and Stafford \cite{St}, and quantum tori by a result of
Berest, Ramadoss and Tang \cite{BRT}. In this paper we describe the Picard group of the $n$-quantum tori (tensor product of quantum tori) 
Theorem \ref{picard}. Our
proof yet again relies on the reduction to a large prime technique, thus it is\ similar in spirit (and motivated by) to the one by Stafford.

\section{Examples of good filtered quantizations}

In this section, we explain how the enveloping algebras of algebraic Lie algebras and a large class of algebras obtained from quantum Hamiltonian reductions (which includes finite W-algebras) give rise to good filtered quantizations.

 First, we recall a key computation of the reduction $\mod p$ Poisson bracket for restricted Lie
algebras due to Kac and Radul \cite{KR}. 
Let $R$ be a  commutative reduced ring of  characteristic $p>0.$
Let $\mathfrak{g}$ be a restricted Lie algebra over $R$
with the restricted structure map $g\to g^{[p]}, g\in \mathfrak{g}.$
As usual,  $Z_p(\mathfrak{g})$ denotes the  $p$-center of $U(\mathfrak{g})$: the central $R$-subalgebra of the enveloping algebra $U(\mathfrak{g})$
generated by elements of the form $g^p-g^{[p]}, g\in \mathfrak{g}.$ It is well-known that the map $g\to g^{p}-g^{[p]}$ induces
homomorphism of $R$-algebras $$i:\Sym (\mathfrak{g})\to Z_p(\mathfrak{g}),$$
where $Z_p(\mathfrak{g})$ is viewed as an $R$-algebra via the Frobenius map $F:R\to R.$
The homomorphism $i$ is an isomorphism when $R$ is perfect.
Recall also that
the Lie algebra bracket on $\mathfrak{g}$ defines the Kirillov-Kostant Poisson bracket on the symmetric algebra $\Sym(\mathfrak{g}).$


The following is the above mentioned key result from \cite{KR}. Throughout the paper, given an abelian group $V$, by $V_p$ we denote $V/pV.$

\begin{lemma}\label{kac}
Let $S$ be a finitely generated integral domain over $\mathbb{Z}.$
Let $\mathfrak{g}$ be an algebraic Lie algebra over $S.$
Then $Z_p(\mathfrak{g}_p)$ is a Poisson subalgebra of $Z(U(\mathfrak{g}_p)),$ where $Z(U(\mathfrak{g}_p))$ is
equipped with the reduction $\mod p$ Poisson bracket.  Moreover, the induced Poisson
bracket on $Z_p(\mathfrak{g}_p)$ coincides with the negative of the Kirrilov-Kostant bracket:
$$\lbrace a^p-a^{[p]}, b^p-b^{[p]}\rbrace=-([a, b]^p-[a,b]^{[p]}),\quad a\in \mathfrak{g}_p, b\in \mathfrak{g}_p.$$

\end{lemma}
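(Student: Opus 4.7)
I would compute the bracket directly by lifting to characteristic zero and exploiting the resulting derivation structure. Pick lifts $\tilde a, \tilde b \in \mathfrak{g}_S$ of $a, b$ and lifts $\alpha, \beta\in \mathfrak{g}_S$ of $a^{[p]}, b^{[p]}$; set $z = \tilde a^p - \alpha$ and $w = \tilde b^p - \beta$ in $U(\mathfrak{g}_S)$, so that $z, w$ reduce mod $p$ to the central elements $a^p - a^{[p]}$, $b^p - b^{[p]}\in Z(U(\mathfrak{g}_p))$. By definition, $\lbrace a^p - a^{[p]}, b^p - b^{[p]}\rbrace = \tfrac{1}{p}[z,w]\bmod p$.

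Since $\bar z$ is central, for every $y \in U(\mathfrak{g}_S)$ one has $[z, y]\in p\,U(\mathfrak{g}_S)$, so the operator $R := \tfrac{1}{p}\ad z : U(\mathfrak{g}_S)\to U(\mathfrak{g}_S)$ is well-defined, and a direct check of Leibniz shows it is a derivation (since each of $[z,x]y$ and $x[z,y]$ is separately in $pU(\mathfrak{g}_S)$). Its mod-$p$ reduction $\bar R$ is therefore a derivation of $U(\mathfrak{g}_p)$, uniquely determined by its values on $\mathfrak{g}_p$. So the Poisson bracket equals $\bar R(b^p - b^{[p]})$, which by the derivation rule expands as $\sum_{i=0}^{p-1} b^i \bar R(b) b^{p-1-i} - \bar R(b^{[p]})$; in particular, it suffices to understand $\bar R$ on $\mathfrak{g}_p$. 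The tool for computing $\bar R$ on $\mathfrak{g}_p$ is the universal identity
$$[x^p, y] = (\ad x)^p(y) - p\,M_p(x,y),\qquad M_p(x,y) = \sum_{k=1}^{p-1}\tfrac{1}{p}\binom{p}{k}(-1)^{p-k}x^k y x^{p-k},$$
valid in any associative ring (the coefficients $\tfrac1p\binom{p}{k}$ being integers by Kummer), combined with the restricted-Lie-algebra axiom $(\ad a)^p = \ad(a^{[p]})$ on $\mathfrak{g}_p$, which extends (as both sides are derivations of $U(\mathfrak{g}_p)$ agreeing on generators) to a mod-$p$ congruence $(\ad \tilde a)^p(y)\equiv [\alpha, y]\pmod p$ of operators on $U(\mathfrak{g}_S)$.

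The main obstacle is the concluding combinatorial step of verifying that the resulting expression for $\bar R(b^p - b^{[p]})$ collapses to $-(\tilde c^p - \widetilde{c^{[p]}})\bmod p$, where $\tilde c = [\tilde a, \tilde b]$ lifts $c = [a,b]$. The cleanest way to organize the bookkeeping is via Jacobson's formula $(\ell_1 + \ell_2)^{[p]} = \ell_1^{[p]} + \ell_2^{[p]} + \sum_{i=1}^{p-1} s_i(\ell_1, \ell_2)$, where $i\,s_i(\ell_1,\ell_2)$ is defined as the coefficient of $\lambda^{i-1}$ in $(\ad(\lambda \ell_1 + \ell_2))^{p-1}(\ell_1)$; applied to $\ell_1=\tilde a, \ell_2=\tilde b$ (or to a generic one-parameter family of lifts), this recasts the required identity as a universal polynomial equation in the free restricted Lie algebra of rank two, which can be verified once and for all, e.g.\ through a faithful matrix representation in which $a^{[p]}$ is the ordinary matrix $p$-th power. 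Closedness of $Z_p(\mathfrak{g}_p)$ under $\lbrace -, - \rbrace$ follows immediately, since the right-hand side lies in $Z_p(\mathfrak{g}_p)$.
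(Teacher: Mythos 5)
Your general framework is fine and is the standard way to organize such a computation: since $\bar z$ is central, $R=\tfrac1p\ad(z)$ is a well-defined derivation of the ($\mathbb{Z}$-flat) algebra $U(\mathfrak{g}_S)$, its reduction $\bar R$ is determined on generators, the identity $[x^p,y]=(\ad x)^p(y)-pM_p(x,y)$ with integral $\tfrac1p\binom pk$ is correct, and $(\ad a)^p=\ad(a^{[p]})$ does extend to $U(\mathfrak{g}_p)$ since a $p$-th power of a derivation is a derivation in characteristic $p$; moreover, once the displayed identity on generators is known, stability of $Z_p(\mathfrak{g}_p)$ under the bracket follows because the reduction mod $p$ bracket is a biderivation of the center. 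Note, however, that the paper itself gives no argument for this lemma: it is quoted from Kac--Radul \cite{KR}, so what you are really attempting is a self-contained proof of that result, and your text stops short of being one.

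The genuine gap is that the step you yourself call ``the main obstacle'' --- showing that $\bar R(b^p-b^{[p]})$ collapses to $-([a,b]^p-[a,b]^{[p]})$ --- is the entire content of the lemma, and the verification you propose would fail. Any representation, matrix or otherwise, of the free restricted Lie algebra on two generators into an associative algebra in which $a^{[p]}$ becomes the ordinary associative $p$-th power factors through the restricted enveloping algebra, hence kills every element $a^p-a^{[p]}$ of the $p$-center; both sides of the desired identity map to $0$ there, so checking it in such a representation is vacuous. What one can legitimately do is use functoriality along a restricted embedding $\mathfrak{g}_p\hookrightarrow\mathfrak{gl}_N$ (this induces an injection $U(\mathfrak{g}_p)\hookrightarrow U(\mathfrak{gl}_N)$ by PBW, compatibly with $\mathbb{Z}$-forms and $p$-centers), but then the nontrivial computation for $\mathfrak{gl}_N$ --- i.e.\ the actual Kac--Radul calculation, e.g.\ via the $t$-expansion of $(\tilde a+t\tilde b)^p$ and Jacobson's $s_i$, or via the embedding into crystalline differential operators on the group --- still has to be carried out, and your proposal does not do it. (A further wrinkle: the free restricted Lie algebra is an awkward ``universal case'' here, since the reduction mod $p$ bracket requires a flat $\mathbb{Z}$-Lie-algebra lift, which it does not naturally possess; $\mathfrak{gl}_N(\mathbb{Z})$ is the right universal object.) As written, the key identity is asserted rather than proved.
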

Now Lemma \ref{kac} immediately implies that if $\mathfrak{g}$ is a Lie algebra of an alegbraic group over $\mathbb{C}$,
then the enveloping algebra $U(\mathfrak{g})$ is a good quantization of 
$\Sym(\mathfrak{g})$ (equipped with the Kirillov-Kostant Poisson bracket). A similar computation shows that if $X$ is a smooth affine variety over
$\mathbb{C},$ then the algebra of differential operators $D(X)$ is a good quantization of the cotangent bundle $T^{*}(X).$

Let a reductive algebraic group $G$ act on a smooth affine algebraic variety $X$ over $\mathbb{C}.$
Let $\mathfrak{g}$ be the Lie algebra of $G.$
Let $\mu:T^*(X)\to \mathfrak{g}^*$ be the corresponding moment map.
We will assume that this map is flat, and for generic $G$-invariant character $\chi\in \mathfrak{g}^*$ the action of $G$
on $\mu^{-1}(\chi)$ is free. 
   
       Given a $G$-invariant character $\chi\in \mathfrak{g}^*$, denote by $U_{\chi}(G, X)$ the quantum Hamiltonian reduction of
       $D(X)$ with respect to $\chi$. So, 
       $$U_{\chi}(G, X)=(D(X)/D(X)\mathfrak{g}^{\chi})^G,$$ where 
       $$\mathfrak{g}^{\chi}=\lbrace g-\chi(g)\in D(X), \quad g\in\mathfrak{g}\rbrace.$$
The usual filtration on $D(X)$ by the order of differential operators induces the corresponding filtration on $U_{\chi}(G, X).$
Then it follows from the flatness of the moment map that 
$$\Gr(U_{\chi}(G, X))=\mathcal{O}(\mu^{-1}(0)//G).$$

Next,  we need to recall some results and notations associated with quantum Hamiltonian reduction of the ring of crystalline
differential operators in characteristic $p>0$ from \cite{BFG}. 

Let $X$ be a smooth affine variety over an algebraically closed field $\bold{k}$ of characteristic $p$, 
and $G$ be a reductive algebraic group over $\bold{k}$ with the Lie algebra $\mf{g}.$ Let $G$ act on $X.$
Denote by $D(X)$  the ring of crystalline differential operators on $X.$
As before, we have the moment map $\mu:T^*(X)\to\mathfrak{g}^*$ and the algebra homomorphism  $U(\mf{g})\to D(X).$
Recall that we have the canonical isomorphism 
$$i:\text{Sym}(\mf{g})^{(1)}\to Z_p(\mathfrak{g}).$$
 On the other hand,  the center of $D(X)$ 
is generated over $\mathcal{O}(X)^p$ by $$\xi^p-\xi^{[p]}, \xi\in T_X.$$ This leads to the canonical isomorphism
$$\mathcal{O}(T^*(X))^{(1)}\to Z(D(X)).$$
 We have the homomorphism $\eta':Z_p(\mf{g})\to Z(D(X))$ and the corresponding homomorphism
$$\eta:\text{Sym}(\mf{g})^{(1)}\to \mathcal{O}(T^*(X))^{(1)}.$$
Given $\chi\in \mathfrak{g}^*$, then $\chi^{[1]}\in \mathfrak{g}^*$ is defined as follows: 
$$\chi^{[1]}(g)=\chi(g)^p-\chi(g^{[p]}),\quad g\in \mathfrak{g}.$$
Using the above homomorphisms $\eta, \eta',$ it follows that the center of $U_{\chi}(G, X)$ contains 
a subring $Z_0$ isomorphic to the Frobenius twist of $\mathcal{O}(\mu^{-1}(\chi^{[1]})//G).$
In this setting the following holds.

\begin{lemma}\label{Azumaya}\cite{BFG}
Let $\chi\in (\mathfrak{g}^*)^G$ be a character.  
Then $U_{\chi}(G, X)$ is a finite algebra over  $\mu^{-1}(\chi^{[1]})//G.$ If $G$ acts freely of $\mu^{-1}(\chi^{[1]}),$
then $U_{\chi}(G, X)$ is an Azumaya algebra over $\mu^{-1}(\chi^{[1]})//G.$

\end{lemma}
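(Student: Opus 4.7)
The plan is to leverage the fact that $D(X)$ is itself Azumaya over its center $\mathcal{O}(T^*X)^{(1)}$ and, in the free-action case, to descend the reduction along the $G$-torsor $\mu^{-1}(\chi^{[1]}) \to \mu^{-1}(\chi^{[1]})//G$. I would first pin down the central inclusion $Z_0 \hookrightarrow U_{\chi}(G,X)$ explicitly. For $g\in\mathfrak{g}$, set $\tilde g = g - \chi(g)\in \mathfrak{g}^\chi$ and $\tilde g^{[p]} := g^{[p]} - \chi(g^{[p]})\in \mathfrak{g}^\chi$. A direct characteristic-$p$ computation in $D(X)$ gives
\[
\tilde g^p - \tilde g^{[p]} \;=\; \bigl(g^p - g^{[p]}\bigr) - \bigl(\chi(g)^p - \chi(g^{[p]})\bigr) \;=\; \eta'\bigl(g^p - g^{[p]}\bigr) - \chi^{[1]}(g).
\]
Since both $\tilde g$ and $\tilde g^{[p]}$ lie in $\mathfrak{g}^\chi$, the left-hand side vanishes modulo $D(X)\mathfrak{g}^\chi$, so $\eta'(g^p - g^{[p]}) \equiv \chi^{[1]}(g)$ in the quotient. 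Consequently $\eta$ factors through $\mathcal{O}(\mu^{-1}(\chi^{[1]}))^{(1)}$ when composed with the projection $D(X) \to D(X)/D(X)\mathfrak{g}^\chi$, and passing to $G$-invariants produces the central inclusion $Z_0 = \mathcal{O}(\mu^{-1}(\chi^{[1]})//G)^{(1)} \hookrightarrow U_{\chi}(G,X)$.

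Finiteness then follows in three standard steps: $D(X)$ is finite over $\mathcal{O}(T^*X)^{(1)}$ because it is Azumaya; the preceding computation places the ideal of $\mu^{-1}(\chi^{[1]})^{(1)}$ inside $D(X)\mathfrak{g}^\chi$, so $D(X)/D(X)\mathfrak{g}^\chi$ is finite over $\mathcal{O}(\mu^{-1}(\chi^{[1]}))^{(1)}$; and since $G$ is reductive, $G$-invariants preserve finite generation over $Z_0$.

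For the Azumaya assertion under the free-action hypothesis, I would base-change $D(X)$ to $Y = \mu^{-1}(\chi^{[1]})^{(1)}$ to obtain a $G$-equivariant Azumaya $\mathcal{O}_Y$-algebra $\mathcal{A}$ and then descend along the $G$-torsor $Y \to Y//G$. Étale locally on $Y//G$ the torsor trivializes, and on such a trivialization the elements of $\mathfrak{g}^\chi$ implement the infinitesimal $G$-action on $\mathcal{A}$ by inner derivations, so $(\mathcal{A}/\mathcal{A}\mathfrak{g}^\chi)^G$ can be identified fibrewise as a matrix algebra of rank $p^{\dim X - \dim G}$, which is precisely the rank an Azumaya algebra over the symplectic quotient $\mu^{-1}(\chi^{[1]})//G$ ought to have. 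The main obstacle is this last identification: one must compare the left ideal $\mathcal{A}\mathfrak{g}^\chi$ with the ``vertical'' directions of the torsor inside $\mathcal{A}$ and invoke faithfully flat descent of Azumaya algebras along the torsor. Freeness is essential here---it is what provides the torsor structure and the injection of $\mathfrak{g}^\chi$ as a sub-bundle of the correct codimension.
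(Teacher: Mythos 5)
The paper itself offers no proof of this lemma: it is quoted verbatim from [BFG], so there is nothing internal to compare your argument against and it must be judged on its own merits. The first half of your proposal is sound. The computation $\tilde g^p-\tilde g^{[p]}=(g^p-g^{[p]})-\chi^{[1]}(g)$ is correct, and it is exactly the mechanism by which the paper's preceding paragraph produces the central subalgebra $Z_0\cong\mathcal{O}(\mu^{-1}(\chi^{[1]})//G)^{(1)}$; your finiteness argument (Azumaya, hence module-finite, for $D(X)$ over $\mathcal{O}(T^*X)^{(1)}$; the ideal of $\mu^{-1}(\chi^{[1]})^{(1)}$ being absorbed into $D(X)\mathfrak{g}^{\chi}$; and finiteness of invariants of a finite equivariant module under a reductive group) is complete and standard.

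The Azumaya half, which is the actual content of the lemma, is not proved. What you present is a strategy whose decisive step --- identifying $(\mathcal{A}/\mathcal{A}\mathfrak{g}^{\chi})^G$ \'etale-locally over $\mu^{-1}(\chi^{[1]})//G$ with a matrix algebra of size $p^{\dim X-\dim G}$ --- is precisely the point at issue, and you yourself label it ``the main obstacle'' without supplying the argument. To close it one needs at least two things that do not follow from trivializing the torsor: (i) that $D(X)/D(X)\mathfrak{g}^{\chi}$ is locally free over $\mathcal{O}(\mu^{-1}(\chi^{[1]})^{(1)})$ of rank $p^{2\dim X-\dim G}$, which is where freeness of the $G$-action really enters, via a regular-sequence/flatness argument for the elements $\tilde g$ inside the rank-$p^{2\dim X}$ Azumaya algebra; and (ii) an identification, after faithfully flat base change along the torsor, of $U_{\chi}(G,X)\otimes_{Z_0}\mathcal{O}(\mu^{-1}(\chi^{[1]})^{(1)})$ with the endomorphism algebra of that locally free module (a Morita-type statement), from which the Azumaya property descends. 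There is also a bookkeeping point you gloss over: the torsor relevant for descent is $\mu^{-1}(\chi^{[1]})^{(1)}\to(\mu^{-1}(\chi^{[1]})//G)^{(1)}$, a torsor under the Frobenius twist $G^{(1)}$, whereas the invariants defining $U_{\chi}(G,X)$ are taken for $G$ acting through $X$, and in characteristic $p$ invariance under $G$ and under $\mathfrak{g}$ (which is all the inner derivations $\ad(\tilde g)$ detect) are genuinely different conditions; their interplay requires an explicit argument. So the proposal is a plausible reading of how [BFG] proceed, but as written the Azumaya assertion remains unestablished.
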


Now going back to the characteristic 0 setting, we have the following.

\begin{prop}\label{good-Qow}
Let $G$ be a complex reductive algebraic group acting (with the Lie algebra $\mathfrak{g})$ on a smooth affine variety $X$ over $\mathbb{C}.$
Assume that the moment map $\mu:T^*(X)\to \mathfrak{g}^*$ is flat and  $\mu^{-1}(0)//G$ is a normal variety with an open symplectic subset.
Then for any character $\chi$ the corresponding quantum Hamiltonian reduction $U_{\chi}(G, X)$ is a good quantization of $\mu^{-1}(0)//G.$

\end{prop}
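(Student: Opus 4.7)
The plan is to exhibit a central subalgebra $Z_p \subset A_{\mathbb{F}_p}$ using the characteristic-$p$ machinery of \cite{BFG} recalled before Lemma \ref{Azumaya} and to verify the three requirements of the good-quantization definition by combining Lemma \ref{kac} with its analog for crystalline differential operators. First, pick a finitely generated $S \subset \mathbb{C}$ over which $G, X, \mu, \chi$ are all defined, with compatible filtered models $A_S = U_{\chi,S}(G,X)$ and $B_S = \mathcal{O}(\mu^{-1}(0)//G)_S$. For $p \gg 0$ and any base change $S \to \mathbb{F}_p$, the group $G_{\mathbb{F}_p}$ remains linearly reductive so that invariant theory commutes with reduction mod $p$, flatness of $\mu$ is preserved, and hence $A_{\mathbb{F}_p} \cong U_{\bar\chi}(G_{\mathbb{F}_p}, X_{\mathbb{F}_p})$ with $\Gr(A_{\mathbb{F}_p}) = B_{\mathbb{F}_p}$.

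For the candidate central subalgebra, take the image in $A_{\mathbb{F}_p}$ of the Frobenius-twisted center $\mathcal{O}(T^*X_{\mathbb{F}_p})^{(1)} \subset Z(D(X_{\mathbb{F}_p}))$ under the quantum reduction. By Lemma \ref{Azumaya} this yields a central subalgebra $Z_p \cong \mathcal{O}(\mu^{-1}(\bar\chi^{[1]})//G)^{(1)}$, generated by the classes of $\xi^p - \xi^{[p]}$ for $\xi \in T_{X_{\mathbb{F}_p}}$ together with $\mathcal{O}(X_{\mathbb{F}_p})^p$. Since each such generator has filtration degree $p$ with top symbol $\xi^p$, and the character shift $\bar\chi$ is of strictly lower filtration degree than $\mathfrak{g} \subset D(X)_1$, the top symbols of the relations cutting out $Z_p$ agree with the relations cutting out $\mu^{-1}(0)//G$; hence $\Gr(Z_p) = B_{\mathbb{F}_p}^p$. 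The isomorphism $\iota: Z_p \cong B_{\mathbb{F}_p}^p$ is then constructed by sending each generator to its top symbol and extending multiplicatively, yielding a filtered ring isomorphism with $\Gr(\iota) = \Id$ whose well-definedness is checked by induction on filtration degree, using that $Z_p$ is commutative and that $\mu^{-1}(0)//G$ is normal with an open symplectic subset.

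Next, the Poisson-bracket compatibility follows from the $D(X)$-analog of Lemma \ref{kac}: on generators of $Z(D(X_{\mathbb{F}_p}))$ one has $\{\xi^p - \xi^{[p]}, \eta^p - \eta^{[p]}\} = -([\xi,\eta]^p - [\xi,\eta]^{[p]})$, and this identity descends through the Hamiltonian reduction to $Z_p$. Transporting to $B_{\mathbb{F}_p}^p$ via $\iota$ produces $-[\xi,\eta]^p$, which coincides with the negative of the Poisson bracket on $B_{\mathbb{F}_p}^p$ induced by its Frobenius-twist identification with $B_{\mathbb{F}_p}$ (under which $\{\xi^p, \eta^p\} = \{\xi,\eta\}^p = [\xi,\eta]^p$). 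Hence $\iota$ intertwines the reduction mod $p$ bracket with the negative of the bracket on $B_{\mathbb{F}_p}^p$.

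The main obstacle will be promoting the associated-graded level identification to a genuine ring isomorphism $\iota$: while Lemma \ref{Azumaya} identifies $Z_p$ abstractly with $\mathcal{O}(\mu^{-1}(\bar\chi^{[1]})//G)^{(1)}$ and the target $B_{\mathbb{F}_p}^p$ corresponds to $\mathcal{O}(\mu^{-1}(0)//G)^{(1)}$, the two Frobenius twists sit over different moment-map fibers and are not canonically isomorphic. The needed filtered isomorphism thus requires a noncanonical inductive lifting, for which the normality and open-symplectic-locus hypotheses on $\mu^{-1}(0)//G$ are used to rule out deformation-theoretic obstructions.
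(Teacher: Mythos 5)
Your overall skeleton (choose a model over a finitely generated $S\subset\mathbb{C}$, reduce mod $p\gg 0$, use the \cite{BFG} description of the center of the quantum Hamiltonian reduction, and get the sign of the bracket from the Kac--Radul type identity $\{\xi^p-\xi^{[p]},\eta^p-\eta^{[p]}\}=-([\xi,\eta]^p-[\xi,\eta]^{[p]})$ for crystalline differential operators) is the same as the paper's. But the step you yourself flag as the ``main obstacle'' is a genuine gap, and the way you propose to close it would not work. You are trying to compare the central subalgebra, which \cite{BFG} identifies with the Frobenius twist of $\mathcal{O}(\mu^{-1}(\bar\chi^{[1]})//G)$, with $\mathcal{O}(\mu^{-1}(0)//G)^p$, and you propose a ``noncanonical inductive lifting'' whose obstructions are to be killed by normality and the open symplectic locus. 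No such argument is given, and none can exist at that level of generality: for a base change to a general algebraically closed field $\bold{k}$ with $\bar\chi^{[1]}\neq 0$, the fibers $\mu^{-1}(\bar\chi^{[1]})//G$ and $\mu^{-1}(0)//G$ are genuinely different varieties (this is already visible for $U_{\chi}(\mathfrak{g})$ with a non-restricted central character), so no filtered isomorphism $\iota$ with $\Gr(\iota)=\Id$ exists there. Likewise, ``sending each generator to its top symbol and extending multiplicatively'' does not define a ring homomorphism without further argument.

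The point you are missing is the one that makes the paper's proof three lines long: the definition of a good quantization only requires the existence of $Z_p$ and $\iota$ after base changes $\rho:S\to\mathbb{F}_p$ to the \emph{prime} field. For such $\rho$ one has $\rho(\chi)^{[1]}=0$, since $\chi^{[1]}(g)=\chi(g)^p-\chi(g^{[p]})$ and the reduced character takes values in $\mathbb{F}_p$, where it is a restricted character (Frobenius fixes its values). Hence the shifted fiber coincides with $\mu^{-1}(0)$ on the nose, $Z_p\cong\mathcal{O}(\mu^{-1}(0)//G_{\mathbb{F}_p})^p$ canonically, $\Gr(Z_p)=\mathcal{O}(\mu^{-1}(0)//G_{\mathbb{F}_p})^p$, and there is nothing to lift or deform; the normality and open-symplectic-locus hypotheses are not needed for this identification at all (they enter elsewhere, e.g.\ in Theorem \ref{center}). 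Restricting to $\mathbb{F}_p$-points of $\spec S$ is exactly why the notion was set up with the auxiliary construction $S^{\infty}$ rather than arbitrary residue fields, and your proof should be repaired by invoking this vanishing rather than a deformation-theoretic lifting.
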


\begin{proof}
Let $S \subset\mathbb{C}$ be a finitely generated subring containing values of $\chi$ over which everything is defined.
Then given a base change $S\to\bold{k}$ to an algebraically closed field of sufficiently large characteristic,
we have $$Z(U_{\chi}(G_{\bold{k}}, X_{\bold{k}}))=\mathcal{O}(\mu^{-1}(\chi^{[1]})//G_{\bold{k}}).$$
Given any $\rho: S\to \mathbb{F}_p$, then clearly $\rho(\chi)^{[1]}=0.$
Thus $$Z_0\cong \mathcal{O}(\mu^{-1}(\chi^{[1]})//G_{\mathbb{F}_p})^p=\mathcal{O}(\mu^{-1}(0)//G_{\mathbb{F}_p}),$$ and $\Gr(Z_0)=\mathcal{O}(\mu^{-1}(0)//G_{\mathbb{F}_p})^p.$
So, $U_{\chi}(G, X)$ is a good quantization of $\mu^{-1}(0)//G.$
\end{proof}

Let $\mathfrak{g}$ be a semi-simple Lie algebra, $e\in \mathfrak{g}$ a nilpotent element, $\chi: Z(U\mathfrak{g})\to\mathbb{C}$
be a central character. Denote by $S_{e}$ the Slodowy slice at $e$, and by $\mathcal{N}$ the nilpotent cone. Recall that $W_{\chi}(\mathfrak{g}, e)$
is equipped with the Kazhdan filtration so that
 $$\Gr(W_{\chi}(\mathfrak{g}, e))=\mathcal{O}(S_{e}\cap \mathcal{N}).$$
Then we have the following result whose proof is very similar to that of Proposition \ref{good-Qow}.
\begin{prop}\label{good-W}
With the above notation, the
algebra $W_{\chi}(\mathfrak{g}, e)$ is a good filtered quantization of $S_{e}\cap \mathcal{N}.$

\end{prop}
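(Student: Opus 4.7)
The plan is to mimic the proof of Proposition \ref{good-Qow} after presenting $W_\chi(\mathfrak{g}, e)$ as the quantum Hamiltonian reduction of $U(\mathfrak{g})$ along a unipotent (rather than reductive) subgroup. Using the Whittaker realization, fix an $\mathfrak{sl}_2$-triple $(e, h, f)$, a good grading $\mathfrak{g} = \bigoplus \mathfrak{g}(i)$, the associated $p$-closed nilpotent subalgebra $\mathfrak{m} \subset \mathfrak{g}$, its unipotent group $M = \exp(\mathfrak{m})$, and the Whittaker character $\chi_e \in \mathfrak{m}^*$ given by $\chi_e(x) = \langle e, x\rangle$. Then
$$W_\chi(\mathfrak{g}, e) = \bigl(U(\mathfrak{g})/(U(\mathfrak{g})\mathfrak{m}_{\chi_e} + U(\mathfrak{g})\ker\chi)\bigr)^{\mathrm{ad}\, M},$$
where $\mathfrak{m}_{\chi_e} = \{m - \chi_e(m) : m \in \mathfrak{m}\}$, and the Kazhdan filtration yields $\Gr(W_\chi(\mathfrak{g}, e)) = \mathcal{O}(S_e \cap \mathcal{N})$ via the identification $S_e = \mu^{-1}(\chi_e)/M$, with $\mu : \mathfrak{g}^* \to \mathfrak{m}^*$ the dual of the inclusion.

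Next, pick a finitely generated subring $S \subset \mathbb{C}$ containing the values of $\chi$ and over which $\mathfrak{g}, e, \mathfrak{m}, \chi_e$ all have models, and let $\rho: S \to \mathbb{F}_p$ be a base change with $p$ large. By Lemma \ref{kac}, the $p$-center $Z_p(\mathfrak{g}_{\mathbb{F}_p}) \cong \Sym(\mathfrak{g}_{\mathbb{F}_p})^{(1)}$ is a Poisson subalgebra of $Z(U(\mathfrak{g}_{\mathbb{F}_p}))$ with reduction $\mod p$ bracket equal to minus the Kirillov--Kostant bracket. Exactly as in the proof of Proposition \ref{good-Qow}, for $m \in \mathfrak{m}$ the class of $m^p - m^{[p]}$ in $U/(U\mathfrak{m}_{\chi_e})$ becomes the scalar $\chi_e^{[1]}(m)$, which vanishes since $\rho(\chi_e)^{[1]} = 0$ for $p$ large; similarly $\rho(\chi)^{[1]} = 0$, so the generators of the image of the $p$-center of $Z(U\mathfrak{g})$ also vanish after central reduction. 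Taking $\mathrm{ad}\, M$-invariants, the composition $Z_p(\mathfrak{g}_{\mathbb{F}_p}) \to W_{\chi_{\mathbb{F}_p}}(\mathfrak{g}_{\mathbb{F}_p}, e_{\mathbb{F}_p})$ yields a central subalgebra $Z_p$ with
$$Z_p \cong \mathcal{O}(S_e \cap \mathcal{N})^{(1)}_{\mathbb{F}_p}.$$
Compatibility of the Kazhdan filtration with the PBW filtration then gives $\Gr(Z_p) = \mathcal{O}(S_e \cap \mathcal{N})^p_{\mathbb{F}_p}$ and $\Gr(\iota) = \mathrm{Id}$, and the Poisson bracket identification follows from Lemma \ref{kac} combined with the fact that Hamiltonian reduction carries the Kirillov--Kostant bracket on $\mathfrak{g}^*$ to the canonical Poisson bracket on $S_e \cap \mathcal{N}$.

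The main obstacle is the middle step: verifying that passing to $\mathrm{ad}\, M$-invariants does not shrink the image of the $p$-center below $\mathcal{O}(S_e \cap \mathcal{N})^{(1)}$, and that the resulting subalgebra is genuinely central in $W_{\chi_{\mathbb{F}_p}}$ rather than merely Poisson central on the associated graded. This relies on the transversality of the Slodowy slice to the coadjoint orbits of $M$ on $\mu^{-1}(\chi_e)$ and its characteristic $p$ analogue for $p$ large, which follow from standard finite-type arguments once the models over $S$ are chosen appropriately. The unipotence of $M$ (as opposed to the reductive setting of Proposition \ref{good-Qow}) is not an obstruction here because Lemma \ref{kac} applies to an arbitrary algebraic Lie algebra over $S$.
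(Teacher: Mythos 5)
Your reduction-mod-$p$ bookkeeping for the Whittaker character fails at the decisive point. For $m\in\mathfrak{m}$ the class of $m^p-m^{[p]}$ in $U(\mathfrak{g})/U(\mathfrak{g})\mathfrak{m}_{\chi_e}$ is indeed the scalar $\chi_e^{[1]}(m)=\chi_e(m)^p-\chi_e(m^{[p]})$, but this does \emph{not} vanish: $\mathfrak{m}$ is nilpotent, so for $p$ large $\chi_e(m^{[p]})=\langle e,m^{[p]}\rangle=0$ for degree reasons, and hence $\chi_e^{[1]}$ is the Frobenius twist of $\chi_e$, which is nonzero. The step ``$\rho(\chi)^{[1]}=0$'' that you copied from Proposition \ref{good-Qow} is special to invariant characters of a \emph{reductive} Lie algebra, where the character is supported on toral elements $t$ with $t^{[p]}=t$. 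Moreover, the nonvanishing is essential to your own argument: it is precisely because the $p$-center maps onto functions on the twist of $\mu^{-1}(\chi_e)=\chi_e+\mathfrak{m}^{\perp}\cong e+\mathfrak{m}^{\perp}$, rather than on $\mu^{-1}(0)=\mathfrak{m}^{\perp}$, that $M$ acts freely on the fiber and the quotient is the Slodowy slice; if $\chi_e^{[1]}$ really were $0$, the fiber would contain the origin, the $M$-action would not be free, and the quotient would not be $S_e$. The treatment of the central character is also off: $\chi$ is a character of $Z(U(\mathfrak{g}))$, not an element of $\mathfrak{g}^*$, so ``$\rho(\chi)^{[1]}=0$'' has no meaning, and $\ker\chi$ is not generated by elements of the form $g^p-g^{[p]}$. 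What the central reduction actually needs is the Veldkamp/Kac--Weisfeiler-type fact that killing $\ker\bar\chi$ cuts the image of the $p$-center from (the twist of) $S_e$ down to $S_e\cap\mathcal{N}$, equivalently that $Z(U_{\bar\chi}(\mathfrak{g}_{\mathbb{F}_p}))\cong\mathcal{O}(\mathcal{N}_{\mathbb{F}_p})^p$; this is exactly the input the paper's own proof invokes, by first checking that $U_{\chi}(\mathfrak{g})$ is a good quantization of $\mathcal{N}$ and then deducing the case of $W_{\chi}(\mathfrak{g},e)$, instead of redoing the Hamiltonian reduction by hand.

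The gap you flag in your middle step is also genuine and is not removed by your closing remark. Lemma \ref{kac} does apply to $\mathfrak{m}$, but everything else you need about the reduction itself --- that taking $\mathrm{ad}\,M$-invariants is exact in this situation, that the invariant part of the image of the $p$-center is all of $\mathcal{O}(S_e\cap\mathcal{N})^{(1)}$ and is genuinely central in $W_{\bar\chi}$, and the Azumaya-type behaviour --- is not covered by Lemma \ref{Azumaya}, which assumes the group is reductive; a unipotent $M$ is not linearly reductive in characteristic $p$, so invariants do not automatically commute with passing to the quotient. These statements are true, but they are theorems about modular finite $W$-algebras (Premet; the characteristic-$p$ Skryabin equivalence), not consequences of ``standard finite-type arguments.'' Your outline can be repaired --- replace the false vanishing by $\chi_e^{[1]}=\chi_e^{(1)}$ and quote the known characteristic-$p$ descriptions of the centers of reduced enveloping algebras and $W$-algebras --- but as written the key computation is incorrect and the central-reduction step is unsupported.
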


\begin{proof}

We will check that $U_{\chi}(\mathfrak{g})$ is a good quantization of $\mathcal{N}$, which easily implies the general case.
Let $G$ be the corresponding semi-simple algebraic group. Let $S$ be a finitely generated ring containing values of $\chi.$
Then for large enough $p,$ given any base change $S\to \mathbb{F}_p,$ then $\bar{\chi}$ corresponds to a central character
of a finite dimensional representation of $G_{\mathbb{F}_p}.$ Then it is well-known that 
$Z(U_{\bar{\chi}}(\mathfrak{g}_{\mathbb{F}_p}))$ can be identified with $\mathcal{O}(\mathcal{N}_{\mathbb{F}_p})^p.$ 
Thus $U_{\chi}(\mathfrak{g})$ is a good filtered quantization of $\mathcal{N}.$
\end{proof}

\section{Rigidity results}

The results of this section (in fact, the whole paper) are motivated by fundamental papers
of Belov-Kanel and Kontsevich \cite{BKK} and Tsuchimoto \cite{Ts} on automorphisms of Weyl algebras.
By considering reduction of the Weyl algebra $\mod p\gg 0$, they defined and studied a canonical homomorphism
$$\Aut(A_n(\mathbb{C}))\to S\Aut(\mathbb{A}^{2n}),$$
where $S\Aut(\mathbb{A}^{2n})$ denotes the group of symplectomorphisms of $\mathbb{A}^{2n}$ equipped with the standard symplectic form.
This homomorphism was conjectured to be an isomorphism.

The goal of this section is to introduce and study a ``dequantization'' functor $Z_{\infty}$ from a category of
 good quantizations to the category of Poisson algebras. In particular, we generalize
the above homomorphism of Belov-Kanel and Kontsevich.

More specifically, given a homomorphism of $S$-domains (where $S\subset\mathbb{C}$ is a finitely generated ring) $\phi:A\to B$ we would like to know when
the corresponding reduction modulo $p$ (for large enough primes $p$) homomorphism $\phi_{p}:A_p\to B_p$ preserves the center: 
$$\phi_p(Z(A_p))\subset Z(B_p).$$
If this is the case, we get a nice functor $Z_{\infty}$ from the category of certain $S$-algebras to $S_{\infty}$-Poisson algebras, where 
$S_{\infty}$ denotes the reduction of $S$ modulo the infinite prime
$$S_{\infty}=(\prod_pS/pS)/\bigoplus_p S/pS.$$
Recall that $\mathbb{C}_{\infty}$ is defined as the direct limit of $S_{\infty}$ over all finitely generated subrings $S\subset\mathbb{C}.$
We also consider the following variant of $S_{\infty}$ which is convenient while dealing with good quantizations. Given a finitely generated commutative domain
$S,$ we put
$$S^{\infty}=(\prod_{\chi:S\to\mathbb{F}_p}\mathbb{F}_p)/\bigoplus_{I\subset S}\prod_{\chi:S/I\to\mathbb{F}_p}\mathbb{F}_p,$$
where $I$ ranges through all nonzero proper ideals of $S,$ and $\chi$ ranges through ring homomorphisms from $S$ to $\mathbb{F}_p.$
It follows immediately from Theorem \ref{weak-Chebotarev} that the natural homomorphism $S\to S^{\infty}$ is injective.
Note that $S^{\infty}$
 is a rational invariant of $S$: given a nonzero $f\in S$, then $S^{\infty}=(S_f)^{\infty}.$ 
 

Throughout given a Poisson algebra $\mathcal{O}$ over a ring $\bold{k}$, then by $P\Aut_{\bold{k}}(\mathcal{O})$ we denote
the group of Poisson $\bold{k}$-linear automorphisms of $\mathcal{O}.$

Given an $S$-algebra $A$, we put $$Z_{\infty}(A)=\prod_pZ(A/pA)/\bigoplus_pZ(A/pA).$$ So $Z_{\infty}(A)$ is a Poisson $S_{\infty}$-algebra.
One defines similarly $Z_{\infty}(A)$ for a $\mathbb{C}$-algebra $A.$ Also, we put 
$$Z^{\infty}(A)=\prod_{\chi:S\to\mathbb{F}_p}Z(A_{\mathbb{F}_p})/\bigoplus_{0\neq I\subset S}\prod_{\chi:S/I\to\mathbb{F}_p}Z(A_{\mathbb{F}_p}).$$
We have the following  natural restriction homomorphisms for a $\mathbb{C}$-algebra  $A$ (respectively an $S$-algebra $A$) 
$$Z_{\infty}:\Aut(A)\to P\Aut_{\mathbb{C}_{\infty}}(Z_{\infty}(A)), \quad Z^{\infty}:\Aut_S(A)\to P\Aut_{S^{\infty}}(Z^{\infty}(A)).$$
Next, we remark that if $A$ is a good quantization of $\mathcal{O}$ over $S$, then we have a natural homomorphism $$B\mathcal{O}_{S^{\infty}}\to Z^{\infty}(A)$$
defined by mapping $b\in \mathcal{O}$ to $\prod_{\chi:S\to \mathbb{F}_p}\iota_p(\bar{b}^p)$, where $\iota_p:\mathcal{O}_{\mathbb{F}_p}^p\to Z(A/pA)$ is the homomorphism
from Definition \ref{good-Qow} and $\bar{b}$ denotes the image of $b$ in $B_{\mathbb{F}_p}.$

 The next theorem is a key result of this section.
 It is motivated in part by 
 the proof of equivalence between the Jacobian and the Dixmier conjectures [\cite{BKK} Proposition 2],
except we have to work a little harder as we are working with more general quantizations than Weyl algebras, which usually are not
 Azumaya algebras in characteristic $p.$

Throughout, given a Noetherian domain $A$, we denote by $D(A)$ its skew field of fractions.

We also need to recall that given an algebraically closed field $\bold{k}$ and a $\bold{k}$-domain $A$ such that $A$ is finite over its center $Z$
and $Z$ is finitely generated $\bold{k}$-algebra, then the PI-degree of $A$ equals to the largest dimension of a simple $A$-module, and
the square of the PI-degree of $A$ equals to the rank of $A$ as a $Z$-module.

\begin{theorem}\label{center}
Let $S\subset\mathbb{C}$ be a finitely generated subring.
Let $S$-algebras $A, B$ be good filtered quantizations of affine Poisson $S$-algebras $\mathcal{O}, \mathcal{O'}$ respectively, such $\mathcal{O}_{\mathbb{C}}, \mathcal{O'}_{\mathbb{C}}$ are normal domains and $\dim(\mathcal{O})=\dim(\mathcal{O'}).$
Assume also that Poisson brackets on $\spec(\mathcal{O}), \spec(\mathcal{O'})$ are generically symplectic.
Let $\phi:A\to B$ be a $S$-algebra embedding. Then the homomorphism $\phi$
restricts to a homomorphism of $S^{\infty}$-Poisson algebras
$$Z^{\infty}(\phi):\mathcal{O}_{S_{\infty}}\to \mathcal{O'}_{S_{\infty}}.$$ 
Also, we have a restriction homomorphism
$$Z^{\infty}_A: \Aut_S(A)\to P\Aut(\mathcal{O}_{S^{\infty}})$$ which is nontrivial on semi-simple automorphisms.


\end{theorem}

In other words, the above defined $Z_{\infty}$ is a functor from the category
of good quantizations of Poisson normal varieties (which are generically symplectic of a fixed dimension) to the category of $\mathbb{C}_{\infty}$-Poisson
algebras. Moreover, the functor $Z_{\infty}$  is faithful when restricted to
either semi-simple automorphisms.

The statement of the above theorem becomes nicer for the following class of filtered quantization.
Their definition is directly motivated and related to the notion of Frobenius constant quantizations
introduced by Bezrukavnikov and Kaledin \cite{BK}.

\begin{defin}\label{Frobenius-constant}
Let $S\subset\mathbb{C}$ be a finitely generated subring.
Let $A$ be  a filtered quantization of a finitely generated $S$-graded Poisson algebra $\mathcal{O}.$ Then we call $A$ an
$\infty$-Frobenius constant quantization if for all large enough primes  $p\gg 0$, there is an isomorphism of Poisson algebras 
$$\iota:(\mathcal{O}/p\mathcal{O})^p\cong Z(A/pA)$$ which is the identity on the level of associated graded algebras, such
 that $\iota$ interchanges the reduction $\mod p$ Poisson bracket
on $Z(A/pA)$ with the negative of the Poisson bracket on $(\mathcal{O}/p\mathcal{O})^p.$

\end{defin}

Examples of $\infty$-Frobenius constant quantizations are: rings of differential operators
on smooth affine varieties, certain quantum Hamiltonian reductions with rational characters, in particular
central reductions $U_{\chi}(\mathfrak{g})$, where $\mathfrak{g}$ is a semi-simple Lie algebra and
$\chi:Z(U(\mathfrak{g}))\to\mathbb{C}$ is a central character corresponding to a simple finite dimensional representation.

Thus, given an injective homomorphism of $\infty$-Frobenius constant quantizations $i:A\to B$ of normal generically symplectic Poisson varieties $X, Y$ of equal dimension,
then by Theorem \ref{center} we get a canonical homomorphism of $\mathbb{C}_{\infty}$-Poisson varieties
$i_{\infty}: Y_{\mathbb{C}_{\infty}}\to X_{\mathbb{C}_{\infty}},$
as well as a canonical homomorphism 
$$\Aut(A)\to P\Aut(X_{\mathbb{C}_{\infty}}).$$

\begin{proof}[Proof of Theorem \ref{center}]

At first, we need to show that given an embedding $\phi:A\to B$ of good filtered quantizations $A, B$ over
a finitely generated ring $S\subset\mathbb{C}$ such that $A, B$ have the equal Gelfand-Kirillov dimension, then
$\phi_p(Z(A_p))\subset Z(B_p).$ 

Since $D(A), D(B)$ have equal Gelfand-Kirrilov dimensions,
it follows that $D(A)$ is a finite left (or right) $D(B)$-module via $\phi.$
Let $1\in V\subset B$ be a finite $S$-submodule, such that
$V$ generates $B$ as an $S$-algebra and $B\subset \phi(D(A))V.$
Let $1\in W\subset D(A)$ be a finite $S$-submodule so that $V^2\subset \phi(W)V.$
By localizing $S$ if necessary, we may assume that $V, W$ are spanned by elements whose coordinates belong to $S^*.$
Then given a base change $S\to\bold{k}$ to an algebraically closed field $\bold{k}$ of characteristic $p\gg 0,$
we have that $\Gr(A_{\bold{k}}), \Gr(B_{\bold{k}})$ are domains (as are $A_{\bold{k}}, B_{\bold{k}})$ and $W_{\bold{k}}\subset D(A_{\bold{k}}),$
as well as $\phi(W_{\bold{k}})\subset D(B_{\bold{k}}).$
Thus, $V_{\bold{k}}^2\subset \phi_{\bold{k}}(W_{\bold{k}})V_{\bold{k}}$, so $V^n_{\bold{k}}\subset  \phi_{\bold{k}}(W_{\bold{k}}^n)V_{\bold{k}}$ for all $n\geq 1.$
Denote by $A'$ the $\bold{k}$-subalgebra of $D(A_{\bold{k}})$ generated by $W_{\bold{k}}$. Thus, we have a homomorphism
$\phi_{\bold{k}}:A'\to D(B_{\bold{k}})$ and
 $B_{\bold{k}}\subset \phi_{\bold{k}}(A')V_{\bold{k}}$
with $\dim_{\bold{k}}V_{\bold{k}}<\infty.$ Hence the Gelfand-Kirillov dimension of $B_{\bold{k}}$ is at most the Gelfand-Kirillov dimension
of  $\phi_{\bold{k}}(A').$ Since the Gelfand Kirilov dimension of $A'$ is bounded above by the Gelfand-Kirillov dimension
of $A_{\bold{k}}$ which is equal to the Gelfand-Kirillov dimension of $\dim(B_{\bold{k}})$,
we conclude that $\phi_{\bold{k}}$ must be injective.

Suppose that for infinitely many primes $p$, there exists
 $z_p\in Z(A_p)$ 
such that $\phi_p(z)\notin Z(B_p).$ Then, there exists infinitely many primes $p$ and
 a base changes $S/pS\to\bold{k}$ to an algebraically closed field $\bold{k},$ so that $\Gr(A_{\bold{k}}), \Gr(B_{\bold{k}})$ are
domains and
$\phi_{\bold{k}}(Z(A_{\bold{k}}))\nsubseteq Z(B_{\bold{k}}).$ Let $z\in Z(A_{\bold{k}})$ so that $\phi_{\bold{k}}(z)\notin Z(B_{\bold{k}}).$
 Let $0\neq\delta\in Z(A_{\bold{k}})$ be such that $\delta$ vanishes on the complement of the Azumaya locus of $\spec(Z(A_{\bold{k}}))$. 
 Let $0\neq \delta'\in Z(B_{\bold{k}})$
be such that $\phi_{\bold{k}}(\delta)$ divides $\delta'$ and vanishes on the complement of the Azumaya locus of $\spec(Z(B_{\bold{k}})).$
Put $S_1=(A_{\bold{k}})_{\delta}$ and $S_2=(B_{\bold{k}})_{\delta'}.$
So, we have a homomorphism $\bar{\phi}:S_1\to S_2$ of Azumaya $\bold{k}$-algebras, such that $z\in Z(S_1)$ but $\phi(z)\notin Z(S_2).$
 
Next, by the assumption, we have that $\Gr(A_{\bold{k}})^p\subset \Gr(Z(A_{\bold{k}}))$. On the other
hand, since $\spec(\Gr(A_{\bold{k}}))$ is a normal Poisson variety which is symplectic on its smooth locus,
a very standard argument shows that $\Gr(Z(A_{\bold{k}}))\subset\Gr(A_{\bold{k}})^p$ (see for example [\cite{T1}, Lemma 2.4].) Thus, $\Gr(Z(A_{\bold{k}}))=\Gr(A_{\bold{k}})^p.$
Then the rank of $A_{\bold{k}}$ as a $Z(A_{\bold{k}})$-module equals to the rank of $\Gr(A_{\bold{k}})$ as a $\Gr(Z(A_{\bold{k}}))=\Gr(A_{\bold{k}})^p$-module
[\cite{T1}, Lemma 2.3].
On the other hand, since the PI-degree of $A_{\bold{k}}$ equals to the rank of $A_{\bold{k}}$ over its center, we conclude that
the PI-degree of $A_{\bold{k}}$ equals to $p^{\dim A_{\bold{k}}}$. Since $\dim A_{\bold{k}}=\dim B_{\bold{k}}$, we get that
$\text{PI-degree}(S_2)=\text{PI-degree}(S_1).$ Let $V$ be a simple $S_2$-module on which $\phi_{\bold{k}}(z)$ does not act like a scalar.
To construct such a module suffices to take a simple
module afforded by a character $\chi:Z(S_2)\to\bold{k}$ such that $\phi_{\bold{k}}(z)$ has a nonzero image in $(S_2/Z(S_2))_{\chi}.$
Then $V$ viewed as an $S_1$-module must be simple on which $z$ acts as a non-scalar,
a contradiction.




To summarize what we have so far, given an injective $S$-homorphism $f:A\to B$ (under the assumptions of the theorem), then it restricts to an $S^{\infty}$-Poisson homomorphism
$$Z^{\infty}(f):Z^{\infty}(A)\to Z^{\infty}(B).$$
Denote by $Z^{\infty}_n(A)\subset Z^{\infty}(A)$ the $S^{\infty}$-submodule consisting of images of
elements  of the form $\prod_{\chi:S\to \mathbb{F}_p}z_{\chi}$ with $\deg(z_{\chi})\leq n$ for all $\chi.$
Put $Z^{\infty}_b(A)=\bigcup_nZ^{\infty}_n(A).$ Clearly $Z_b^{\infty}(A)$ is an $S^{\infty}$-subalgebra of $Z^{\infty}(A).$
We define similarly $Z^{\infty}_{b}(B)$. Now, it follows easily that $f$ restricts to a homomorphism from $Z^{\infty}_b(A)$ to $Z^{\infty}_b(B).$
Recall that since $A$ is a good quantization of $\mathcal{O}=\Gr(A)$, we have a filtration preserving isomorphism of algebras $\mathcal{O}_{\mathbb{F}_p}^p\cong Z(A_{\mathbb{F}_p}).$
Next, we argue that $Z^{\infty}_b(A)$ is isomorphic to $\mathcal{O}_{S^{\infty}}.$
At first, remark that given a finite rank free $S$-module $V$, then
$V\otimes_SS^{\infty}=V^{\infty},$ where $$V^{\infty}=\prod_{\chi:S\to\mathbb{F}_p}V_{\mathbb{F}_p}/\bigoplus_{I\subset S}\prod_{\chi:S/I\to\mathbb{F}_p}V_{\mathbb{F}_p}.$$
So, we have isomorphisms $(\mathcal{O}_{\leq n})^p\cong Z_{pn}(A_{\mathbb{F}_p}), n\geq 0.$ Hence we get isomorphisms
$(\mathcal{O}_n)_{S^{\infty}}\cong Z_{pn}^{\infty}(A)$. Taking the union over $n,$ we get the desired isomorphism $\mathcal{O}_{S^{\infty}}\to Z_{b}^{\infty}(A)$. Therefore, we get the desired restriction homomorphism
of Poisson algebras $$Z^{\infty}(f):\mathcal{O}_{S^{\infty}}\to\mathcal{O'}_{S^{\infty}},$$ 
and by taking $A=B,$ we obtain the restriction homomorphism of the corresponding automorphism groups
$$Z^{\infty}_A:\Aut_{S}(A)\to P\Aut_{S^{\infty}}(\mathcal{O}_{S^{\infty}}).$$

Finally, suppose that $\Id\neq\phi\in\Aut_S(A)$ is a semi-simple automorphism which is in the kernel for $Z^{\infty}_A.$
Let $\chi:S\to\mathbb{F}_p$ be a homomorphism with $p\gg 0$, so that $\bar{\phi}=\chi(\phi)\neq \Id$ acts trivially on $Z(A_{\mathbb{F}_p})$ and $\Gr(A_{\mathbb{F}_p})$  (and hence $A_{\mathbb{F}_p}$ ) are domains. We also have that
$\bar{\phi}$ is a semi-simple automorphism over $\bar{\mathbb{F}}_p.$ 
Denote by $D$ the localization of $A_{\mathbb{F}_p}$ with respect to its center. So, $D$ is a division algebra over the field of fractions of
$Z(A_{\mathbb{F}_p}).$
  By the Skolem-Noether theorem, $\bar{\phi}$ is an inner automorphism of $D.$
Hence, there exists a non-zero element $a\in A_{\mathbb{F}_p}$ so that $\bar{\phi}(x)a=ax$ for all $x\in A_{\mathbb{F}_p}.$ 
If $y\in A_{\bar{\mathbb{F}}_p}$ is an eigenvector of $\phi$ of an eigenvalue $c\neq 1,$ then
$ay=cya$ which is a contradiction since $\Gr(A_{\bar{\mathbb{F}}_p})$ is commutative.

\end{proof}

\begin{remark}

The assumption in Theorem \ref{center} that $\phi:A\to A$ is semi-simple is essential. Indeed, let
$A=U(L),$ where $L$ is a 2-dimensional Lie algebra with a basis $h, x$ and the Lie bracket $[h, x]=x.$
Then put $\phi(a)=xax^{-1}, a\in A$, so $\phi(x)=x, \phi(h)=h-1.$ Therefore, $\phi$ is in the kernel of the restriction homomorphism
$$Z_{\infty}:\Aut(U(L))\to\prod_pZ(U(L_p)).$$
\end{remark}

\begin{proof}[Proof of Theorem \ref{automorphisms}]
Let $\Gamma\leq \Aut(A)$ be a finite subgroup.  We may choose a finitely generated subring $S\subset \mathbb{C}$ over which $A$ and the action of $\Gamma$ are defined.
So, $\Gamma\leq \Aut_S(A_S).$
Then by Theorem \ref{center}, we have an injective homomorphism $$Z^{\infty}|_{\Gamma}:\Gamma\to P\Aut(B_{S^{\infty}}).$$
Combining this with a homomorphism $S^{\infty}\to\mathbb{C}$, we obtain a subgroup $\Gamma'$ of
Poisson $\mathbb{C}$-automorphisms of $B,$ such that $\Gamma\cong \Gamma'.$ 
Recall that by the assumption $m$ is the unique Poisson maximal ideal of $B.$ So, $\Gamma'$ must preserve $m$, and moreover,
since the action of $\Gamma'$ on $B$ is semi-simple, it follows that the restriction homomorphisms $\Gamma'\to \Aut(m/m^2)$ is injective.
Hence, $\Gamma$ is isomorphic to a subgroup of the Lie algebra automorphisms of $m/m^2$, as desired.
\end{proof}

Now we can easily show Corollary \ref{W-algebra}.

\begin{proof}[Proof of Corollary \ref{W-algebra}]

Let $G\leq \Aut (W_{\chi}(\mathfrak{g}, e))$ be a finite subgroup of automorphisms of a central reduction (by a character $\chi$) of 
a finite W-algebra associated to a semi-simple Lie algebra $\mathfrak{g}$ and its nilpotent element $e\in\mathfrak{g}.$
By Lemma \ref{good-W}, $W_{\chi}(\mathfrak{g})$ is a good quantization of $S_e$-the Slodowy slice at $e$ of the nilpotent cone
of $\mathfrak{g}.$ Then we may use Theorem \ref{automorphisms} to conclude that G must be isomorphic to a subgroup
of Lie algebra automorphisms of $m/m^2$, where m is the maximal ideal in $\mathcal{O}(S_e)$ corresponding to the point $e\in S_e.$
Now, it is well-known that $m/m^2\cong \mathfrak{g}(e)$ and we are done. 
\end{proof}

Next, to show Theorem \ref{rigidity}, we in fact prove the following stronger result.

\begin{theorem}\label{uber-rigidity}
Let $X, Y,$ be affine normal conical Poisson varieties over $\mathbb{C}$, which are symplectic on their smooth loci.
Denote by $U, V$ the smooth loci of $X, Y.$
Let $A, B$ be good filtered quantizations of $X, Y$ respectively. Let $G\leq \Aut(A), G'\leq \Aut(B)$ be finite subgroups of $\mathbb{C}$-automorphisms and
let $\phi:A^G\to B^{G'}$ be a $\mathbb{C}$-algebra embedding. Suppose that  $B^{G'}$ is a finite left (or right) $A^{G}$-module
via $\phi.$ Then there exist groups $W, W',$ containing fundamental groups of $U, V,$ (respectively) as normal subgroups, such that
$$W/\pi_1(U)\cong G,\quad W'/\pi_2(V)\cong G',\quad W\cong W'.$$
 Moreover, if $\phi$ is an isomorphism then $W=W'.$

\end{theorem}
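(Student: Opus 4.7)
The plan is to apply the mod-$p$ / $Z_\infty$ machinery of Theorem \ref{center} to the embedding $\phi:A^G\hookrightarrow B^{G'}$, descend the resulting Poisson morphism to the quotient varieties $X/G$ and $Y/G'$, and translate the finite Poisson morphism so obtained into a relation of \'etale fundamental groups. First I would pick a finitely generated subring $S\subset \mathbb{C}$ over which $A, B, \phi$ and the $G, G'$-actions are defined, and base-change to an algebraically closed field $\bold{k}$ of characteristic $p\gg 0$. By the good quantization hypothesis, there are Poisson identifications $Z(A_{\bold{k}})\cong \mathcal{O}(X_{\bold{k}})^p$ and $Z(B_{\bold{k}})\cong \mathcal{O}(Y_{\bold{k}})^p$ (up to sign), and the actions of $G$ and $G'$ on $A_\bold{k}, B_\bold{k}$ descend to the expected actions on these Frobenius-twisted coordinate rings.

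The crucial step is showing that $\phi_{\bold{k}}$ carries $Z((A_{\bold{k}})^G)$ into $Z((B_{\bold{k}})^{G'})$. This will follow a PI-degree comparison parallel to the one in the proof of Theorem \ref{center}: both $A_{\bold{k}}$ and $B_{\bold{k}}$ have PI-degree $p^{\dim(X)/2}$ on their Azumaya loci, and since a finite group of symplectic automorphisms has fixed-point locus of codimension $\geq 2$, the $G, G'$-actions are free on dense open subsets of those loci; hence the fixed subalgebras $(A_{\bold{k}})^G, (B_{\bold{k}})^{G'}$ share the same PI-degree on appropriate open subsets. Combined with $\dim X = \dim Y$ and the finite-module hypothesis, a non-central image of some $z\in Z((A_{\bold{k}})^G)$ would yield a simple $(B_{\bold{k}})^{G'}$-module of dimension larger than its PI-degree, as in Theorem \ref{center}, a contradiction. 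This produces an injective Poisson map $Z((A_{\bold{k}})^G) \hookrightarrow Z((B_{\bold{k}})^{G'})$, equivalently a finite dominant Poisson morphism of Frobenius-twisted spectra, which assembles across base changes and lifts to characteristic zero as a finite dominant Poisson morphism between $Y/G'$ and $X/G$.

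The remaining fundamental-group analysis is largely formal. Let $U^\circ\subset U$ and $V^\circ\subset V$ denote the free loci of the $G$- and $G'$-actions; because a symplectic action has fixed-point locus of codimension $\geq 2$, purity gives $\pi_1(U^\circ) = \pi_1(U)$ and $\pi_1(V^\circ) = \pi_1(V)$. The \'etale Galois covers $U^\circ \to U^\circ/G$ and $V^\circ \to V^\circ/G'$ give the short exact sequences $1\to \pi_1(U)\to W \to G\to 1$ and $1\to \pi_1(V)\to W'\to G'\to 1$ with $W := \pi_1(U^\circ/G)$ and $W' := \pi_1(V^\circ/G')$ of the required form. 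A finite dominant Poisson morphism between normal varieties whose smooth loci are symplectic of equal dimension is \'etale wherever both sides are smooth, so restricting the morphism above to the smooth free loci yields an \'etale cover and hence the containment of fundamental groups asserted in the theorem. In the isomorphism case the induced morphism on varieties is itself an isomorphism, forcing $W\cong W'$.

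The main obstacle will be the central-preservation step above, since $A^G, B^{G'}$ are not themselves good quantizations and Theorem \ref{center} does not apply directly; the PI-degree tracking must be done carefully on open subsets of the Azumaya locus of $A_{\bold{k}}$ where the $G$-action is free, using that taking invariants under a free Galois action preserves PI-degree. A secondary technical point is the identification of $\spec Z((A_{\bold{k}})^G)$ with the Frobenius twist of $X_{\bold{k}}/G$, which requires normality of the quotient and the codimension-$\geq 2$ bound on the union of the singular locus of $X$ and the non-free locus of the $G$-action.
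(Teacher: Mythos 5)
Your proposal is correct and follows essentially the same route as the paper: identify $Z(A_p^G)=Z(A_p)^G$, run the PI-degree argument of Theorem \ref{center} on the invariant algebras to get a finite Poisson morphism $Y/G'\to X/G$ after passing back to characteristic zero, then use the codimension-$\geq 2$ bound on the non-free locus, purity, and the resulting \'etale covering of quotients of the free smooth loci to produce the extensions $W, W'$ of $G, G'$ by the fundamental groups. Your explicit justification that the restricted morphism is \'etale (a finite Poisson map between equidimensional symplectic smooth loci is a local isomorphism) is exactly the point the paper asserts without comment, so the two arguments coincide.
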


\begin{proof}

Let $S\subset\mathbb{C}$ be a large enough finitely generated subring over which everything is defined, and let
$\phi:A_S^G\to B_S^{G'}$ be an $S$-algebra embedding so that $B_S^{G'}$ is a finite left $A_S^G$-module
via $\phi.$ Then just as in \cite{T3} it follows that after further localizing $S$ if necessary, for any base change $S\to\mathbb{F}_p$
we have that $Z(A_{\mathbb{F}_p})^G=Z(A_{\mathbb{F}_p}^G)$ and $Z(B_{\mathbb{F}_p})^{G'}=Z(B_{\mathbb{F}_p}^{G'}).$
Now, by mimicking the proof of Theorem \ref{center}, we obtain an embedding of $S^{\infty}$-Poisson algebras
$$Z^{\infty}(\phi):\mathcal{O}(X_{S^{\infty}})^G\to\mathcal{O}(Y_{S^{\infty}})^{G'},$$ such that $\mathcal{O}(Y_{S^{\infty}})^{G'}$
is a finite $\mathcal{O}(X_{S^{\infty}})^G$-module. Combining $Z^{\infty}(\phi)$ with a base change $S^{\infty}\to\mathbb{C}$,
we obtain a finite morphism of affine Poisson varieties $\theta: Y/{G'}\to X/G.$

Let $U, U'$ denote the regular loci of $X, Y.$
Let $U ^{reg}$ (respectively $U'^{reg}$) denote the open subset of the regular locus of $X$ (respectively $X'$) consisting of points
with trivial stabilizers in $G$ (respectively $G'$). Then the codimension of $U\setminus U^{reg}$ in $U$ (respectively $U'\setminus U'^{reg}$)
is at least 2, so $\pi_1(U^{reg})=\pi_1(U)$ and $\pi_1(U'^{reg})=\pi_1(U').$
Let $Y_1, Y_2$ denote $U^{reg}/G, U'^{reg}/G'$ respectively. Then $Y_2\to Y_1$ is a finite covering map and $\pi_1(Y_1)$ (resp. $\pi_1(Y_2)$)
is an extension of $G$ (resp. $G')$ with the quotient $\pi_1(U)$. Now, the desired result follows.


\end{proof}

\begin{proof}[Proof of Theorem\ref{rigidity}]
Let $G, W\leq \Aut(A)$ be such that $A^G\cong A^{W}.$
Let $U$ be the smooth locus of $X.$ By Theorem \ref{uber-rigidity}, there is a group $\Gamma$ that contains
both $G, W$ as normal subgroups such that $\Gamma/G\cong \pi_1(U)\cong \Gamma/W.$ Therefore, if $|\pi_1(U)|<\infty,$
then $|G|=|W|$, in particular $A$ is rigid. If in addition $\pi_1(U)$ is trivial, then $G\cong W$ and we are done.

\end{proof}

\begin{remark}
We remark that in many (perhaps in most) interesting cases the algebraic fundamental group of the smooth locus of $X$ is indeed finite, for example
this is always the case when $X$ has symplectic singularity by a theorem of Namikawa \cite{N}.

\end{remark}

\begin{proof}[Proof of Theorem \ref{enveloping}]

Let $\phi:U(\mathfrak{g})\to U(\mathfrak{g'})$ be a $\mathbb{C}$-algebra
embedding with $\mathfrak{g}, \mathfrak{g}'$ semi-simple and
$\dim \mathfrak{g}=\dim \mathfrak{g'}.$
It is well-known that the maximal Krull dimension of a commutative subalgebra
of $U(\mathfrak{g'})$ is at most  $\frac{1}{2}(\dim(\mathfrak{g'})+\text{rank}(\mathfrak{g'})).$
On the other hand, $U(\mathfrak{g})$ contains a commutatve subalgebra of dimension  
$\frac{1}{2}\dim(\mathfrak{g}+\text{rank}(\mathfrak{g}))$ as proved by Rybnikov
\cite{R}. So we may conclude
 that $\rank(\mathfrak{g})\leq \text{rank}(\mathfrak{g'}).$
We may also assume that $\phi, \mathfrak{g}, \mathfrak{g}'$ are defined over $S$-a large enough finitely generated subring of $\mathbb{C}.$
Put 
$$Z(U\mathfrak{g})=S[g_1,\cdots, g_n], \quad Z(U(\mathfrak{g'}))=S[g'_1,\cdots, g'_n],\quad n=\rank(\mathfrak{g})=\rank(\mathfrak{g'}).$$
Denote by $\phi_{\bold{k}}:U(\mathfrak{g}_{\bold{k}})\to U(\mathfrak{g'}_{\bold{k}})$
 the base change of $\phi$ to an algebraically closed field $\bold{k}$ of characteristic $p\gg 0.$ Next we claim that, just as in the proof of
 \ref{center}, we have that
$\phi_{\bold{k}}(Z(U\mathfrak{g}_{\bold{k}}))\subset Z(U(\mathfrak{g'}_{\bold{k}}))$
for all $p\gg 0.$
Indeed, it is well-known that the PI-degree of $U(\mathfrak{g}_{\bold{k}})$
equals to $p^{\frac{1}{2}(\dim \mathfrak{g}-\text{rank}(\mathfrak{g}))}.$
Thus 
$$\text{PI}-\deg(U(\mathfrak{g}_{\bold{k}}))\geq \text{PI}-\deg(U(\mathfrak{g'}_{\bold{k}})).$$
After this, the proof of Theorem \ref{center} carries over word by word to yeild 
$$\phi_{\bold{k}}(Z(U\mathfrak{g}_{\bold{k}}))\subset Z(U(\mathfrak{g'}_{\bold{k}})),\quad \rank(\mathfrak{g})=\rank(\mathfrak{g'}).$$
Next, we recall that by Veldkamp's theorem, we have
$$Z(U(\mathfrak{g'}_p))=Z_p(\mathfrak{g'}_p)[g'_1,\cdots, g'_n].$$
So, for $z\in S[g_1,\cdots, g_n],$ we have that $\phi_p(\bar{z})\in Z_p(\mathfrak{g'}_p)[g'_1,\cdots, g'_n]$ (for all $p\gg 0.)$
This implies that $\phi(z)\in S[g'_1,\cdots, g'_n]$, so $\phi(Z(U\mathfrak{g}))\subset Z(U(\mathfrak{g'})).$
Let $\chi: Z(U(\mathfrak{g}))\to \mathbb{C}$ be a character. Let $\chi':Z(U(\mathfrak{g'}))\to \mathbb{C}$ be any character such that 
$\phi(\ker(\chi))\subset\ker(\chi').$ Then we get the homomorphism $\phi:U_{\chi}(\mathfrak{g})\to U_{\chi'}(\mathfrak{g'}).$
Hence, it suffices to prove the second part of the theorem.

So, let $\phi:U_{\chi}(\mathfrak{g})\to U_{\chi'}(\mathfrak{g'})$ be a $\mathbb{C}$-algebra homomorphism.
Let $\mathcal{N}_{\mathfrak{g}}$, respectively $\mathcal{N}_{\mathfrak{g'}}$ denote the nilpotent cone
of $\mathfrak{g}^*$ (resp. $\mathfrak{g}'^*).$
As $U_{\chi}(\mathfrak{g})$ (respectively $U_{\chi'}(\mathfrak{g}')$) are good quantizations of
$\mathcal{O}(\mathcal{N}_{\mathfrak{g}})$ (resp. $\mathcal{O}(\mathcal{N}_{\mathfrak{g'}})$), it follows
from Theorem \ref{center} that there is a Poisson algebra homomorphism
$\psi: \mathcal{O}(\mathcal{N}_{\mathfrak{g}})\to \mathcal{O}(\mathcal{N}_{\mathfrak{g'}}).$
In particular, we get a Lie algebra homomorphism $\psi:\mathfrak{g}\to \mathcal{O}(\mathcal{N}_{\mathfrak{g'}}).$
Denote by $m$ the maximal ideal of $\mathcal{O}(\mathcal{N}_{\mathfrak{g'}})$ corresponding to the origin-the augmentation
ideal.
Since $[\mathfrak{g}, \mathfrak{g}]=\mathfrak{g}$, it follows that $\psi(\mathfrak{g})\subset m.$
 We may identify $m/m^2$ with $\mathfrak{g'}.$ Then we have a homomorphism of Lie algebras
$$\bar{\psi}:\mathfrak{g}\to m/m^2=\mathfrak{g'}.$$ Let $I=\ker(\bar{\psi}).$ Thus $[I, I]=I.$
Let $\psi(I)\subset m^n$ for $n>1.$ We have
$$\psi(I)=\psi([I, I])\subset \lbrace m^n, m^n\rbrace\subset m^{2n-1}.$$
In particular $I\subset m^{n+1}.$ Arguing by induction on $n,$ we conclude that 
 $I=0.$ Hence $\mathfrak{g}$ is isomorphic to a Lie subalgebra of $\mathfrak{g}'$ and we are done.

\end{proof}

\section{Cherednik algebras as fixed rings}

Given a simple domain $B$ over $\mathbb{C}$, it is an interesting and natural problem to classify
finite groups $\Gamma$ for which there exists a domain $R$ on which $\Gamma$ acts faithfully via $\mathbb{C}$-algebra automorphisms,
such that $B=R^{\Gamma}.$  Given the direct analogy with Galois theory, 
we refer to this question as the inverse Galois problem for $B.$
In \cite{T3} we solved this problem
for rings of differential operators on smooth affine varieties. Namely,
 if $D(X)=R^{\Gamma},$
where $X$ is a smooth affine variety and $\Gamma$ is a finite group of $\mathbb{C}$-automorphisms of a domain $R,$
then there exists a smooth affine variety $Y$ such that $R\cong D(Y)$ and $Y\to X$ is a $\Gamma$-Galois etale covering of $X$
[\cite{T3}, Theorem 1]. In particular, in the spirit of Galois theory, there is a bijection between
normal subgroups of $\Gamma$ and $\Gamma$-invariant subalgebras of $R$ containing $D(X).$
It was also shown in [\cite{T3}, Theorem 2] that very generic central quotients of enveloping algebras of semi-simple Lie algebras
cannot be nontrivial fixed rings. In this section we apply the methodology of \cite{T3} to the case when $B$ is a (simple)
 spherical subalgebra of a rational Cherednik algebra defined by Etingof and Ginzburg \cite{EG}. Let us recall their definition.

 Let $W$ be a complex reflection group, $\mathfrak{h}$ its reflection representation
and $S\subset W$  the set of all complex reflections.
Let $(\, ,\,):\mathfrak{h}\times \mathfrak{h}^*\to \mathbb{C}$ be the natural pairing. Given a reflection $s\in S,$
let $\alpha_s\in \mathfrak{h}^*$ be an eigenvector of $s$ for eigenvalue $1$.
Also, let $\alpha_s^{\vee} \in \mathfrak{h}$ be an eigenvector normalized so that $\alpha_s(\alpha_s^{\vee} )=2.$
Let $c:S\to \mathbb{C}$ be a function invariant with respect to conjugation with $W.$ 
The rational Cherednik algebra $H_{c}$ associated to $(W, \mathfrak{h})$ with parameter
$c$ is defined as the quotient of $\mathbb{C}[W]\ltimes T( \mathfrak{h}\oplus \mathfrak{h^*})$ by the following relations
$$
[x, y]=(y,x)-\sum_{s\in S}c(s)(y,\alpha_s)(\alpha_s^{\vee} , x),\quad [x, x']=0=[y,y']
$$
for all $x, x'\in \mathfrak{h}^*$ and $y, y'\in \mathfrak{h}.$

We are concerned with the spherical subalgebra $B_c$ of the Cherednik algebra $H_c.$
Recall that  $$B_c=eH_ce, e=\frac{1}{|W|}\sum_{g\in W}g.$$
For $c=0$, we have that $B_0=D(\mathfrak{h})^W.$

Next, we recall a couple of basic properties of spherical subalgebras of rational Cherednik algebras.
Namely the PBW property and the Dunkl isomorphism.

The crucial PBW property of $H_c, B_c,$ implies that if we equip $H_c, B_c,$
with an algebra filtration by putting 
$$deg(\mathfrak{h})=1,\quad deg(\mathfrak{h^*})=1,\quad deg(W)=0,$$ then
$$\Gr(H_c)=\mathbb{C}[W]\ltimes \Sym(\mathfrak{h}\oplus \mathfrak{h^*}), \quad \Gr(B_c)=\Sym(\mathfrak{h}\oplus \mathfrak{h^*})^{W}.$$
Recall that since for any nonzero $f\in \Sym(\mathfrak{h}^*), \text{ad}(f)=[f, -]$ acts locally nilpotently on $H_c,$
we may consider the localization $H_c[f^{-1}]$ (and $B_c[f^{-1}]$ for $f\in  \mathbb{C}[\mathfrak{h}]^W$).
Then we have the induced filtration on $B_c[f^{-1}]$ and 
$$\Gr(B_c[f^{-1}])=\Sym(\mathfrak{h}\oplus \mathfrak{h^*})^{W}_f.$$

Set $\mathfrak{h}^{reg}=\lbrace x\in \mathfrak{h}, (x, \alpha)\neq 0, \alpha\in S\rbrace.$
Let $\delta \in \mathbb{C}[h]^W$ be the defining function of  $\mathfrak{h}\setminus\mathfrak{h}^{reg}.$ 
 Recall that via the Dunkl embedding we have an isomorphism 
 $$B_c[\delta^{-1}]\cong D(\mathfrak{h}^{\text{reg}}).$$


The next theorem is the main result of this section. It relates the inverse Galois problem for $B_c$
to the geometry of the center of reduction of $B_c$ modulo a large prime. 
In fact, it applies to a wider class of algebras to be defined below.

\begin{assumption}\label{assumption}

Let $A$ be an affine  Noetherian $\mathbb{C}$-domain, let
$V\subset A$ be a finite dimensional $\mathbb{C}$-subspace, such that the following holds.
For any $g\in V$, the adjoint action $\ad(g)$ is locally nilpotent and there exists $0\neq\delta\in V$ so that
$A_{\delta}$ (the localization of $A$ with respect to $\delta$) can be identified with $D(X),$ where $X$ is a smooth affine variety over $\mathbb{C}$
and $V\subset\mathcal{O}(X).$ Moreover, 
there exists a finitely generated subring $S\subset\mathbb{C}$ and an $S$-model $A_S$ of $A$, such that
$A_{\bold{k}}$ is a finite module over its center for any
base change $S\to \bold{k}$ for any field $\bold{k}$ of large enough characteristic. 

\end{assumption}

Next, we define Harish-Chandra bimodules over algebras satisfying the above assumption.
We also recall the definition of Harish-Chandra bimodules over spherical subalgebras of rational Cherednik algebras.

\begin{defin}

Let $A$ be an algebra from Assumption \ref{assumption}. Then an $A$-bimodule $M$ is said to be a Harish-Chandra bimodule if
for any $g\in V,$ the adjoint action $\ad(g)|_M$ is locally nilpotent.
Let $B_c$ be a spherical subalgebra associated to $(W, \mathfrak{h}).$
Let $M$ be a bimodule over $B_c$. Then $M$ is said to be a Harish-Chandra bimudule if $\ad(x)$ is locally nilpotent on $M$
for any $x\in\mathbb{C}[\mathfrak{h}]^W, \mathbb{C}[\mathfrak{h^*}]^W.$

\end{defin}

\begin{theorem}\label{HC}

Let $B$ be a $\mathbb{C}$-algebra satisfying Assumption \ref{assumption}. Suppose that $B$ is simple.
Let $B=R^{\Gamma}$, where $R$ is a $\mathbb{C}$-domain and $\Gamma$ is a finite subgroup of $\mathbb{C}$-algebra
automorphisms of $R.$
 Then $R$ is a Harish-Chandra $B$-bimodule and satisfies Assumption \ref{assumption}.
There exists a finitely generated ring $S\subset\mathbb{C} $ over which everything is defined, such that the following holds.
For any base change $S\to\bold{k}$ to an algebraically closed field of positive characteristic
the group $\Gamma$
is a quotient of the etale fundamental group of the Azumaya locus of
$\spec Z(B_{\bold{k}}).$ 

\end{theorem}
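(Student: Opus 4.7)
The plan is to split the argument into (i) showing that $R$ is a Harish-Chandra $B$-bimodule satisfying Assumption \ref{assumption}, and (ii) producing, after reduction modulo a large prime, a connected \'etale $\Gamma$-Galois cover of the Azumaya locus whose Galois group realizes $\Gamma$ as the desired quotient of the \'etale fundamental group. For (i), I would first note that $R$ is finite over $B$: since $|\Gamma|<\infty$ and $B=R^\Gamma$, the Bergman--Isaacs theorem (or the explicit integrality relations $\prod_{\gamma\in\Gamma}(t-\gamma(r))$) makes $R$ a finite $B$-module. To obtain the Harish-Chandra property, fix $g\in V\subset B$. Since $g$ is $\Gamma$-invariant, the commuting endomorphisms $L_g,R_g$ of $R$ commute with the $\Gamma$-action, and $L_g-R_g=\ad(g)$ is locally nilpotent on $B$ by hypothesis; a weight-decomposition argument using the finite $B$-bimodule structure of $R$ then forces $\ad(g)$ to be locally nilpotent on all of $R$. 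To verify Assumption \ref{assumption} for $R$, I would replace $\delta\in V$ by the $\Gamma$-invariant product $\delta':=\prod_{\gamma\in\Gamma}\gamma(\delta)\in V^\Gamma\subset B$, so that $B_{\delta'}=D(X')$ for a smooth affine $X'$ and $\Gamma$ acts on $R_{\delta'}$ with fixed ring $D(X')$; \cite[Theorem 1]{T3} then gives $R_{\delta'}\cong D(Y)$ for a smooth affine $Y$ with $Y\to X'$ a $\Gamma$-Galois \'etale cover. Finiteness of $R_{\bold{k}}$ over its center for $p\gg 0$ is inherited from that of $B_{\bold{k}}$ via the finiteness of $R$ over $B$.

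For step (ii), choose a finitely generated $S\subset\mathbb{C}$ over which all data are defined and work with $p\gg 0$ coprime to $|\Gamma|$ and a base change $S\to\bold{k}$. I would first establish the centralizer identity $Z(R_{\bold{k}})^\Gamma=Z(B_{\bold{k}})$: the inclusion $\subseteq$ is immediate, and for the reverse, any $z\in Z(B_{\bold{k}})$ is $\Gamma$-fixed and commutes with all of $R_{\bold{k}}$ by a trace argument exploiting the finite extension $B_{\bold{k}}\subset R_{\bold{k}}$. Thus $\pi\colon\spec Z(R_{\bold{k}})\to\spec Z(B_{\bold{k}})$ is the geometric $\Gamma$-quotient. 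Let $U\subset\spec Z(B_{\bold{k}})$ denote the Azumaya locus of $B_{\bold{k}}$ and set $V=\pi^{-1}(U)$. The central claim is that $V\to U$ is a connected \'etale $\Gamma$-Galois cover. Connectedness follows from $R_{\bold{k}}$ being a domain. For $\Gamma$-freeness on $V$: a nontrivial stabilizer of a closed point would descend to a nontrivial automorphism of the Azumaya fiber of $B_{\bold{k}}$ at the image point, contradicting the faithful action of $\Gamma$ on $R_{\bold{k}}$. \'Etaleness is then automatic for a free action of a finite group of order prime to $p$ on a normal scheme. The Galois correspondence yields the desired surjection $\pi_1^{\text{\'et}}(U)\twoheadrightarrow\Gamma$.

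The hardest step I expect will be the $\Gamma$-freeness on $V$ together with the centralizer identity $Z(R_{\bold{k}})^\Gamma=Z(B_{\bold{k}})$: both rely on carefully transferring data across the Azumaya locus between $B_{\bold{k}}$- and $R_{\bold{k}}$-representations. The other ingredients (local nilpotence of $\ad(g)$ on $R$, identification of $R_{\delta'}$ with a ring of differential operators) are technical but closely parallel the reasoning of \cite{T3}.
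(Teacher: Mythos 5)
Your global strategy---reduce mod $p$ and realize $\spec Z(R_{\bold{k}})$ over the Azumaya locus as a connected \'etale $\Gamma$-Galois cover---is indeed the paper's strategy, but the engine that makes each step run is missing: the Morita equivalence between $B$ and the skew group algebra $\mathbb{C}[\Gamma]\ltimes R$, which the paper extracts from the simplicity of $B$ together with standard fixed-ring facts \cite{M} (via [\cite{T3}, Lemma 4]). That equivalence is what exhibits $R$ as a finitely generated projective (hence torsion-free) $B$-module, and, after base change and inverting an $f$ cutting out the complement of the Azumaya locus, it makes $\bold{k}[\Gamma]\ltimes (R_{\bold{k}})_f$ Morita equivalent to the Azumaya algebra $(B_{\bold{k}})_f$; this is what forces $(R_{\bold{k}})_f\cong B_{\bold{k}}\otimes_{Z(B_{\bold{k}})}Z(R_{\bold{k}})_f$, the freeness of the $\Gamma$-action on the preimage of the Azumaya locus, and the \'etaleness of the covering, as in [\cite{T3}, Proposition 1]. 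Your substitutes fail: in a noncommutative ring $\prod_{\gamma\in\Gamma}\bigl(t-\gamma(r)\bigr)$ does not have $\Gamma$-invariant coefficients, and Bergman--Isaacs is a statement about nilpotence, not module-finiteness, so finiteness of $R$ over $B$ is not established by what you cite; and the freeness argument (``a nontrivial stabilizer would contradict faithfulness'') is not valid, since a faithful action of a finite group can perfectly well fix closed points (e.g.\ $\mathbb{Z}/2$ acting on $\mathbb{C}[x]$ by $x\mapsto -x$). Likewise, connectedness of the cover requires knowing that $R_{\bold{k}}$ is a domain for $p\gg 0$, which does not follow from $R$ being a domain in characteristic $0$; the paper proves it by first identifying $R[\delta^{-1}]\cong D(Z)$ via [\cite{T3}, Theorem 1] and then reducing mod $p$.

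The Harish-Chandra property also cannot be obtained by your soft ``weight-decomposition'' argument: local nilpotence of $\ad(g)$ on $B$ together with $R$ being a finite $B$-module does not imply local nilpotence on $R$. For instance, if $R$ is generated by $x,y$ with $xy=-yx$ and $\Gamma=\mathbb{Z}/2$ acts by $y\mapsto -y$, then $B=R^{\Gamma}=\mathbb{C}[x,y^{2}]$ is commutative, so $\ad(g)|_{B}=0$ for every $g$, yet $\ad(x)^{n}(y)=2^{n}x^{n}y\neq 0$; so the conclusion genuinely uses the simplicity of $B$ and the differential-operator picture, not just finiteness. The paper proves it by reduction mod $p$: a monic relation $z^{l}+\sum_{i<l}a_{i}z^{i}=0$ over $B$, the identification of the localization of $R_{\bold{k}}$ at $\delta$ with differential operators on a $\Gamma$-Galois cover of $X_{\bold{k}}$, and the resulting uniform bound on the order of $z$ as a differential operator give $\ad(g)^{m}(z)=0$ for a single $m$ and all $p\gg 0$. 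Note also a circularity in your ordering: you invoke $R_{\delta'}\cong D(Y)$ already in step (i), but forming the localization $R[\delta^{-1}]$ (and identifying its invariants with $B_{\delta}=D(X)$) presupposes that $\ad(\delta)$ acts locally nilpotently on $R$---precisely the Harish-Chandra property you are in the middle of proving.
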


\begin{proof}
Suffices to show this when $\Gamma$ is simple.
Since $B$ is a simple Noetherian domain such that its localization is isomorphic to the ring of differential operators
on a smooth affine variety, it follows
 that $Z(B)=\mathbb{C}.$ Now, using some
  standard facts about fixed rings \cite{M}, we can deduce that $B$ is Morita equivalent to the skew ring $\mathbb{C}[\Gamma]\ltimes R$ (see [\cite{T3}, Lemma 4]).
There exists a large enough finitely generated ring $S\subset\mathbb{C}$,
and models of $B, R$ over $S$, to be denoted by $B_S, R_S,$
 so that $B_S$ is Morita equivalent
to $S[\Gamma]\ltimes R_S.$ In particular, $R_S$ is a projective left (and right) $B_S$-module. So for large enough $p\gg 0$ and a base change $S\to \bf{k}$ to an algebraically
closed field of characteristic $p$, we have that $B_{\bf{k}}$ is Morita equivalent to
$\bold{k}[\Gamma]\ltimes R_{\bf{k}}.$ We also remark that $R_{\bf{k}}$ is a torsion free $B_{\bf{k}}$-module.

Next, we claim that $Z(R_{\bold{k}})^{\Gamma}=Z(B_{\bold{k}}).$ Indeed, since $Z(R_{\bold{k}})^{\Gamma}\subset B_{\bold{k}}\cap Z(R_{\bold{k}}),$ it follows that $Z(R_{\bold{k}})^{\Gamma}\subset Z(B_{\bold{k}}).$
On the other hand, since $Z(B_{\bold{k}})=Z(\bold{k}[\Gamma]\ltimes R_{\bf{k}})$ (by the above mentioned Morita equivalence),
it follows that $Z(B_{\bold{k}})$ commutes with $R_{\bold{k}}.$ Since $Z(B_{\bold{k}})\subset R_{\bold{k}}^{\Gamma}$, we conclude that
$Z(B_{\bold{k}})\subset Z(R_{\bold{k}})^{\Gamma}$ and we are done.
 
By our assumptions $B_{\bold{k}}$ is finite over its center, and $Z(B_{\bold{k}})$ is a domain. 
 Let $f\in Z(B_{\bold{k}})$ be a nonzero element such that it vanishes on complement of the Azumaya locus of
$\spec(Z(B_{\bold{k}})).$ As $f$ is also central in $R_{\bold{k}}$, we may localize $B_{\bold{k}}, R_{\bold{k}}$
at $f$ to be denoted respectively by $(B_{\bold{k}})_f, (R_{\bold{k}})_{f}.$

We have that
$(B_{\bold{k}})_f$ is an Azumaya algebra over $Z(B_{\bold{k}})_f$ and $(B_{\bold{k}})_f$
is Morita equivalent to $\bold{k}[\Gamma]\ltimes (R_{\bf{k}})_f.$
Then just as in [\cite{T3}, Proposition 1], we can conclude that 
$\spec Z(R_{\bold{k}})_f\to \spec Z(B_k)_f$ is
a $\Gamma$-Galois etale covering and
$$(R_{\bold{k}})_f=B_{\bold{k}}\otimes_{Z(B_{\bold{k}})} Z(R)_f.$$
Let $U$ denote the Azumaya locus of $\spec(Z(B_{\bold{k}})),$ and $Y$ denote
the preimage of $U$ under the projection $\spec(Z(R_{\bold{k}}))\to \spec Z(B_{\bold{k}}),$
then $Y\to U$ is $\Gamma$-Galois covering.
In particular, for any $g\in V_{\bold{k}}$, $\text{ad}(g)$ acts locally nilpotently on $(R_{\bold{k}})_f.$ Therefore $\text{ad}(g)$ acts locally
nilpotently on $R_{\bold{k}}$ as $R_{\bold{k}}$ is $Z(B_{\bold{k}})$-torsion free (since $R_{\bold{k}}$ is projective over $B_{\bold{k}}$).
Now it follows that if $R_{\bold{k}}$ is a domain (which is proved in the next paragraph), then $\Gamma$ must be a quotient of the etale fundamental group
of the smooth locus of $\spec(Z(B_{\bold{k}}))=U.$ 

Next, we show that $R$ is a Harish-Chandra $B$-bimodule and $R_{\bold{k}}$ is a domain, which completes the proof.
Let $g\in V, z\in R.$ We want to show that $\ad(g)^m(z)=0$ for some $m.$
It suffices to check that there exists $m$, such that $\ad(g)^m(z)=0$ in $R_{\bold{k}}$ for all base changes $\bold{k}$ of large enough
characteristic. Let $$z^l +\sum_{i<l}{a_i}z^i=0,\quad a_i\in B.$$  Recall that by the assumption,  we may identify $B_{\delta}$ with
$D(X)$. Denote by $R'_{\bold{k}}$ the localization of $R_{\bold{k}}$ with respect to $\delta.$
Let $m$ be the largest of orders of $a_i$ viewed as differential operators in $D(X).$ 
We have that $\bold{k}[\Gamma]\ltimes R'_{\bf{k}}$ is Morita equivalent to $D(X_{\bold{k}}).$
Then by [\cite{T3}, Theorem 1], there exists a $\Gamma$-Galois covering $Y\to X_{\bold{k}}$
such that $R'_{\bold{k}}\cong D(Y).$ 
Since the images of $a_i$ in $R'_{\bold{k}}\cong D(Y)$ are differential operators of order at most $m$,
it follows easily from the above equality that the degree of $z$ viewed as a differential operator in $D(Y)$ is at most $m.$
So, $\ad(g)^m(z)=0$ in $R_{\bold{k}}$ as desired. Thus, $R$ is a Harish-Chandra $B$-bimodule.

Finally,
it follows that $ad(\delta)$ acts locally nilpotently on $R.$ 
Let $R'=R[\delta^{-1}].$  Since $R'^{\Gamma}=D(X)$
it follows from [\cite{T3}, Theorem 1] that $R'\cong D(Z)$ for some smooth affine variety $Z$ equipped
with a $\Gamma$-Galois covering $Z\to X.$
Hence $R'_{\bf{k}}$ is a domain for $\text{char}(\bold{k})\gg 0$, as desired.

\end{proof}

 Next, we apply Theorem \ref{HC} to classes of Cherednik algebras for which the center of their reduction modulo
$p>0$ is well-known and (relatively) easy to describe. Namely, we consider two families of  spherical subalgebra
of the rational Cherednik algebras: one  associated to the pair $(S_n, \mathbb{C}^n)$ and a parameter $c\in\mathbb{C},$
the other is noncommutative deformations of Kleinian singularities of type A.

\begin{theorem}\label{Sn}

Let $B_c$ be the spherical subalgebra of a rational Cherednik algebra associated with $(S_n, \mathbb{C}^n)$ with a parameter $c\in\mathbb{C}.$
Assume that $B_c$ is simple.
If $c$ is irrational, then $B_c$ cannot be a fixed ring of a domain under a nontrivial  action of a finite
group of ring automorphisms. For  rational $c$, if $B_c=R^\Gamma$ with finite group $\Gamma$ and domain $R,$
then $\Gamma$ must be a quotient of $S_n.$

\end{theorem}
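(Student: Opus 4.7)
My approach is to apply Theorem \ref{HC} to $B_c$, and then identify the étale fundamental group of the Azumaya locus of $\spec Z(B_{c,\bold{k}})$ by invoking the Bezrukavnikov--Finkelberg--Ginzburg (BFG) description of the center of the type-A rational Cherednik algebra in positive characteristic.

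First I would verify Assumption \ref{assumption} for $B_c$. It is an affine Noetherian $\mathbb{C}$-domain, and one can take $V \subset \mathbb{C}[\mathfrak{h}]^{S_n} + \mathbb{C}[\mathfrak{h}^*]^{S_n}$ to be a finite-dimensional subspace containing the discriminant $\delta$; the adjoint action $\ad(g)$ is then locally nilpotent on $B_c$ for every $g \in V$ by the filtration argument already noted in the text. The Dunkl embedding identifies $B_c[\delta^{-1}]$ with $D(\mathfrak{h}^{\mathrm{reg}})^{S_n}=D(\mathfrak{h}^{\mathrm{reg}}/S_n)$, with $\mathfrak{h}^{\mathrm{reg}}/S_n$ a smooth affine variety containing $V$ as regular functions. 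Finiteness of $B_{c,\bold{k}}$ over its center for large primes is provided by BFG.

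Theorem \ref{HC} then asserts that whenever $B_c=R^{\Gamma}$ for a finite group $\Gamma$, the group $\Gamma$ is a quotient of the étale fundamental group $\pi_1^{\mathrm{et}}(U_{\bold{k}})$ of the Azumaya locus $U_{\bold{k}}$ of $\spec Z(B_{c,\bold{k}})$, for a suitable base change $S\to \bold{k}$ of large characteristic $p$. The case split is then driven by a calibrated choice of this base change combined with the BFG structure theorem. For irrational $c$, I would choose the base change so that $\bar c\in \bold{k}$ is transcendental over the prime field; by BFG, such $\bar c$ is Azumaya-generic, so $\spec Z(B_{c,\bold{k}})\cong \text{Hilb}^n(\bold{k}^2)$, which is smooth and simply connected as a crepant resolution of $\bold{k}^{2n}/S_n$, forcing $\pi_1^{\mathrm{et}}(U_{\bold{k}})=1$ and hence $\Gamma=1$. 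For rational $c$, the only regime that can produce a nontrivial $\Gamma$ is when $\bar c$ is resonant; BFG then realizes $\spec Z(B_{c,\bold{k}})$ as a finite cover of $\text{Sym}^n(\bold{k}^2)=\bold{k}^{2n}/S_n$ whose Azumaya locus lies over the free $S_n$-locus. The preimage in $\bold{k}^{2n}$ is an $S_n$-Galois étale cover whose total space is the complement of a codimension-two subset in $\bold{k}^{2n}$, and is therefore étale simply connected once $p\nmid n!$. Consequently $\pi_1^{\mathrm{et}}(U_{\bold{k}})$ is a quotient of $S_n$, and so is $\Gamma$.

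The main technical obstacle is invoking BFG in exactly the right form in each regime: that transcendental $\bar c$ always realizes the Hilbert-scheme center while resonant rational $\bar c$ realizes (a finite cover of) $\text{Sym}^n(\bold{k}^2)$. Along with this, I would need to verify the positive-characteristic fundamental group computations, namely that $\text{Hilb}^n(\bold{k}^2)$ is algebraically simply connected for large $p$ and that the big-diagonal complement in $\bold{k}^{2n}$ is étale simply connected once $p$ avoids $n!$ and a finite list of small primes arising from the BFG reduction.
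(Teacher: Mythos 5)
Your overall strategy is the paper's: apply Theorem \ref{HC} and then compute the relevant \'etale fundamental group from the Bezrukavnikov--Finkelberg--Ginzburg description of the center in characteristic $p$. Your treatment of the rational case is essentially the paper's argument (the paper gets $S_n$ as the prime-to-$p$ fundamental group of the smooth locus of $(\mathfrak{h}\oplus\mathfrak{h}^*)/S_n$ by lifting to characteristic $0$ via Remark \ref{lifting}, while you invoke purity directly in characteristic $p$; either works, but only for the \emph{prime-to-$p$} quotient --- the complement of a codimension-two set in $\mathbb{A}^{2n}$ is never \'etale simply connected in characteristic $p$ because of Artin--Schreier covers, and the reduction to the prime-to-$p$ part uses $p>|\Gamma|$, not $p\nmid n!$).

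The irrational case, however, has a genuine gap. You propose to choose a base change $S\to \mathbf{k}$ with $\bar c$ transcendental over the prime field. If $c$ is irrational but \emph{algebraic} (say $c=\sqrt{2}$), no such base change exists: $\bar c$ satisfies the reduction of an integral equation for $c$ and is therefore algebraic over $\mathbb{F}_p$. The condition actually needed to be in the BFG ``generic'' regime is only $\bar c\notin\mathbb{F}_p$, and producing infinitely many reductions with this property for an arbitrary irrational $c$ is a nontrivial arithmetic input; this is precisely Lemma \ref{cheb} (a Chebotarev density argument) in the paper, which your proposal omits and cannot bypass. In addition, in that regime $\spec Z(B_{\bar c})$ is the (Frobenius twist of the) Calogero--Moser space $\mathrm{CM}_n$, not $\mathrm{Hilb}^n(\mathbf{k}^2)$: the Hilbert scheme is projective over $\mathrm{Sym}^n(\mathbf{k}^2)$ and not affine, so it cannot be the spectrum of the center of an affine algebra. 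The Hilbert scheme enters only indirectly: over $\mathbb{C}$, $\mathrm{CM}_n$ is homeomorphic to $\mathrm{Hilb}^n(\mathbb{C}^2)$ and hence simply connected, and this is transferred to the prime-to-$p$ \'etale fundamental group in characteristic $p$ by Remark \ref{lifting}, using that $\mathrm{CM}_n$ is a smooth affine simply connected lift of its reduction. Your plan to prove directly that the characteristic-$p$ variety is ``algebraically simply connected'' concerns the wrong variety and, as stated, is false unless restricted to the prime-to-$p$ quotient; some lifting or tameness argument such as Remark \ref{lifting} is needed in any case.
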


To use Theorem \ref{HC}, we need to know the $p'$-part of the etale fundamental group
of the smooth locus of $\spec(Z(B_{\bar{c}})).$ For this purpose we utilize the following.

\begin{remark}\label{lifting}
Let $X$ be a complete smooth variety over an algebraically closed field $\bf{k}$ of characteristic
$p,$ and $U\subset X$ be an open subset such that $X\setminus U$ is a divisor with normal crossings
in $X.$ Let $\tilde{X}$ be a complete smooth lift of $X$ over $W(\bold{k})$ ($W(\bold{k})$ is the ring of Witt vectors over
$\bold{k}$), $\tilde{U}\subset \tilde{X}$
be an open subset lifting $U$, such that $\tilde{X}\setminus\tilde{U}$ is a divisor with normal crossings over $W(\bold{k})$.
Then any $p'$-degree Galois covering of $U$ admits a lift to a Galois covering of $\tilde{X}$ [\cite{LO}, Corollary A.12], which yields
that any $p'$-quotient of the etale fundamental group of $U$ must be a quotient of the fundamental group of
$U_{\mathbb{C}}.$
 
\end{remark}

We need the following corollary of the Chebotarev density theorem.
It contains slightly more than [\cite{VWW}, Theorem 1.1]. We present a short proof for a reader's convenience.

\begin{lemma}\label{cheb}

Let $S$ be a finitely generated domain containing $\mathbb{Z}$ and $c\in S.$
Then there are infinitely many primes $p$ and ring homomorphisms $\phi_p:S\to \mathbb{F}_p.$
If $c\notin \mathbb{Q}$ then there exists infinitely many primes $p$ and homomorphisms
$\phi_p:S\to F_q$, so that $\phi_p(c)\notin \mathbb{F}_p$ and $q$ is a power of $p.$
\end{lemma}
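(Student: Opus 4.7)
The plan is to combine a Noether-normalization setup over $\mathbb{Z}$ with the classical Chebotarev density theorem. First I set up notation: let $K = \mathrm{Frac}(S)$ and let $L \subset K$ be the algebraic closure of $\mathbb{Q}$ in $K$; since $S$ is a finitely generated $\mathbb{Z}$-algebra, $L$ is a number field. Choose a transcendence basis $t_1,\ldots,t_d$ of $K$ over $L$ and, by the primitive element theorem, a $y\in K$ satisfying an irreducible polynomial $f\in L(t_1,\ldots,t_d)[y]$ with $K = L(t_1,\ldots,t_d)(y)$. After inverting a single nonzero element of $S$ (harmless, since we only need the existence of homomorphisms), I may arrange
\[
S \;=\; \mathcal{O}_L[1/N][t_1,\ldots,t_d,y]/(f(y)),
\]
with $f$ of unit leading coefficient and invertible discriminant, so $\spec(S)$ is finite étale over $\spec(\mathcal{O}_L[1/N][t_1,\ldots,t_d])$.

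For the first assertion, Chebotarev yields infinitely many rational primes $p$ splitting completely in $L/\mathbb{Q}$; for such $p \gg 0$, fix a quotient $\mathcal{O}_L \twoheadrightarrow \mathbb{F}_p$. The reduction $\bar f \in \mathbb{F}_p[t_1,\ldots,t_d,y]$ cuts out a geometrically integral affine variety of dimension $d$, which by Lang--Weil has an $\mathbb{F}_p$-rational point off the discriminant locus for all sufficiently large $p$. Such a point produces the desired $\phi_p\colon S \to \mathbb{F}_p$.

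For the second assertion, I split on whether $c$ is algebraic or transcendental over $\mathbb{Q}$. If $c$ is algebraic, then $c\in L\setminus \mathbb{Q}$ and $\mathbb{Q}(c)/\mathbb{Q}$ is a nontrivial extension; by Chebotarev applied to the Galois closure of $L/\mathbb{Q}$, a positive density of primes $p$ admit a prime $\mathfrak{p}\mid p$ in $\mathcal{O}_L$ whose Frobenius acts nontrivially on $c$, so that $\mathcal{O}_L/\mathfrak{p} = \mathbb{F}_{p^e}$ with $e>1$ and $\bar c \in \mathbb{F}_{p^e}\setminus \mathbb{F}_p$. I then specialize the $t_i$ to values in $\mathbb{F}_{p^e}$ avoiding the discriminant locus and adjoin a root of $\bar f$ in some further extension $\mathbb{F}_{p^{eg}}$, obtaining $\phi_p\colon S \to \mathbb{F}_{p^{eg}}$ with $\phi_p(c)\in \mathbb{F}_{p^e}\setminus \mathbb{F}_p$. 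If instead $c$ is transcendental over $\mathbb{Q}$, then $c$ is automatically transcendental over $L$ (as $L/\mathbb{Q}$ is algebraic), so I may take $t_1 = c$; for any prime $p$ as in the first part and any $\alpha \in \mathbb{F}_{p^2}\setminus \mathbb{F}_p$, specializing $t_1\mapsto \alpha$, the other $t_i$ arbitrarily in $\mathbb{F}_{p^2}$, and adjoining a root of $\bar f$ yields the required $\phi_p$ with $\phi_p(c) = \alpha \notin \mathbb{F}_p$.

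The main obstacle is the existence of the prescribed specializations avoiding the discriminant locus; this is standard once geometric integrality of the reductions is established and $p$ is sufficiently large (via Lang--Weil). The real content in the algebraic-$c$ case is Chebotarev, which both furnishes the homomorphisms and, by choice of Frobenius conjugacy class, guarantees that $c$ lands outside the prime field.
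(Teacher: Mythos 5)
Your proof is correct, but it follows a genuinely different route from the paper's. The paper runs everything through Noether normalization and lifting of primes: for the first assertion it localizes $S$, specializes the algebraically independent elements (a prime lying over $(x_1,\dots,x_n)$, via going-up), reduces to a ring finite over a localization of $\mathbb{Z}$, and applies Chebotarev to the resulting number ring; for the second assertion it works downstairs in the subring $\mathbb{Z}[c]\subset S$, uses that the image of $\spec(S)\to\spec\mathbb{Z}[c]$ contains a nonempty open set, finds primes of $\mathbb{Z}[c]$ with residue field $\mathbb{F}_q\supsetneq\mathbb{F}_p$ containing the image of $c$ (by Chebotarev when $c$ is algebraic, by choosing a nonlinear irreducible polynomial mod $p$ when $c$ is transcendental), and lifts them to $S$, finishing with the Nullstellensatz. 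You instead build a global presentation: after one localization, $S$ becomes finite (étale) over a polynomial ring over $\mathcal{O}_L[1/N]$, where $L$ is the algebraic closure of $\mathbb{Q}$ in $\mathrm{Frac}(S)$, and you produce the homomorphisms by specializing the variables, using Chebotarev in $L$ (splitting $p$, or forcing Frobenius to move $c$) plus Lang--Weil to find rational points; your transcendental case ($t_1=c\mapsto\alpha\in\mathbb{F}_{p^2}\setminus\mathbb{F}_p$) is morally the same as the paper's irreducible-polynomial trick. The number-theoretic input (Chebotarev detecting $c\notin\mathbb{Q}$) is identical; the trade-off is that the paper's argument is more elementary---it never needs geometric integrality of special fibers or point counts, since it specializes inside $S$ and lifts primes along a dominant map---whereas your argument is heavier (Lang--Weil) but more uniform and gives explicit control of the residue fields. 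The one step you should make explicit is why the reductions of your presentation are geometrically integral: this is exactly where the choice of $L$ as the \emph{full} algebraic closure of $\mathbb{Q}$ in $K=\mathrm{Frac}(S)$ matters, since in characteristic zero $K/L$ is then a regular extension, so the generic fiber over $\spec\mathcal{O}_L[1/N]$ is geometrically integral and, by spreading out, so are almost all closed fibers; similarly, "specialize the remaining $t_i$ arbitrarily" should read "avoiding the bad hypersurface," which is possible for $p\gg 0$ since its degree is bounded independently of $p$.
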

\begin{proof}
By the Noether normalization theorem, there exists $l\in \mathbb{N}$ and
algebraically independent $x_1,\cdots, x_n\in S_l$ so that
$S_l$ is integral over $\mathbb{Z}_l[x_1,\cdots, x_n].$ Let $I$ be a prime ideal in $S_l$ laying over $(x_1,\cdots, x_n)$
(such ideal exists since $\spec(S)\to \spec(S_l[x_1,\cdots, x_n])$ is surjective by the going-up theorem).
 So, $S_l/I=R$ is an integral domain finite over $\mathbb{Z}_l.$
Let $S'$ be the integral closure of $\mathbb{Z}$ in $R.$ Then $R=S'_l.$ 
Thus suffices to show that there exists a homomorphism $\phi_p:S'\to \mathbb{F}_p$ for infinitely many $p$. This is a consequence of the Chebotarev density theorem.

  We have that the image of the map $\spec(S)\to \spec \mathbb{Z}[c_1]$ contains a nonempty open subset.
If $c_1$ is algebraic, then all but finitely many prime ideals in $\mathbb{Z}[c_1]$ lift to $S.$
By the Chebotarev denisity theorem there are infinitely many primes $I\subset \mathbb{Z}[c_1]$
such that the image of $c_1$ in the quotient  $\mathbb{Z}[c_1]/I\cong F_q$ does not belong to $\mathbb{F}_p.$
Let $I'\in\spec(S)$ be a lift of $I.$
Now any homomorphism $S/I'\to \bar{F_p}$ lifting $\mathbb{Z}[c_1]/I\to F_q$ will do.
Finally, let $c_1$ be transcendental. Let $f\in \mathbb{Z}[c_1]$ be such that $\spec (\mathbb{Z}[c_1]_f)$
lifts to $\spec(S).$ Thus it suffices to show that there are infinitely many primes $p$ for which there exists
$t\in\mathbb{Z}[c_1]$ such that $f\notin (p, t)$ and $\mathbb{Z}[c_1]/(p, t)=F_q$ for $q>p.$ 
For this purpose we can take any $p$ that does not divide $f$, then take a nonlinear irreducible $\bar{t}\in F_p[c_1]$
that does not divide $f\mod p.$ Then let $t$ be any lift of $\bar{t}.$
\end{proof}

For the proof of Theorem \ref{Sn} we need to recall the definition of the $n$-th Calogero-Moser space. Consider the following
subscheme of pairs of $n$-by-$n$ matrices over $\mathbb{C}$

$$X=\lbrace (A, B)|\quad \text{rank}([A, B]+\text{Id}_n)=1\rbrace.$$
It is known that $PGL_n(\mathbb{C})$ acts freely on $X$ by conjugation, and the $n$-th Calogero Moser space,
denoted by $\text{CM}_n,$ is defined as the quotient 
$$X//PGL_n(\mathbb{C})=\text{CM}_n.$$
It is well-known that $\text{CM}_n$ is a smooth, affine variety over $\mathbb{C}$ \cite{W}.
In the following proof, we also need that the Calogero-Moser spaces are simply connected.
This follows from the fact that the $n$-th Calogero-Moser space is homeomorphic to
the Hilbert scheme of $n$-points on the plane which is known to be simply connected based on its cell decomposition.

\begin{proof}[Proof of Theorem \ref{Sn}]

If $c$ is rational then 
after a base change to a field $\bold{k}$ of characteristic $p,$ we have that \cite{BFG}
$$\spec(Z(B_{\bar{c}}))=(\mathfrak{h}\oplus \mathfrak{h}^*)/S_n.$$ 
  Hence using Remark \ref{lifting}, the  $p'$-etale fundamentale group
of the smooth locus of $\spec(Z(B_{\bar{c}}))$ is $S_n.$ For irrational $c,$ by Lemma
\ref{cheb} for any finitely generate subring $S\subset \mathbb{C},$ there are infinitely many primes $p$ and algebraically close fields $\bf{k}$ of characteristic $p$
with a base change $S\to\bold{k}$, such that $\bar{c}\notin \mathbb{F}_p.$ Then as explained in \cite{BFG}, we have 
$$\spec Z(B_{\bar{c}})\cong (\text{CM}_{n})_{\bold{k}}.$$
Since $(\text{CM}_{n})_{\bold{k}}$  admits a smooth
simply connected lift to characteristic 0 (namely $\text{CM}_n$), using Remark \ref{lifting} as before the desired assertion follows.

\end{proof}

\begin{remark}

Given a Cherednik algebra $H_c$ associated with an arbitrary pair $(W, \mathfrak{h})$, we expect that
its spherical subalgebra $B_c$ is a good quantization of $\mathfrak{h}\oplus\mathfrak{h}^*/W.$
Then Theorem \ref{HC} would imply that if $B_c=R^{\Gamma},$ where $B_c$ is simple
and $R$ is a domain, then $\Gamma$ must be a quotient of $W.$

\end{remark}

\section{The case of generalized Weyl algebras}

In this section we apply results of the previous one to
  noncommutative deformations
of the Kleinian singularities of type A (as introduced by Hodges \cite{H}), which are spherical subalgebras of rational Cherednik algebras
associated with the pair a cyclic group and its one dimensional representation. This extensively studied family of algebras is also known as (classical) generalized Weyl algebras. 
Let us recall their definition.

Let $v=\prod_{i=1}^n (h-t_i)\in \mathbb{C}[h].$
Then the algebra $A(v)$ is generated by $x, y, h$ subject to the relations
$$xy=v,\quad yx=v(h-1),\quad hx=x(h+1),\quad hy=y(h-1).$$
If $\deg(v)=1$, then $A(v)$ is isomorphic to the first Weyl algebra $A_1(\mathbb{C}).$
Recall also that if $v=\prod_{i=0}^{n-1}(h+\frac{i}{n}),$ then $A(v)$ can be identified with the fixed ring
of the Weyl algebra $A_1(\mathbb{C})$ under the natural action of the cyclic group of order $n.$
On the other hand, when $\deg(v)=2$ algebras $A(v)$ correspond to central quotients of the enveloping algebra $U(\mathfrak{sl}_2).$
We next recall that algebra $A(v)$ is simple if and only if roots of $v$ differ by non-integers.

It was observed by Smith \cite{S} that a countable family of primitive quotients
of $U(\mathfrak{sl}_2)$ can be realized as $\mathbb{Z}/2\mathbb{Z}$-fixed rings
of algebras of differential operators on singular algebraic curves $\spec(\mathbb{C}[x, z]/(z^2-x^m))$, where $m>1$ is odd. 
On the other hand, it follows from our earlier result \cite{T3} that the first Weyl algebra (in fact any $n$-th Weyl algebra) cannot be a nontrivial
fixed ring of a domain.
Naturally, one wonders which other generalized Weyl algebras can be realized as nontrivial fixed rings.
Some sufficient conditions for a generalized Weyl algebra to be a nontrivial fixed ring of another generalised Weyl algebra were obtained in \cite{JW}, \cite{GW}.

Our next result fully solves the inverse Galois problem for simple generalized Weyl algebras. To state it, we need to introduce a certain class of algebras
which incorporates generalized Weyl algebras, as well as rings of differential operators on singular affine curves 
$\spec(\mathbb{C}[z, x]/(z^l-x^m))$ with $m=1\mod l.$

Let $v\in\mathbb{C}[h]$ and $l, m$ be coprime natural numbers. Then we have a derivation (the Euler vector field) $D$ on the ring $\mathcal{O}=\mathbb{C}[x, z]/(z^l-x^m)$
defined as follows 
$$D(x)=x, D(z)=\frac{m}{l}z.$$
 Putting $[h, -]=D$ we can define the semi-direct product algebra
$A=\mathbb{C}[h]\ltimes\mathcal{O}.$ Put $y=x^{-1}v$ considered as an element of $A[x^{-1}]=\mathbb{C}[h]\ltimes\mathcal{O}[x^{-1}].$
Then the subalgebra generated by $A$ and $y$ is denoted by $A_v^{l, m}.$ Clearly, $A(v)$ is a subalgebra of $A_v^{l, m}.$
Moreover, $\mathbb{Z}/l\mathbb{Z}=\langle \sigma\rangle$ acts on $A_v^{l, m}$ by
$$\sigma(z)=\xi z, \sigma(x)=x, \sigma(h)=h,$$ where $\xi$ is a primitive $l$-th root
of unity.

\begin{theorem}\label{typeA}
Let $A(v)$ be simple with $\deg(v)=n.$ If $A(v)=R^{\Gamma}$ with $R$ a $\mathbb{C}$-domain and $\Gamma$ a finite group of $\mathbb{C}$-algebra automorphisms of $R,$
then $\Gamma$ must be a quotient of $\mathbb{Z}/n\mathbb{Z}.$ 
Let $\Gamma=\mathbb{Z}/l\mathbb{Z}, l|n.$ Then $A(v)=R^{\Gamma}$ for some domain $R$ if and only if the set of images of roots
of $v$ in $\mathbb{C}/\mathbb{Z}$ is closed under the shift by $1/l.$ In this case $R\cong A_v^{l, m}$ for some $m=1\mod l.$

\end{theorem}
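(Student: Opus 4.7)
The plan is to invoke Theorem \ref{HC}, reducing the problem to a computation of the étale fundamental group of the Azumaya locus of $\spec Z(A(v)_{\bold{k}})$ for large positive characteristic. First I would verify Assumption \ref{assumption} for $A(v)$: take $\delta = x$ and $V = \mathbb{C} + \mathbb{C} x$, so that the Dunkl-type embedding $A(v)[x^{-1}] \cong D(\mathbb{G}_m)$ places $V$ inside $\mathcal{O}(\mathbb{G}_m)$; local nilpotency of $\ad(x)$ on $A(v)$ is a direct check on the PBW basis $\{h^i x^j\} \cup \{h^i y^j\}$, where each application of $\ad(x)$ lowers the polynomial degree in $h$ by one. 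The algebra $A(v)$ is Noetherian and, for large $p$, finite over its center $Z(A(v)_{\bold{k}})$, which contains $x^p, y^p, h^p - h$. Since $A(v)$ is simple in characteristic zero, Theorem \ref{HC} applies and any finite group $\Gamma$ with $A(v) = R^{\Gamma}$ is a quotient of the étale fundamental group of the Azumaya locus of $\spec Z(A(v)_{\bold{k}})$.

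Next I compute $Z(A(v)_{\bold{k}})$. By induction on $k$ one verifies the identity $x^k y^k = \prod_{j=0}^{k-1} v(h - j)$ in $A(v)$; specializing $k = p$ and using the Artin--Schreier factorization $\prod_{j \in \mathbb{F}_p}(h - t - j) = (h^p - h) - (t^p - t)$ one obtains
\[
Z(A(v)_{\bold{k}}) = \bold{k}[X, Y, H]\bigm/\bigl(XY - F(H)\bigr), \qquad F(u) = \prod_{i=1}^{n}(u - c_i),\ c_i = t_i^p - t_i.
\]
The Azumaya locus is the smooth locus, and its complement consists of isolated Kleinian singularities at coincidences $c_i = c_j$. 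Simplicity of $A(v)$ gives $t_i - t_j \notin \mathbb{Z}$ in $\mathbb{C}$, so $c_i = c_j$ in $\bold{k}$ exactly when $t_i - t_j \in \mathbb{Q}/\mathbb{Z}$ with denominator coprime to $p$. Choosing such $p$ via Lemma \ref{cheb} and applying Remark \ref{lifting}, any $p'$-quotient of the étale $\pi_1$ is a quotient of the topological $\pi_1$ of the corresponding complex Kleinian surface; a local analysis of the $A_{r-1}$-structure shows that the cyclic quotients $\mathbb{Z}/l\mathbb{Z}$ that can arise are precisely those for which the images of the roots of $v$ in $\mathbb{C}/\mathbb{Z}$ are closed under shift by $1/l$. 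In particular $l \mid n$, yielding the first assertion.

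Finally, for the ``if and only if'', assume the shift-closure holds and factor $v(h) = \prod_{i=0}^{l-1} w(h - i/l)$ with $w \in \mathbb{C}[h]$ of degree $n/l$. For any $m$ coprime to $l$ with $m \equiv 1 \pmod l$, in the localization $A[x^{-1}] \cong D(\mathbb{G}_m)$ with $\tau^l = x$, $\tau^m = z$ and $[h,\tau] = \tau/l$, one verifies using $p(h)\tau^{-1} = \tau^{-1} p(h - 1/l)$ that
\[
\bigl(\tau^{-1} w(h)\bigr)^l = \tau^{-l} \prod_{i=0}^{l-1} w(h - i/l) = x^{-1} v(h) = y,
\]
so $A_v^{l,m}$ sits inside the domain $D(\mathbb{G}_m)$ and is itself a domain. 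Inspection of the $\mathbb{Z}/l\mathbb{Z}$-weight decomposition of the spanning set $\{h^j x^a z^b\} \cup \{h^j y^a z^b\}$ ($0 \le b < l$) under $\sigma(z) = \xi z$ gives $(A_v^{l,m})^{\mathbb{Z}/l\mathbb{Z}} = A(v)$. Conversely, from $A(v) = R^{\mathbb{Z}/l\mathbb{Z}}$ the proof of Theorem \ref{HC} produces a $\mathbb{Z}/l\mathbb{Z}$-Galois étale cover $Y \to \mathbb{G}_m$ of the Dunkl localization; this cover must be $t \mapsto t^l$ up to isomorphism, and lifting $y \in D(\mathbb{G}_m)$ to $Y$ forces the factorization of $v$ above and identifies $R \cong A_v^{l,m}$ for some $m \equiv 1 \pmod l$. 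The main obstacle I anticipate is the middle paragraph: correctly tracking the Artin--Schreier-twisted Kleinian structure and extracting precisely the cyclic $p'$-quotients in bijection with the shift-closure condition.
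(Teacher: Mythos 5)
Your middle step is where the argument breaks, and the problem is not only the technical difficulty you flag. First, Theorem \ref{HC} gives a \emph{necessary} condition only: $\Gamma$ is a quotient of the \'etale fundamental group of the Azumaya locus of $\spec Z(A(v)_{\bold{k}})$. No computation of that group can by itself produce the word ``precisely'', i.e.\ the only-if direction and the exact list of admissible $l$. Second, and more seriously, the claimed bijection is false: the coincidence $c_i=c_j$ happens exactly when $\bar t_i-\bar t_j\in\mathbb{F}_p$, so (for $p$ prime to the denominators) \emph{every} rational difference $t_i-t_j$ collapses, no matter what rational it is; the coincidence pattern, hence the fundamental group of the smooth locus of $XY=\prod_i(H-c_i)^{r_i}$ (which one can show is cyclic of order $\gcd(r_1,\dots,r_s)$, generated by a loop in $\{X\neq0\}\cong\mathbb{C}^*\times\mathbb{C}$), cannot detect closure under the shift by $1/l$. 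Concretely, take $v=h(h-\tfrac{2}{5})$: $A(v)$ is simple, and for $p\gg0$ the two Artin--Schreier images coincide, so the center is the $A_1$-singularity $XY=(H-c)^2$, whose Azumaya (= smooth) locus has $p'$-\'etale fundamental group $\mathbb{Z}/2\mathbb{Z}$. Your criterion would therefore admit $\Gamma=\mathbb{Z}/2\mathbb{Z}$, whereas the theorem excludes it, since $\{0,\tfrac{2}{5}\}\subset\mathbb{C}/\mathbb{Z}$ is not stable under the shift by $\tfrac12$. The paper closes this gap by a different mechanism: Harish--Chandra-ness plus [\cite{T3}, Theorem 1] applied to the Dunkl localization $A(v)[x^{-1}]\cong D(\mathbb{C}^*)$ shows $\Gamma$ is cyclic of some order $l$ and produces $z\in R$ with $z^l=x^m$, $m\equiv 1\pmod l$; simplicity of $A(v)$ (no nonzero finite-dimensional modules) forces $R$ to be generated by $A(v)$ and $z$, i.e.\ $R\cong A_v^{l,m}$; and then the explicit identity $z^{l-1}y^{m+1}z=x^{-1}v^{[m+1]}(h+\tfrac{m}{l})$ together with the $\mathbb{Z}$-grading forces a divisibility by $v$ that is exactly closure of the root images under the shift by $\tfrac{m}{l}$ (equivalently $\tfrac1l$), with $l\mid n$ because the roots are distinct modulo $\mathbb{Z}$. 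It is this algebraic step inside $A_v^{l,m}$, invisible to the characteristic-$p$ topology, that sees the arithmetic of the roots.

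Your converse also overclaims: $(A_v^{l,m})^{\sigma}=A(v)$ does \emph{not} hold for every $m\equiv1\pmod l$ once the shift condition holds. Take $l=2$, $m=1$, and $v$ with roots $\tfrac14$ and $\tfrac34+N$ for a large integer $N$: the shift condition holds, but the $\sigma$-invariant element $zy^{2}z\in A_v^{2,1}$ equals $x^{-1}$ times a polynomial whose roots differ from roots of $v$ by half-odd-integers of absolute value at most $\tfrac32$, while the two roots of $v$ are $\tfrac12+N$ apart; hence $v$ does not divide it, $zy^{2}z\notin A(v)$, and $(A_v^{2,1})^{\sigma}\supsetneq A(v)$. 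One must take $m$ large relative to the integer spreads of the roots, which is exactly how the paper argues (it proves $(A_v^{l,m})^{\sigma}=A(v)$ only for $m\gg0$) and why the theorem claims $R\cong A_v^{l,m}$ only for \emph{some} $m\equiv1\pmod l$. Finally, even the first assertion along your route would need more than a ``local analysis'': a global computation of the fundamental group of the punctured surface, a choice of reduction keeping all irrational differences $t_i-t_j$ outside $\mathbb{F}_p$ simultaneously, and verification of the hypotheses of Remark \ref{lifting} after resolving and compactifying --- none of which is needed in the paper's argument.
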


\begin{proof}

We know by Theorem \ref{HC} that $R$ must be a Harish-Chandra $A(v)$-bimodule.
Since $A(v)[x^{-1}]$ can be identified with $D(\mathbb{C}[x, x^{-1}]),$ it follows from [\cite{T3}, Theorem 1]
that there exists a an affine variety $Y$ and a $\Gamma$-Galois covering $$Y\to \spec \mathbb{C}[x, x^{-1}]=\mathbb{C}^*,$$
so that we have a $\Gamma$-equivariant isomorphism $R[x^{-1}]\cong D(Y).$
So, $Y\cong \mathbb{C}^*$ and $\Gamma$ must be a cyclic group. Let $l=|\Gamma|.$
Therefore, $R[x^{-1}]\cong D(\mathbb{C}[w, w^{-1}]),$
and $w^l=x$ with $\Gamma=\langle \sigma\rangle$ acting on $w$
by the multiplication by $\xi,$ where $\xi$ is a primitive $l$-th root of unity. A similar statement holds for $R[y^{-1}].$
We may write $w=x^{-k}z$ for some $k> 0, z\in R.$ So, $z^l=x^m$ for some $m>0$ so that $m=1\mod l$ and $\sigma(z)=\xi z.$
Denote by $B$ the subalgebra of $R$ generated by $A(v)$ and $z.$ We claim that $B=R.$
Indeed, $\Gamma$ acts faithfully on $B.$
 Therefore, $B[x^{-1}]=R[x^{-1}]$ and $B[y^{-1}]=R[y^{-1}].$ Put $M=R/B$. then $M$ is an $A(v)$-module such that
 $M[x^{-1}]=0$ and $M[y^{-1}]=0.$ This easily implies that $M$ must be a finite dimensional $A(v)$-module. Since
 $A(v)$ admits no nonzero finite dimensional modules (as it is simple), it follows that $M=0,$ hence $B=R.$
 Next, we may and will identify $R=B$ with $A_v^{l, m}$ inside the algebra 
 $$R[x^{-1}]=\mathbb{C}[h]\ltimes \mathbb{C}[z, x, x^{-1}]/(z^l-x^m).$$
Thus, it remains to establish when does the equality $(A_v^{l,m})^{\sigma}=A(v)$ hold. 
 

 Put for simplicity $A=A(v).$ So, $z^{l-1}Az\subset A.$ Recall that
$zhz^{-1}=h-\frac{m}{l}.$ It is well-known and easy to check that $x^my^m=v^{[m]}$, where $v^{[n]}=\prod_{i=0}^{n-1}v(h-i).$
 So, $y^n=x^{-n}v^{[n]}$. Hence $$z^{l-1}y^nz=x^{-n}z^l(z^{-1}v^{[n]}z)=x^{m-n}v^{[n]}(h+\frac{m}{l}).$$
 Take $n=m+1.$ Then $$z^{l-1}y^{m+1}z=x^{-1}v^{[m+1]}(h+\frac{m}{l}).$$
Now recall that we have a standard $\mathbb{Z}$-grading on $A[x^{-1}]$ defined as follows: $\deg(x)=1$ and $\deg(h)=0.$ 
  Since the element  $x^{-1}v^{[m+1]}(h+\frac{m}{2})$  has degree -1, it
 follows that this element must equal to $ya$ for some $a\in\mathbb{C}[h].$ Hence $v^{[m+1]}(h+\frac{m}{l})$ divides $v.$
Write $v=\prod_k (h-t_k).$ Thus, since roots of $v$ differ by non-integers, it follows that for each root $t_k$ there exists another
root $t_{k'},$ such that $$t_{k'}-t_k=\frac{m}{l}+i,\quad i\in \mathbb{Z}.$$ Hence the set of images of roots of $v$ in $\mathbb{C}/\mathbb{Z}$
is closed under the shift by $\frac{m}{l},$ as desired.

Now, we assume that the set of images of roots of $v$ in $\mathbb{C}/\mathbb{Z}$
is closed under the shift by $\frac{1}{l}$ and show that $(A_v^{l, m})^{\sigma}=A$ for $m\gg 0.$ 
For simplicity we assume that $l=2,$ the general case is similar.
Assume $m$ is given so that for
any root $t$ of $v,$ there exists another root $t'$ of $v,$ so that $t-t'=\frac{i}{2}$ for some odd $i$ with $|i|\leq m.$
We claim that $(A_v^{2, m})^{\sigma}=A.$ We need to show that $zA_{v}^{2, m}z\subset A$, for which it suffices to check
that $zy^kz\in A$ for all $k.$ Indeed, recall that  $$zy^{k}z=x^{-k+m}v^{[k]}(h+\frac{m}{2}).$$
Thus, we only need to consider the case when $k\geq m+1.$ It suffices to see that $v^{[k]}(h+\frac{m}{2})$ is a multiple of $v^{[k-m]}.$
Note that roots of $v^{[k-m]}$ are of the form $j+t$, where $t$ is a root of $v$ and $j<k-m.$ Write $t=t'+p/2$ with $h(t')=0$
and odd $p$ with $|p|\leq m.$ Then $$v^{[k]}(t+\frac{m}{2})=v^{[k]}(t'+(p+m)/2)=0,$$ and we are done. 
\end{proof}

\begin{remark}
We assume again that $A(v)$ is simple. It was observed by Hodges \cite{H} that $A(v')$ is Morita equivalent to $A(v)$
if roots of $v'$ are integer translates of roots of $v.$ Next we recall a result by Jordan and Wells \cite{JW} describing the fixed ring
of $A(v)$ under the natural diagonal action of a cyclic group. Namely, $\mathbb{Z}/l\mathbb{Z}$ acts diagonally
on $A(v)$ by $$\sigma(x)=\xi x,\quad \sigma(y)=\xi^{-1}y,\quad \sigma(h)=h,$$ where $\xi$ is a primitive $l$-th root of unity.
Then the corresponding fixed ring $A(v)^{\mathbb{Z}/l\mathbb{Z}}$ is isomomrphic to $A(v'),$
where $v'=\prod_{i=0}^{l-1}v(h+i/l).$ Now, Theorem \ref{typeA} can be reformulated as follows:
A simple ring $A(v)$ is a fixed ring of a domain $R$ under a finite subgroup of automorphisms $G$ if and only if $A(v)$ is Morita equivalent to $A(v')^{\mathbb{Z}/l\mathbb{Z}}$ for some
$v'$ (under the diagonal action of $\mathbb{Z}/l\mathbb{Z}$ on $A(v'))$ and $G\cong \mathbb{Z}/l\mathbb{Z}.$
\end{remark}

\section{The case of quantum tori}

Recall that given an associative algebra, its Picard group $\Pic(A)$ is defined as the group of isomorphism classes of invertible
$A$-bimodules under the tensor product. We have the natural homomorphism $\Out(A)\to \Pic(A),$ 
where $\Out(A)$ denotes the group of outer automorphisms of $A.$



Given $q\in\mathbb{C}^*$, the corresponding quantum torus, to be denoted by $A_q,$ is defined as a $\mathbb{C}$-algebra with generators $x, y$ and their inverses $x^{-1}, y^{-1}$ with the relation $xy=qyx.$ An $n$-dimensional quantum
torus is defines as follows: 
$$A^n_{q}=\bigotimes_{i=1}^nA_q.$$

 It is natural to ask whether  the natural homomorphism  $$\Out(A^n_q)\to\Pic(A_q^n)$$ is an isomorphism. This was proved to be the case
for a quantum torus by Berest, Ramadoss and Tang \cite{BRT}. Their proof
is based on the description of the isomorphism classes of ideals of $A_q$. We generalize this result for $n$-dimensinal quantum tori.
An upshot of our proof (already for $n=1$ case) is that it does not rely on any nontrivial facts about ideals in $A_q^n.$

In what follows, given an automorphism $\phi\in\Aut(A)$, by $A_{\phi}$ we denote $A$ viewed as a $A$-bimodule
with the usual left action and the right action twisted by $\phi.$
The next result provides a criterion for injectivity of the natural restriction homomorphism $\Pic(A)\to\Aut(Z)$, where
$A$ is an algebra finite over its center $Z$. It is (mostly) well-known,
we include the proof for the reader's convenience.

\begin{lemma}\label{Picard}
Let $\bold{k}$ be an algebraically closed field. Let $R$ be a $\bold{k}$-algebra which is finite over its center $Z$, such
that $Z$ is a finitely generated $\bold{k}$-domain. Let $U\subset \spec(Z)$ be the Azumaya locus of $R.$
Assume that $Z$ is normal and the compliment of $U$ in $\spec(Z)$ has codimension $\geq 2$. 
Moreover, assume that $R$ is a Cohen-Macaulay $Z$-module in codimenion 2.
Then the natural restriction homomorphism
$\Aut(R)\to \Aut(Z)$ extends to a homomorphism $\Pic(R)\to \Aut(Z)$, which is injective if the Picard group of $U$ is trivial.

\end{lemma}

\begin{proof}

Let $M$ be an invertible $R$-bimodule. It follows from a standard argument that the support
of $M$ on $U\times U$ must be a graph of an automorphism of
 $U.$ Since the codimension of the compliment of $U$ in $\spec Z$ is at least 2 and $Z$ is normal, we get that $\Aut(U)\leq \Aut(\spec(Z)).$ 
 Thus we obtain the desired homomorphism $\Pic(R)\to \Aut(Z).$

Now, we assume that the Picard group of $U$ is trivial. 
 If $M\in\Pic(R)$ maps to $\Id_Z$, then $M_U$ is supported on the diagonal of $U\times U.$
 Thus, $M$ is a module over $$R_U\otimes_{\mathcal{O}_U}R^{op}_U\cong \End_{\mathcal{O}_U}(R|_U).$$
  Hence, $M_{U}$ must be of the form $R_{U}\otimes_{\mathcal{O}_U}N$ where $N\in \Pic(U).$ Since the Picard group of
  $U$ is trivial, it follows that $M_U\cong R_U.$ Let $I$ denote the defining ideal of the compliment of $U$ in $\spec(R).$
  Since $M$ is a projective left $R$-module,
it follows from our assumption that $depth_IR, depth_IM\geq 2$ (here $R, M$ are viewed as $Z$-modules).
Now a standard argument using local cohomology shows that $\Gamma(U, M_U)=M$ and $\Gamma(U, R_U)=R.$ So, $M\cong  R$ and we are done.

\end{proof}

\begin{theorem}\label{picard}
Let $A^n_q$ be an $n$-dimensional quantum torus with $q$ not a root of unity. Then the natural map $\Out(A^n_q)\to \Pic(A^n_q)$ is an isomorphism.

\end{theorem}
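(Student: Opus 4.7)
The plan is to adapt the reduction-modulo-$p$ approach that underlies the proof of Theorem \ref{center} to the quantum-torus setting.  Injectivity of $\Aut(A^n_q)/\mathrm{Inn}(A^n_q)\to\Pic(A^n_q)$ is standard: if $(A^n_q)_\phi\cong A^n_q$ as bimodules, the image $u$ of $1$ is a unit satisfying $\phi(a)u=ua$, realizing $\phi$ as conjugation by $u$, hence inner.

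For surjectivity, fix $M\in\Pic(A^n_q)$, spread $M$ and $A^n_q$ over a finitely generated $S\subset\mathbb{C}$ containing $q^{\pm 1}$, and apply Lemma~\ref{cheb} to obtain infinitely many specializations $S\to\mathbb{F}_p$ sending $q$ to an element $\bar q$ of finite order $\ell=\ell(p)$ with $\ell(p)\to\infty$.  For such primes $A^n_{\bar q}$ is Azumaya over its center $Z_p=\mathbb{F}_p[x_i^{\pm\ell},y_i^{\pm\ell}]_{i=1}^n$, a Laurent polynomial ring with trivial Picard group, so the hypotheses of Lemma~\ref{Picard} are trivially satisfied and the lemma gives an injection $\Pic(A^n_{\bar q})\hookrightarrow\Aut(Z_p)$.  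The argument of Theorem~\ref{center} shows its image preserves the reduction-mod-$p$ Poisson bracket on $Z_p$.  A direct computation -- lifting $\bar q$ to $q\in S_{(p)}$, writing $q^\ell=1+pu$, and reducing $\frac{1}{p}[x_i^\ell,y_j^\ell]$ modulo $p$ -- yields the log-canonical symplectic bracket
$$
\{x_i^\ell,y_j^\ell\}=\ell u\,\delta_{ij}x_i^\ell y_j^\ell,\qquad \{x_i^\ell,x_j^\ell\}=\{y_i^\ell,y_j^\ell\}=0.
$$
A standard calculation for log-canonical Poisson tori identifies the Poisson automorphism group of $\spec Z_p$ with $\mathrm{Sp}_{2n}(\mathbb{Z})\ltimes(\mathbb{F}_p^*)^{2n}$, the first factor acting by integer-symplectic monomial change of coordinates and the second by coordinate rescaling.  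Passing to the $Z^\infty$-limit produces an injection
$$
Z^\infty:\Pic(A^n_q)\hookrightarrow \mathrm{Sp}_{2n}(\mathbb{Z})\ltimes(\mathbb{C}^*)^{2n},
$$
the image landing in the $\mathbb{C}$-points because $M$ itself is defined over $\mathbb{C}$.

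To finish, observe that $\Aut(A^n_q)$ contains an obvious copy of $\mathrm{Sp}_{2n}(\mathbb{Z})\ltimes(\mathbb{C}^*)^{2n}$: the torus factor via the diagonal rescalings $x_i\mapsto\lambda_i x_i,\;y_i\mapsto\mu_i y_i$, and $\mathrm{Sp}_{2n}(\mathbb{Z})$ via $q$-monomial substitutions on the generators (one verifies on the $SL_2(\mathbb{Z})$-generators $x\mapsto xy,\;y\mapsto y$ and $x\mapsto y^{-1},\;y\mapsto x$ that $xy=qyx$ is preserved, then tensors over the index $i$).  An explicit check shows this subgroup surjects under $Z^\infty$ onto all of $\mathrm{Sp}_{2n}(\mathbb{Z})\ltimes(\mathbb{C}^*)^{2n}$, so the composition $\Aut/\mathrm{Inn}\to\Pic\xrightarrow{Z^\infty}\mathrm{Sp}_{2n}(\mathbb{Z})\ltimes(\mathbb{C}^*)^{2n}$ is surjective.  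Combined with the injectivity of $Z^\infty$, this forces surjectivity of $\Aut(A^n_q)/\mathrm{Inn}(A^n_q)\to\Pic(A^n_q)$, completing the proof.

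The main technical obstacle I anticipate is the rigidity step: confirming that the image of $\Pic(A^n_q)$ in the $\mathbb{C}_\infty$-linear Poisson automorphism group is actually contained in the $\mathbb{C}$-points $\mathrm{Sp}_{2n}(\mathbb{Z})\ltimes(\mathbb{C}^*)^{2n}$ rather than in a genuinely $\mathbb{C}_\infty^*$-valued extension.  The integer symplectic component is unambiguous, but pinning down the scalar factors requires coherently lifting $\mathbb{F}_p^*$-data to $\mathbb{C}^*$-values as $p$ varies, and ruling out more exotic coherent sequences of rescalings.  This is the analogue of (and is expected to be no more delicate than) the corresponding step in Stafford's treatment of the Weyl algebra.
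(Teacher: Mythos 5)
Your reduction-mod-$p$ setup (Azumaya at roots of unity, center a Poisson torus with the log-canonical bracket, Lemma \ref{Picard} giving $\Pic(A^n_{\bar q})\hookrightarrow\Aut(Z_p)$, and the lift of integer-symplectic monomial maps and scalings to automorphisms of the quantum torus) matches the paper's, but the way you try to close the argument has a genuine gap -- in fact two. First, the injectivity of $Z^{\infty}$ on $\Pic(A^n_q)$ is asserted, not proved: per-prime injectivity of $\Pic(A_{\bold{k}})\to\Aut(Z_{\bold{k}})$ does not rule out a nontrivial invertible bimodule over $\mathbb{C}$ whose reduction becomes trivial (isomorphic to $A_{\bold{k}}$ as a bimodule, or at least has diagonal support) for every $p$; you also cannot quote Theorem \ref{center} here, since $A^n_q$ is not a good filtered quantization of a nonnegatively graded Poisson algebra in the paper's sense. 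Second, the step you yourself flag as an ``anticipated obstacle'' is exactly where the proof fails to close: a priori the limit of the mod-$p$ data lies in something like $Sp_{2n}(\mathbb{Z})\ltimes\bigl((F^{\infty})^{*}\bigr)^{2n}$ (and even the integer matrix could vary with $p$), and there is no given mechanism forcing the scalar components to lift coherently to $(\mathbb{C}^{*})^{2n}$, i.e.\ forcing the image of $\Pic$ to land inside the image of $\Aut(A^n_q)$. Without both points, the final deduction ``image of $\Pic$ is contained in image of $\Aut$, and $Z^{\infty}$ is injective, hence $\Aut\to\Pic$ is onto'' has no support.

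The paper sidesteps both difficulties by never lifting the automorphism data across primes. For each good reduction it uses the Poisson automorphism $\phi$ of $Z_{\bold{k}}$ cut out by the support of $M_{\bold{k}}$ only to produce some $\psi\in Sp(2n,\mathbb{Z})\leq\Aut(A_{\bold{k}})$ with $\psi|_{Z_{\bold{k}}}=\phi$, and then Lemma \ref{Picard} gives $M_{\bold{k}}\cong (A_{\bold{k}})_{\psi}$, whose only role is the weaker conclusion that $M_{\bold{k}}\cong A_{\bold{k}}$ as a \emph{left} module. The characteristic-zero statement is then recovered not for the automorphism but for left-module freeness: realize $M$ as a left ideal of $A^n_q$; if it were not principal, $\Gr(M)$ would not be principal, hence $\Gr(M_{\bold{k}})$ would be non-principal for $p\gg 0$, contradicting $M_{\bold{k}}\cong A_{\bold{k}}$. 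Once $M\cong A^n_q$ as a left module, the bimodule structure is a twist by an automorphism, giving surjectivity (injectivity being the standard inner-automorphism argument you gave). If you want to salvage your route, you would need to supply precisely this kind of descent-of-triviality argument anyway, so the detour through a coherent identification of $Z^{\infty}(\Pic)$ with $Sp_{2n}(\mathbb{Z})\ltimes(\mathbb{C}^{*})^{2n}$ buys nothing and is the hardest (and unproven) part of your plan.
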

\begin{proof}
We put $A=A_q$ for simplicity.
Let $M$ be an invertible $A$-bimodule. We need to show that $M\cong A$ as a left $A$-module.
Let $S\subset\mathbb{C}$ be a finitely generated ring over which $A, M$ are defined and $M$ is still an invertible
bimodule over $S.$ We show that $M_{\bold{k}}\cong A_{\bold{k}}$ as left modules for all base changes
$S\to\bold{k},$ where $\bold{k}$ is a finite field and
$char(\bold{k})=p\gg 0.$
Let $\bold{k}$ be a finite field, so $\bar{q}$ (the image of $q$ in $\bold{k}$) is an $l$-root of unity, for some $l.$
Then it is well-known that the center of $A_{\bold{k}}$, which we denote by $Z_{\bold{k}},$ is isomophic to the ring of Laurent polynomials:
$$Z_{\bold{k}}=\bold{k}[x_1^{\pm l}, \cdots, x_n^{\pm l}, y_1^{\pm l}, \cdots, y_n^{\pm l}].$$ 
The corresponding Poisson bracket on $Z_{\bold{k}}$ is given as follows:
$$\lbrace x_i^l, y_j^l\rbrace =\delta_{ij}lx_i^ly_j^l,\quad \lbrace x_i^l, x^l_j\rbrace=\lbrace y^l_i, y^l_j\rbrace=0.$$
Since $M_{\bold{k}}$ is an invertible bimodule, it follows from a standard argument that its support on $\spec(Z_{\bold{k}}\otimes_{\bold{k}} Z_{\bold{k}})$ must be a graph of an automorphism
 $\phi\in \Aut(Z_{\bold{k}}).$ Moreover $\phi$ must preserve the Poisson bracket on $Z_{\bold{k}}.$
Next we check that there exists $\psi\in \Aut(A_{\bold{k}})$
such that $\psi|_{Z_{\bold{k}}}=\phi.$ 
Recall that by $P\Aut(Z_{\bold{k}})$ we denote the group of automorphisms of $Z_{\bold{k}}$ preserving the Poisson bracket.
It is well-known and easy to prove that $P\Aut(Z_{\bold{k}})\cong Sp(2n, \mathbb{Z})$. As $Sp(2n, \mathbb{Z})$ also acts
on $A$ by automorphisms, we can now easily conclude that there exists $$\psi\in Sp(2n, \mathbb{Z})\leq \Aut(A_{\bold{k}}),$$ 
such that $\psi|_{Z_{\bold{k}}}=\phi.$ 
Then $(A_{\bold{k}})_{\psi}$ and $M_{\bold{k}}$ have the same support. Now, since the Picard group of $\spec(Z_{\bold{k}})$ is trivial, it follows from 
Lemma \ref{Picard}
that
the restriction homomorphism $$\Pic(A_{\bold{k}})\to\Aut(Z_{\bold{k}})$$ is injective, it follows that $M_{\bold{k}}\cong (A_{\bold{k}})_{\psi}$.
Hence $M\cong A_{\bold{k}}$ as left modules.

Since $M\in \Pic(A)$, we may assume that $M$ is a left ideal in $A$ (as a left module) (see for example [\cite{BEG}, Lemma 3]) . We need to show that it is a principal ideal.
Assume that this is not the case. Then $\Gr(M)$ is not a principal ideal in $\Gr(A)$. Then for any $p\gg 0,$ there exists
a base change $S\to \bold{k}$ with $char(\bold{k})=p$, such that $\Gr(M_{\bold{k}})$ is
not principal in $\Gr(A_{\bold{k}})$, so $M_{\bold{k}}$ is not isomorphic to $A_{\bold{k}}$, a contradiction.
\end{proof}

The next result solves the inverse Galois problem for quantum tori.

\begin{theorem}\label{FixedTorus}
Let $q\in\mathbb{C}^*$ be a non-root of unity. Let $A_q=R^G$, where $R$ is a $\mathbb{C}$-domain and $G$ is a finite subgroup of $\mathbb{C}$-automorphisms of
$R.$ Then $R\cong A_{q'}$ for some $q'\in\mathbb{C}^*.$
\end{theorem}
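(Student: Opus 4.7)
My plan follows the reduction-mod-$p$ methodology developed in the paper, especially mirroring the arguments of Theorems \ref{HC} and \ref{picard}. Choose a finitely generated subring $S\subset\mathbb{C}$ over which $A_q$, $R$, and the $G$-action are defined, and pass to an algebraically closed field $\bold{k}$ of characteristic $p\gg 0$ via a base change $S\to\bold{k}$ such that $\bar{q}$ is a primitive $l$-th root of unity with $\gcd(l\cdot|G|,p)=1$. In this setting $A_{q,\bold{k}}$ is Azumaya of PI-degree $l$ over its center $\bold{k}[x^{\pm l},y^{\pm l}]$, whose spectrum is the $2$-torus $T\cong\mathbb{G}_m^2$.

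The Galois-descent argument used in the proof of Theorem \ref{HC} applies essentially verbatim: $R_{\bold{k}}$ is a finite projective $A_{q,\bold{k}}$-module, $A_{q,\bold{k}}$ is Morita equivalent to $\bold{k}[G]\ltimes R_{\bold{k}}$, and $R_{\bold{k}}$ is finite over its center $Z(R_{\bold{k}})$ with $Z(R_{\bold{k}})^G=\mathcal{O}(T)$. Because $x$ and $y$ are units, $A_{q,\bold{k}}$ is Azumaya on all of $T$, so $\spec Z(R_{\bold{k}})\to T$ is a $G$-Galois etale cover with no complement to worry about. Since the prime-to-$p$ etale fundamental group of $\mathbb{G}_m^2$ is abelian, $G$ must be abelian and the cover is an isogeny of 2-tori; thus $Z(R_{\bold{k}})\cong\bold{k}[X_0^{\pm 1},Y_0^{\pm 1}]$ for suitable torus coordinates.

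Lifting $X_0, Y_0$ to units $X, Y \in R_{\bold{k}}$ (uniquely up to central units), positive powers $X^N, Y^M$ are central, so the commutator $XYX^{-1}Y^{-1}$ is a central element whose $N$-th power equals $1$. Hence it is a root of unity $\zeta \in \bold{k}^*$, giving $R_{\bold{k}}\cong A_{\zeta,\bold{k}}$ as a quantum torus over $\bold{k}$.

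The final step --- lifting this quantum-torus presentation from characteristic $p$ to characteristic $0$ --- is the main obstacle. My strategy is to exploit the natural $(\mathbb{C}^*)^2$-action on $A_q$ by scaling $x, y$. After passing to a suitable finite cover $\tilde{\mathbb{T}}\to(\mathbb{C}^*)^2$, this action should lift to $R$ compatibly with $G$, endowing $R$ with a grading by a lattice $L\supset\mathbb{Z}^2$ of index $|G|$ whose homogeneous components are one-dimensional (as read off from the mod-$p$ Azumaya description together with rank considerations). Generators $X, Y \in R$ in a basis of $L$ will then satisfy $XY = q'YX$ for some $q' \in \mathbb{C}^*$, yielding $R \cong A_{q'}$. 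The hard part is controlling the extension-theoretic obstruction to lifting the torus action (an $\mathrm{Ext}^1$ problem for $(\mathbb{C}^*)^2$ by the finite abelian group $G$, which should be resolved by an appropriate isogeny) and then assembling the individual mod-$p$ roots of unity $\zeta$ into a single characteristic-zero parameter $q'$; this is where most of the technical effort will concentrate.
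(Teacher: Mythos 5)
Your mod-$p$ analysis runs parallel to the paper's, but the proposal has a genuine gap exactly where you admit it does: the passage from ``$R_{\bold{k}}$ is a quantum torus for infinitely many $p$'' back to characteristic $0$ is not an afterthought to be handled by lifting a $(\mathbb{C}^*)^2$-action through an isogeny; it is the heart of the theorem, and your sketch (an unspecified $\mathrm{Ext}^1$ obstruction, a grading by a lattice $L\supset\mathbb{Z}^2$ ``read off'' from the mod-$p$ picture, and an unexplained assembly of the various roots of unity $\zeta$ into one parameter $q'$) does not constitute an argument. The paper's route is different and entirely concrete: it first proves that $R$ is a Harish--Chandra $A_q$-bimodule in characteristic zero, i.e.\ that $\Ad(x)$ and $\Ad(y)$ act semisimply on $R$ with eigenvalues among $q^{j/k}$, $k=|G|!$, by bounding, for infinitely many base changes chosen via Lemma \ref{Perruca} so that $\bar q$ has order coprime to $|G|$, the degrees of elements of $R_{\bold{k}}$ written in explicit generators $f_i,g_i$ of each component $e_iR_{\bold{k}}\cong e_i\bold{k}\langle f_i,g_i\rangle$ of $A_{\bold{k}}\otimes_{Z(A_{\bold{k}})}Z(R_{\bold{k}})$. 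Once this weight decomposition of $R$ is available, the rest of the proof is purely characteristic-zero algebra: reduce to $G$ simple, use the absence of nontrivial idempotents in $R_{\bold{k}}$ (itself proved via gradings coming from the characteristic-zero weight lattice, not assumed) to see the cover is connected and hence $G\cong\mathbb{Z}/l$ with $l$ prime, show any $G$-invariant ring strictly between $A_q$ and $R$ equals $R$, pick a common eigenvector $z$ with $\sigma(z)=\xi z$ so that $R=\bigoplus_{i<l}A_qz^i$ and $z^l=x^n$ after a change of the standard generators, and finally set $t=z^ax^{-b}$ with $an=bl+1$ to get generators $t,y$ of $R$ satisfying $yty^{-1}=q't$.

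Two further points where your mod-$p$ step itself is too quick. First, you assert that $\spec Z(R_{\bold{k}})\to\mathbb{G}_m^2$ is connected (hence that $G$ is abelian and the cover an isogeny); a priori $\spec Z(R_{\bold{k}})$ is a disjoint union of tori, and ruling out idempotents in $R_{\bold{k}}$ is a nontrivial step in the paper that uses the characteristic-zero Harish--Chandra structure you have not established. Second, the claim that central units $X_0,Y_0$ lift to units of $R_{\bold{k}}$ whose commutator is a root of unity is unjustified as stated; the paper instead produces such elements explicitly from the isogeny matrices, using that their determinants divide $|G|$ and are therefore coprime to the order $l$ of $\bar q$ --- which is precisely why Perucca's lemma is invoked. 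Without these ingredients, and without a genuine descent argument to characteristic zero, the proposal does not prove the theorem.
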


Just as for the analogous result for spherical subalgebras of rational Cherednik algebras, the proof of Theorem \ref{FixedTorus}  crucially utilizes
 Harish-Chandra bimodules over $A_q$. We recall their definition next.
In what follows, given $a\in A^*$ and an $A$-bimodule $M,$ we denote by $\Ad(a)\in \End(M)$ the conjugation by $a.$

\begin{defin}

Let $M$ be a $A_q$-bimodule. Then $M$ is a Harish-Chandra bimodule if $Ad(x), Ad(y)$ act semi-simply on $M.$

\end{defin}

We also need the following result from algebraic number theory, which is a special case of theorem of A. Perucca \cite{P}.

\begin{lemma}\label{Perruca}[\cite{EW}, Corollary A.2]
Let $S\subset\mathbb{C}$ be a finitely generated ring. Let $0\neq q\in S, d\in\mathbb{N}$. Then there exist
infinitely many base changes $\chi:S\to \bold{k}$, where $\bold{k}$ is a finite field and
$\chi(q)$ is a root of unity of order coprime to $d.$

\end{lemma}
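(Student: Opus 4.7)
The plan is to reduce to Perucca's theorem for number fields, following the strategy of the cited appendix. First, pass to a quotient $S/\mathfrak{p}$ by a minimal prime $\mathfrak{p}$ with $q \notin \mathfrak{p}$ (such exists because $q \neq 0$ and the intersection of minimal primes is the nilradical), so that without loss of generality $S$ is a domain; any specialization of $S/\mathfrak{p}$ to a finite field lifts to one of $S$. Consider then the subring $T = \mathbb{Z}[q] \subseteq S$: when $q$ is transcendental over $\mathbb{Q}$, $T \cong \mathbb{Z}[t]$; when $q$ is algebraic, $T$ is an order in a number field $K \subset \mathbb{C}$. By Chevalley's constructibility theorem, the image of the dominant morphism $\mathrm{Spec}(S) \to \mathrm{Spec}(T)$ contains a dense open set $V = \mathrm{Spec}(T_g)$ for some nonzero $g \in T$.

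The key reduction is that it suffices to produce infinitely many maximal ideals $\mathfrak{m} \in V$ such that $q \pmod{\mathfrak{m}}$ has multiplicative order coprime to $d$ in the finite residue field $T/\mathfrak{m}$. Indeed, each such $\mathfrak{m}$ lifts to some prime $\mathfrak{P} \subset S$ above it, and $S/\mathfrak{P}$ is a finitely generated algebra over the finite field $T/\mathfrak{m}$, hence admits maximal ideals $\mathfrak{M}/\mathfrak{P}$ with finite residue field $\bold{k}$ by the Nullstellensatz. The resulting $\chi : S \twoheadrightarrow \bold{k}$ sends $q$ to an element of the same multiplicative order as $q \pmod{\mathfrak{m}}$, since multiplicative order is preserved under the field extension $T/\mathfrak{m} \hookrightarrow \bold{k}$.

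To produce the $\mathfrak{m}$, I would split into two cases. In the transcendental case $T = \mathbb{Z}[t]$, I would choose a rational prime $p$ and positive integer $n$ with $\gcd(p^n - 1, d) = 1$, so that every nonzero element of $\mathbb{F}_{p^n}$ automatically has order coprime to $d$; infinitely many such pairs $(p, n)$ exist by Dirichlet's theorem on primes in arithmetic progressions (for odd $d$, take $p \equiv 2 \pmod{d}$ and $n = 1$; for even $d$, take $p = 2$ and $n$ a large prime not dividing the order of $2$ modulo any odd $\ell \mid d$). I would then pick $\alpha \in \mathbb{F}_{p^n}^*$ with $g(\alpha) \neq 0$ (only $\leq \deg g$ values of $\alpha$ are excluded) and set $\mathfrak{m} = (p, f)$, where $f \in \mathbb{Z}[t]$ lifts the minimal polynomial of $\alpha$ over $\mathbb{F}_p$. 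In the algebraic case, the existence of infinitely many primes $\mathfrak{m}$ of $T$ with $q \pmod{\mathfrak{m}}$ of order coprime to $d$ is precisely the content of Perucca's theorem in the form of [\cite{EW}, Corollary A.2], obtained by applying Chebotarev density to the compositum of the Kummer extensions $K(\zeta_\ell, q^{1/\ell})/K$ over primes $\ell \mid d$, combined with inclusion-exclusion over these primes.

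The main obstacle is the algebraic case, which rests on Perucca's Chebotarev-theoretic argument. The delicate point there is the prime $\ell = 2$: since $\mathbb{Q}(\zeta_2) = \mathbb{Q}$ and $\mathbb{Q}(\sqrt{\alpha})/\mathbb{Q}$ can behave anomalously (for instance when $\alpha$ is a $-4$-th power, per the classical results of Schinzel), the analysis of Frobenius in the corresponding Kummer tower needs to be modified; this subtlety is handled in detail in the cited reference.
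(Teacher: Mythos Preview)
The paper does not prove this lemma; it is quoted from [\cite{EW}, Corollary A.2] without argument, so there is no in-paper proof to compare against. Your reduction---passing to $T=\mathbb{Z}[q]\subset S$, using Chevalley to find a basic open $\spec(T_g)$ inside the image of $\spec S\to\spec T$, and then lifting suitable closed points of $T$ back to $S$---is the standard one and is correct. (The preliminary passage to a minimal-prime quotient is harmless but unnecessary, since $S\subset\mathbb{C}$ is already a domain.) The algebraic case is rightly deferred to Perucca's density result, and your remark about the anomaly at the prime $\ell=2$ is accurate.

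There is, however, a small but genuine gap in your transcendental case when $d$ is even. You enforce $\gcd(p^{n}-1,d)=1$, which for even $d$ forces $p=2$; but if $2\in S^{*}$ (for instance $S=\mathbb{Z}[\tfrac{1}{2},q]$), then the image of $\spec S\to\spec T$ is contained in $D(2)$, so any basic open $\spec(T_g)$ inside it has $2\mid g$, and no closed point of residue characteristic $2$ lies in $\spec(T_g)$. Your construction then produces nothing. The requirement $\gcd(p^{n}-1,d)=1$ is stronger than necessary: it suffices that \emph{some} $\alpha\in\overline{\mathbb{F}_p}^{\,*}$ have multiplicative order coprime to $d$ with $\bar g(\alpha)\neq 0$. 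For any prime $p$ not dividing the content of $g$ there are infinitely many such $\alpha$ (primitive $m$-th roots of unity with $\gcd(m,pd)=1$), yielding closed points in $\spec(T_g)$ of arbitrarily large residue characteristic---which, incidentally, is what the application in the proof of Theorem~\ref{FixedTorus} actually requires.
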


\begin{proof}[Proof of Theorem \ref{FixedTorus}]
Once again, we put $A_q=A.$
The proof goes along the lines of the proof of Theorem \ref{HC}.
We may assume that $G$ is a simple group. 
The first step is to show that $R$ is a Harish-Chandra $A$-bimodule.
Let $v\in A$, since $R$ is a finite left $A$-module, we may write
$$v^n=\sum_{i<n} h_iv^i, \quad h_i\in A.$$
Let $m$ be the largest degree of $y, y^{-1}$ in $h_i.$ Let $k=|G|!.$
Then we show that $$\prod_{|i|\leq mk}(\Ad(x)-q^{i/k})v=0,$$
which implies that $v$ is a sum of eigenvectors of $\Ad(x).$ Repeating the same argument for $\Ad(y)$ gives that $R$ is a Harish-Chandra bimodule.

Let $S\subset \mathbb{C}$ be a large enough
finitely generated ring over which everything is defined containing $|G|$-th primitive roots of unity and $q^{1/k}\in S, k\leq m|G|.$
Localizing $S$ if necessary and using Lemma \ref{Perruca}, it suffices to show the above equality holds in any base change $S\to\bold{k},$ where $\bold{k}$ is a finite field of large enough characteristic,
such that $\bar{q}$ (the image of $q$ in $\bold{k}$) is a root of unity of order coprime to $|G|.$

Indeed, we know that (just as in the proof of Theorem \ref{HC}) given a base change $S\to\bold{k}$ such that $\bar{q}$ (the image of $q$ in $\bold{k}$) is a root of unity,
then $A_{\bold{k}}$ is an Azumaya algebra, the restriction map $\spec(Z(R_{\bold{k}}))\to \spec (Z(A_{\bold{k}}))$ is a $G$-Galois etale covering, and  
$$R_{\bold{k}}\cong A_{\bold{k}}\otimes_{Z(A_{\bold{k}})}Z(R_{\bold{k}}).$$ 
But 
$\spec Z(A_{\bold{k}})\cong \mathbb{A}^*_{\bold{k}}\times \mathbb{A}^*_{\bold{k}},$
and it is well-known that any connected $p'$-etale covering of $\mathbb{A}^*_{\bold{k}}\times \mathbb{A}^*_{\bold{k}}$
must be isomorphic to $\mathbb{A}^*_{\bold{k}}\times \mathbb{A}^*_{\bold{k}}.$
 Therefore, we can conclude that $$\spec(Z(R_{\bold{k}}))=\bigsqcup_i X_i,$$
  where $X_i\cong \mathbb{A}^*_{\bold{k}}\times \mathbb{A}^*_{\bold{k}}$
and the etale coverings $X_i\to \mathbb{A}^*_{\bold{k}}\times \mathbb{A}^*_{\bold{k}}$ are given  in terms of coordinates by $(z, w)\to (z^{a_i}w^{b_i}, w^{c_i}).$
Let $A_i=\begin{pmatrix}
a_i & b_i &\\
0 & c_i
\end{pmatrix}$
be viewed as homomorphisms $A_i:\mathbb{Z}^2\to\mathbb{Z}^2.$ Then $\mathbb{Z}^2/Im(A_i)$ is isomorphic to the etale Galois group of the above covering.
In particular, $\det(A_i)$ must divide $|G|.$  Also, without loss generality, absolute values of all entries of $A_i$ are bounded by $|G|.$
 
  So, we have 
$$Z(R_{\bold{k}})\cong \prod_ie_i\bold{k}[z_i, w_i, z_i^{-1}, w_i^{-1}],$$ where 
$e_i$ are pairwise orthogonal idempotents $\sum_i e_i=1,$ and 
$$z_i^{a_i}w_i^{b_i}=x_i^l, \quad w_i^{c_i}=y_i^l,$$
where $x_i=e_ix, y_i=e_iy,$
with $a_ic_i$ dividing $|G|.$ Next, we show that $R_{\bold{k}}$ must be a product of quantum tori, and hence cannot contain any nontrivial nilpotent element.
Write $$1=t_ia_ic_i+t_i'l, t_i, t'_i\in\mathbb{Z}.$$
Now put $$f_i=x_i^{tc_i}y_i^{-tb_i}z_i^{t'_i},\quad g_i=y_i^{t_ia_i}w_i^{t_i'}.$$
 Then 
$$z_i=f_i^l,\quad w_i=g_i^l.$$ Now it follows that $f_i, g_i$ generate $e_iR_{\bold{k}}$, so
$$R_{\bold{k}}\cong \prod_ie_i\bold{k}\langle f_i, g_i\rangle.$$
In particular, $R_{\bold{k}}$ has no nonzero nilpotent element.
We have that $\Ad(x)(g_i)=\bar{q}^{1/c_i}.$
Next, we claim that the degree of $g_i, g_i^{-1}$ in $e_i\bar{v}$ is at most $n|G|.$
Indeed, we have $e_i\bar{v}^n=\sum e_ia_i\bar{v}^i$ in $\bold{k}\langle f_i, g_i\rangle,$
and the degree of $e_ia_i$ in $x, x^{-1}$ is at most $n,$ hence its degree is at most $n|G|$ in $f_i, f_i^{-1}$
which implies the desired result. Now it follows that for all $i$, we have
$$\prod_{j<n|G|}(\Ad(x)-\bar{q}^{j/c_i})(e_i\bar{v})=0.$$
Since $\sum_ie_iv=v$ and $c_i||G|,$ we get that 
$$\prod_{j\leq n|G|^2}(\Ad(x)-\bar{q}^{j/|G|})(\bar{v})=0.$$
This completes the proof that $R$ a Harish-Chandra bimodule over $A.$

Next, we claim that $R_{\bold{k}}$ has no nontrivial idempotents. Indeed, let $e\in R_{\bold{k}}$ be an idempotent.
Since $R_{\bold{k}}$ has no nonzero nilpotent elements then
given any $\mathbb{Z}$-grading of $R_{\bold{k}}$, then $e$ must have degree 0.
Let $X\subset \mathbb{C}^*$ denote the subgroup of weights of $R$ with respect to $Ad(x).$ 
Clearly $X$ is finitely generated.
Then any homomorphism $\chi:X\to \mathbb{Z}$
induces a $\mathbb{Z}$-grading on $R$. Thus, by varying  characters $\chi\in Hom(X, \mathbb{Z})$ 
and considering the corresponding $\mathbb{Z}$-gradings on $R_{\bold{k}}$ (so $\deg R^{\alpha}=\chi(\alpha)$) (and replacing $\Ad(x)$ by $\Ad(y)$) we may 
conclude that $e$ must belong to
$R'_{\bold{k}}$, where $R'$ is a subring of $R$ spanned by eigenvectors of $Ad(x), Ad(y)$ whose eigenvalues are roots of unity.
Denote by $R^0$ the centralizer of $x$ and $y$ in $R.$
 Clearly $R^0$ is $\Gamma$-invariant and $R'\cap A=R'^{\Gamma}=Z(A)=S.$ Hence $R^0$ is a finite $S$-algebra. Since $R$ is an integral domain, it follows
 that $R^0=S.$ 
Let $z\in R'$ be a common eigenvector of $Ad(x), Ad(y).$ So, $z^m$ commutes with $x, y$ for some $m\geq 1$. Hence $z^m\in S,$ which implies that
$z\in S$. Since $R'$ is spanned over $S$ with such elements, we conclude that  
 $R'=S.$
It follows that $e \in\bold{k}.$ Thus $G$ must be a quotient
of the etale fundamental group of $\spec(Z(A_{\bold{k}}))\cong \mathbb{A}^{*}_{\bold{k}}\times\mathbb{A}^*_{\bold{k}}$, so $G=\langle \sigma\rangle$ a cyclic group of a prime order $l.$

Next, we remark that any proper $G$-invariant subring $B\subset R$ strictly containing $A$ must equal to $R.$
Indeed, since $B^{G}=A$, we may argue just as we did above for $R$ and conclude that $Z(B_{\bold{k}})$ is a subring of $Z(R_{\bold{k}}), B_{\bold{k}}\cong A_{\bar{q}}\otimes_{Z(A_{\bar{q}})}Z(R_{\bold{k}})$ 
and $\spec(Z(B_{\bold{k}}))\to \spec Z(A_{\bar{q}})$
is a $G$-Galois etale covering. Thus, $Z(B_{\bold{k}})=Z(R_{\bold{k}})$ and hence $B_{\bold{k}}=R_{\bold{k}}$. So, $B=R$ as desired.

Let $\xi$ be a primitive $l$-th root of unity. Let $z\in R$ be a common eigenvector of $Ad(x), Ad(y),$ with eigenvalues $\alpha, \beta,$ respectively, such that $\sigma(z)=\xi z.$ 
Then $$A\neq \bigoplus_{i=0}^{l-1}Az^i$$ is a $\Gamma$-invariant subring of $R,$ thus it must equal $R.$ We have that $z^l=x^ny^m$
for some $n, m\in\mathbb{Z}.$
 As $x$ (or $y$) can be replaced by any $x^ay^b$ with $(a, b)=1,$
we may assume without loss of generality that $z^l=x^n.$ 
So, $xzx^{-1}=\eta z,$ where  $\eta^l=1.$
Also, $(l, n)=1$ as $R$ is a domain.
Let $an=bl+1$ for $a, b\in\mathbb{Z}.$ Put $t=z^ax^{-b}.$ Then $t^l=cx, t^n=dz$ for some $c, d\in\mathbb{C}^*.$
Therefore, $R$ is generated by $t, y$ and  $yty^{-1}=q't$ for some  $q'\in\mathbb{C}^*.$ Hence $R$ is a quantum torus, as desired.

\end{proof}

\begin{acknowledgement}
I am very grateful to the anonymous referee for making numerous helpful suggestions.
\end{acknowledgement}



\end{document}